\title{}
\par\setlength{\parindent}{#3}
\leftmargin  \@setpar{{\@@par}}%
\tikzset{join/.code=\tikzset{after node path={%
\ifx\tikzchainprevious\pgfutil@empty\else(\tikzchainprevious)%
edge[every join]#1(\tikzchaincurrent)\fi}}}
\tikzset{>=stealth',every on chain/.append style={join},
         every join/.style={->}}
\newtheorem{theorem}{Theorem}[section]
\newtheorem{lemma}[theorem]{Lemma}
\theoremstyle{definition}
\newtheorem{Definition}[theorem]{Definition}
\newtheorem{proposition}[theorem]{Proposition}
\title{Strong Shape Theory of Continuous Maps  {\footnote{The research is partially supported by the institutional scientific research project of Batumi Shota Rustaveli State University (ISRP-2019). The paper is completed and written during A. Beridze's visit at the University of California, Santa Barbara (UCSB) from 15.09.2019 to 15.12.2019.}} }
\date{\vspace{-5ex}}
\author{V.Baladze, A. Beridze, R.Tsinaridze}
\affil{Department of Mathematics\\Batumi Shota Rustaveli  State University}
\numberwithin{equation}{section}
\begin{document}
\maketitle
\begin{abstract}
The work is motivated by the papers \cite{1}, \cite{2}, \cite{7}, \cite{11}, \cite{13} and \cite{14}. In particular, the strong homology groups of continuous maps were defined and studied in \cite{13} and \cite{14}. To show that the given groups are a homology  type functor, it was required to construct a corresponding shape category. In this paper, we study this very problem. In particular, using the methods developed in \cite{7}, \cite{19}, the strong shape theory of continuous maps of compact metric spaces,  the so-called strong fiber shape theory is constructed.
\end{abstract}
%\selectlanguage{english} %%% remove comment delimiter ('%') and select language if required
\section*{Introduction}
\

The idea of the expansion of a map into the inverse or direct system consisting of “good” maps has been successfully used by various mathematicians to solve various problems of general topology, geometric topology and algebraic topology \cite{7}, \cite{12}, \cite{15}, \cite{16}, \cite{17}, \cite{18}, \cite{19}, \cite{22}. Using the idea of the papers (\cite{1} -\cite{11}), \cite{15}, \cite{16}, \cite{17}, \cite{18} continuous maps are investigated from the point of view of homology and homotopy theories. Applying  the (co)shape properties of continuous maps, functors from the category of maps of topological spaces to the category of long exact sequences of groups were considered by V.Baladze \cite{11}. On the other hand, the projective and the strong homology groups of continuous maps of compact metric spaces were defined in the papers \cite{24}, \cite{13}. The connection between the spectral and strong homology groups of maps was studied in the paper \cite{14}. Our further purpose is axiomatic characterization of it, without relative strong homology groups, in the sense of Hu \cite{hu}. In this case, the theory of inverse systems plays an important role. Consequently, the main aim of this paper is to develop fiber strong shape theory of continuous maps.   

We construct the fiber strong shape classification of maps using the methods of inverse system theory. It consists of approximation of maps by maps of $\mathbf{ANR}$-spaces. There exist another approaches to fiber strong shape theory, which are analogous to the technique considered in \cite{n1}, \cite{n2}, \cite{n3}, \cite{n4}, \cite{n5}, \cite{n6}, \cite{n7}, \cite{n8} and lead to equivalent theories. In this paper we use the method of Marde\v{s}i\'{c}-Lisica \cite{19}, which is more geometric and is connected to construction of strong homology groups.

As it is known, in the process of constructing the general shape category of topological spaces, the main step is to show that any resolution of the space is an expansion of the given space. For constructing the strong shape theory of topological spaces, it is an important fact that any resolution of the space is a strong expansion and any strong expansion is a coherent expansion \cite{17}, \cite{18}, \cite{19}. In the paper \cite{7} the fiber resolution and fiber expansion of continuous maps is defined and it is shown that any fiber resolution is a fiber expansion. In this paper we will define a strong fiber expansion. We will modify some lemmas and theorems of \cite{7}, \cite{17}, \cite{18} and will show that any fiber resolution is a strong fiber expansion. Besides, we will prove an analogous lemma of the main lemma on strong expansions \cite{17}. Using the obtained results and methods of strong shape theory, we will construct a strong fiber shape category of maps of compact metric spaces.

\

In this paper we will use the following notations and notions.

\

Let $\mathbf{Top}$ be the category of topological spaces and continuous maps. Denote by $\mathbf{MOR_{Top}}$ the category of morphisms of the category $\mathbf{Top}$. Therefore, any continuous map $f:X \to X'$ is an object of the category $\mathbf{MOR_{Top}}$ and if $f:X \to X'$ and $g:Y \to Y'$ are two objects of this category, then a pair $(\varphi, \varphi ')$ of continuous maps $\varphi :X \to Y$ and $\varphi ' :X' \to Y'$ is a morphism of $\mathbf{MOR_{Top}}$  if the condition $f' \circ \varphi =\varphi ' \circ f$ is fulfilled. Let $\mathbf{CM}$ be the category of compact metric spaces and continuous maps. Denote by $\mathbf{MOR_{CM}}$ the full subcategory of the category $\mathbf{MOR_{Top}}$ objects of which are continuous maps of compact metric spaces. In the case of the category $\mathbf{M}$ of metric spaces, $\mathbf{MOR_M}$ denotes the corresponding full subcategory of the category $\mathbf{MOR_{Top}}$.

\

Two morphisms $(\varphi,\varphi '), (\psi.\psi'):f \to f'$ of the category $\mathbf{MOR_{Top}}$ are called homotopic if there is a morphism 
\begin{equation}
(\Theta,\Theta'):f \times 1_I \to f'
\end{equation}
such that $\Theta$ is a homotopy from $\varphi$ to $\psi $ and $\Theta'$ is a homotopy from $\varphi '$ to $\psi '$ \cite{7}.

\

A submap of a map $f:X \to X'$ is a map $g:A \to A'$, where $A \subset X$, $A' \subset X'$ and $f_{|A}=g$. The submap is called closed (open) if $A$ and $A'$ are closed (open) subspaces of  $X$ and  $X'$, respectively \cite{7}.

\

Let $g:A \to A'$ be a submap of a map $f:X \to X'$ and $(\varphi, \varphi '):f \to g$ be a morphism. The pair $(\varphi_{|A}, \varphi '_{|A'})$ of restrictions $\varphi_{|A}:A \to Y$ and $\varphi'_{|A'}:A' \to Y'$ is called a restriction of morphism $(\varphi, \varphi ')$ on the submap $g:A \to A'$ and this pair is denoted by $(\varphi, \varphi ')_{|g}$ \cite{7}.

\

A morphism $(i,i'):g \to f$ is said to be an embedding. If  $i:A \to X$ and $i':A' \to X'$ are both embeddings. If both $i:A \to X$ and $i':A' \to X'$ are closed maps, then $(i,i'):g \to f$ is said to be a closed embedding \cite{7}.

\

A morphism $(\varphi, \varphi '):f \to g$ is said to be constant if $\varphi : X \to Y$ and $\varphi ' : X' \to Y'$ are constant maps \cite{7}.

\

Let $g:A \to A'$ be a submap of a map $f:X \to X'$. A submap $f_U:U \to U'$ of a map $f$ is said to be a neighborhood of $g$ in $f$, if $U$ is an open neighborhood of $A$ in $X$ and  $U'$ is an open neighborhood of $A'$ in $X'$ \cite{7}.

\

Let $g:A \to A'$ be a submap of a map $f:X \to X'$ and $(\varphi,\varphi ' ):g \to h$ be a morphism. A morphism $(\tilde{\varphi}, \tilde{\varphi} '):f \to h$ is said to be an extension of $(\varphi, \varphi ')$, if $(\tilde{\varphi}, \tilde{\varphi} '  )_{|g}=(\varphi, \varphi ' )$ \cite{7}.

\

Let $(i,i'):g \to f$ be an embedding. The g is said to be a retract of  $f$ provided that there exists a morphism $(r,r'):f \to g$ such that $(r,r') \circ (i,i')=(1_X,1_{X'})$ \cite{7}.

\

A submap $g$ is said to be a neighborhood retract of  $f$, if it is a retract of some neighborhood $f_U$ of $g$ in $f$ \cite{7}.

\

A map $f:X \to X'$ of the category $\mathbf{MOR_M}$ is said to be an absolute retract for the category $\mathbf{MOR_M}$, if for each closed embedding $(i,i'):g \to f \in \mathbf{MOR_M}$ there exists a retraction $(r,r'):f \to (i,i')(g)$ \cite{7}.

\

A map $f:X \to X'$ of the category $\mathbf{MOR_M}$ is said to be an absolute neighborhood retract for the category $\mathbf{MOR_M}$, if for each closed embedding $(i,i'):g \to f \in \mathbf{MOR_M}$ there exists a neighborhood $f_U:U \to U'$ of  $(i,i')(g)$ in $f$ and a retraction $(r,r):f_U \to (i,i' )(g)$ \cite{7}.

\

Let $\mathbf{AR(MOR_M)}$ and $\mathbf{ANR(MOR_M)}$ be the category of all absolute retracts and all absolute neighborhood retracts for the category $\mathbf{MOR_M}$, respectively. Note that if $f \in \mathbf{A(N)R(MOR_M)}$, then $f$ is called an $\mathbf{A(N)R(MOR_M)}$-map. From now, we call an $\mathbf{A(N)R(MOR_M)}$-map an $\mathbf{A(N)R}$-map.

\section{Fiber~ resolution ~and~ strong fiber expansion of a continuous map}
\

Let $\mathbf{f}=\left\{f_{\lambda } ,\left(p_{\lambda ,\lambda '} ,p'_{\lambda ,\lambda '} \right)\; ,{\rm \Lambda }\right\}$ be an inverse system in the category $\mathbf{Mor_{Top} }$ of continuous maps of topological spaces. Let $f=\left\{f\right\}$ be a rudimentary system whose term is just only a map  $f:X\to X'$.
\begin{Definition} (see \cite{7}) A fiber resolution of a map $f$ is a morphism  $\mathbf{\left(p,p'\right)}=\left\{\left(p_{\lambda } ,p'_{\lambda } \right)\right\}:f\to \mathbf{f}$ of the category $\mathbf{pro-Mor_{Top}} $\textbf{ }which for any $\mathbf{ANR}$-map $t:P\to P'$ and a pair $\left(\alpha ,\alpha '\right)$ of coverings $\alpha \in Cov\left(P\right)$ and $\alpha '\in Cov\left(P'\right)$, satisfies the following two conditions:

FR1) for every morphism $\left(\varphi ,\varphi '\right):f\to t$ there exist $\lambda \in {\rm \Lambda }$ and a morphism $\left(\varphi _{\lambda } ,\varphi' _{\lambda } \right):f_{\lambda } \to t$ such that $\left(\varphi _{\lambda } ,\varphi ' _{\lambda } \right)\circ \left(p_{\lambda } ,p'_{\lambda } \right)$ and $\left(\varphi ,\varphi '\right)$ are $\left(\alpha ,\alpha '\right)$-near;

FR2) there exists a pair $\left(\beta ,\beta '\right)$ of coverings $\beta \in Cov\left(P\right)$ and $\beta '\in Cov\left(P'\right)$ with the following property: if $\lambda \in {\rm \Lambda }$ and $\left(\varphi _{\lambda } ,\varphi ' _{\lambda } \right),\left(\psi _{\lambda } ,\psi ' _{\lambda } \right):f_{\lambda } \to t$ are morphisms such that the morphisms $\left(\varphi _{\lambda } ,\varphi ' _{\lambda } \right)\circ \left(p_{\lambda } ,p ' _{\lambda } \right)$  and $\left(\psi _{\lambda } ,\psi ' _{\lambda } \right)\circ \left(p_{\lambda } ,p ' _{\lambda } \right)$ are $\left(\beta ,\beta '\right)$-near, then there exists a $\lambda '\ge \lambda $ such that $\left(\varphi _{\lambda } ,\varphi ' _{\lambda } \right)\circ \left(p_{\lambda \lambda '} ,p ' _{\lambda \lambda '} \right)$  and $\left(\psi _{\lambda } ,\psi ' _{\lambda } \right)\circ \left(p_{\lambda \lambda '} ,p'_{\lambda \lambda '} \right)$ are $\left(\alpha ,\alpha '\right)$-near.
\end{Definition}
If in a fiber resolution $\mathbf{ \left(p,p'\right)}:f\to \mathbf{f}$ each $f_{\lambda } $ is an $\mathbf{ANR}$-map, then this fiber resolution is called and $\mathbf{ANR}$-resolution.

In the paper \cite{7} it is shown that any continuous map admits an $\mathbf{ANR}$-fiber resolution (see theorem 3.2 of \cite{7}). In this section our aim is to define a strong fiber expansion of a continuous map and to prove that any $\mathbf{ANR}$-fiber resolution is a strong fiber expansion. For this aim we need some modification of lemma 3.4 of \cite{7} and analogous result of  lemma 1 of \cite{17}.

\begin{Definition} We will say that a morphism  $\mathbf{\left(p,p'\right)}=\left\{\left(p_{\lambda } ,p'_{\lambda } \right)\; \right\}:f\to \mathbf{f}$ of the category $\mathbf{pro-Mor_{Top}} $ is a strong fiber expansion of a continuous map $f:X\to X'$, if for every $\mathbf{ANR}$-map $t:P\to P'$ the following conditions are fulfilled:

SF1) for every morphism $\left(\varphi ,\varphi '\right):f\to t$ there exist $\lambda \in {\rm \Lambda }$ and a morphism $\left(\varphi _{\lambda } ,\varphi ' _{\lambda } \right):f_{\lambda } \to t$ such that
\begin{equation} \label{GrindEQ__2_1_} 
\left(\varphi ,\varphi '\right)\cong \left(\varphi _{\lambda } ,\varphi ' _{\lambda } \right) \circ \left(p_{\lambda } ,p ' _{\lambda } \right). 
\end{equation} 

SF2) if $\lambda \in {\rm \Lambda }$ and $\left(\varphi _{\lambda } ,\varphi ' _{\lambda } \right),\left(\psi _{\lambda } ,\psi ' _{\lambda } \right):f_{\lambda } \to t$ are morphisms such that the morphisms $\left(\varphi _{\lambda } ,\varphi ' _{\lambda } \right)\circ \left(p_{\lambda } ,p '_{\lambda } \right)$  and $\left(\psi _{\lambda } ,\psi ' _{\lambda } \right)\circ \left(p_{\lambda } ,p'_{\lambda } \right)$ are connected by a fiber homotopy $\left( \Theta , \Theta'\right):f\times 1_{I} \to t$, then there exist a $\lambda '\ge \lambda $ and fiber homotopies $\left({\rm \Delta },{\rm \Delta }'\right):f_{\lambda } \times 1_{I} \to t$ and $\left({\rm \Gamma },{\rm \Gamma }'\right):f\times 1_{I} \times 1_{I} \to t$ such that the homotopy  $\left({\rm \Delta },{\rm \Delta }'\right)$ connects  the morphisms $\left(\varphi _{\lambda } ,\varphi ' _{\lambda } \right)\circ \left(p_{\lambda \lambda '} ,p'_{\lambda \lambda '} \right)$  and $\left(\psi _{\lambda } ,\psi ' _{\lambda } \right)\circ \left(p_{\lambda \lambda '} ,p'_{\lambda \lambda '} \right)$ and the homotopy $\left({\rm \Gamma },{\rm \Gamma }'\right)$ connects $\left({\rm \Theta },{\rm \Theta }'\right)$ and $\left({\rm \Delta },{\rm \Delta }'\right)\circ \left(p_{\lambda } \times 1_{I} ,p'_{\lambda } \times 1_{I} \right)$ and  is fixed on the submap  $f\times 1_{\partial I} :X\times 1_{\partial I} \to X'\times 1_{\partial I} $.
\end{Definition}

Let $C\left(Z,P\right)$ be the space of all continuous functions from $Z$ to $P\; $ endowed with the compact-open topology. For any continuous map $t:P\to P'$ and any topological space $Z$ denote by $t^{\# } :C\left(Z,P\right)\to C\left(Z,P'\right)$ the map which is defined by the formula 
\begin{equation} \label{GrindEQ__2_2_} 
t^{\# } \left(f\right)=t\circ f,\; \; \; \; \; \; \forall \; f\in C\left(Z,P\right).                                  
\end{equation} 
\begin{proposition}
Let $t:P\to P'$ be an $\mathbf{ANR}$-map and let $Z$ be a compact metric space. Then the map $t^{\# } :C\left(Z,P\right)\to C\left(Z,P'\right)$ is an $\mathbf{ANR}$-map.
\end{proposition}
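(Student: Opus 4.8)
The plan is to reduce the statement to the corresponding property of $t$ by means of the exponential law. Since $Z$ is compact metric and $P,P'$ are metric, the function spaces $C(Z,P)$ and $C(Z,P')$ are again metric (for $Z$ compact the compact--open topology coincides with the sup--metric) and $t^{\#}$ is continuous, so $t^{\#}:C(Z,P)\to C(Z,P')$ is an object of $\mathbf{MOR_M}$. I would work with the following characterization, which is the map--theoretic form of the Kuratowski--Dugundji theorem and which should be available from the techniques of \cite{7}: a metric map $s:Q\to Q'$ is an $\mathbf{ANR}$-map if and only if it is an \emph{absolute neighborhood extensor} for $\mathbf{MOR_M}$, that is, for every closed embedding $(j,j'):h\to k$ of metric maps and every morphism $(\varphi,\varphi'):h\to s$ there exist a neighborhood $k_{W}:W\to W'$ of $h$ in $k$ and an extension $(\tilde\varphi,\tilde\varphi'):k_{W}\to s$ of $(\varphi,\varphi')$. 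Granting this, the goal becomes to verify that $t^{\#}$ has this extension property.

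The main tool is the exponential law for the compact (hence locally compact Hausdorff) space $Z$: the assignment $\Phi\mapsto\widehat\Phi$, $\widehat\Phi(b)(z)=\Phi(b,z)$, is a bijection $C(B\times Z,Q)\to C(B,C(Z,Q))$, natural in both $B$ and $Q$; naturality in $Q$ says exactly that under this bijection post--composition with $t$ corresponds to post--composition with $t^{\#}$. I will also use two elementary facts, each resting on compactness of $Z$: if $(j,j'):h\to k$ is a (closed) embedding of metric maps then so is $(j\times 1_{Z},j'\times 1_{Z}):h\times 1_{Z}\to k\times 1_{Z}$; and, by the tube lemma, every open neighborhood of $B\times Z$ in $C\times Z$ contains a set of the form $W\times Z$ with $W$ open in $C$ and $W\supseteq B$.

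Now for the argument itself. Let $(j,j'):h\to k$ be a closed embedding of metric maps, with $h:B\to B'$ and $k:C\to C'$, and let $(\varphi,\varphi'):h\to t^{\#}$ be a morphism. Transposing, $(\varphi,\varphi')$ corresponds to $\Phi:B\times Z\to P$ and $\Phi':B'\times Z\to P'$, and, by naturality in $Q$, the identity $t^{\#}\circ\varphi=\varphi'\circ h$ becomes $t\circ\Phi=\Phi'\circ(h\times 1_{Z})$, i.e.\ $(\Phi,\Phi'):h\times 1_{Z}\to t$ is a morphism. Since $(j\times 1_{Z},j'\times 1_{Z}):h\times 1_{Z}\to k\times 1_{Z}$ is a closed embedding of metric maps and $t$ is an $\mathbf{ANR}$-map, its extension property gives a neighborhood $(k\times 1_{Z})_{\tilde W}:\tilde W\to\tilde W'$ of $h\times 1_{Z}$ in $k\times 1_{Z}$ and an extension $(\tilde\Phi,\tilde\Phi'):(k\times 1_{Z})_{\tilde W}\to t$ of $(\Phi,\Phi')$. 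By the tube lemma and continuity of $k$, choose open $W'\supseteq B'$ in $C'$ with $W'\times Z\subseteq\tilde W'$, and then open $W\supseteq B$ in $C$ with $W\times Z\subseteq\tilde W$ and $k(W)\subseteq W'$, so that $k_{W}:W\to W'$ is a neighborhood of $h$ in $k$. Restricting $\tilde\Phi$ to $W\times Z$ and $\tilde\Phi'$ to $W'\times Z$ and transposing back, one obtains $(\tilde\varphi,\tilde\varphi'):k_{W}\to t^{\#}$, and naturality shows it is a morphism extending $(\varphi,\varphi')$. Hence $t^{\#}$ is an absolute neighborhood extensor for $\mathbf{MOR_M}$, i.e.\ an $\mathbf{ANR}$-map.

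The only genuinely non--formal ingredient is the characterization ``$\mathbf{ANR}$-map $\Leftrightarrow$ absolute neighborhood extensor map'' (the fiber Kuratowski--Dugundji theorem); if it is not directly available from \cite{7} it must be proved first, and that is where the real work lies. With it in hand the remaining steps are routine bookkeeping, the points needing attention being that $-\times 1_{Z}$ preserves closed embeddings of maps, that the transposition faithfully carries the morphism identity $t\circ\Phi=\Phi'\circ(h\times 1_{Z})$ to $t^{\#}\circ\varphi=\varphi'\circ h$ and back, and that the tube--lemma step actually yields a \emph{submap} neighborhood, i.e.\ $k(W)\subseteq W'$. An alternative route avoids the extension characterization: one embeds $t$ as a closed submap of the projection $\operatorname{pr}_{1}:E'\times E\to E'$, where $E\supseteq P$ and $E'\supseteq P'$ are normed linear spaces in which $P$ and $P'$ are closed (Arens--Eells), so that $\operatorname{pr}_{1}$ is an $\mathbf{AR}$-map; applying the functor $C(Z,-)$ — which for $Z$ compact preserves closed embeddings, open sets, finite products and the normed linear structure — one carries a neighborhood retraction of the image of $t$ in $\operatorname{pr}_{1}$ to a neighborhood retraction of the image of $t^{\#}$ in the $\mathbf{AR}$-map $\operatorname{pr}_{1}:C(Z,E')\times C(Z,E)\to C(Z,E')$, and since a closed--embedded neighborhood retract of an $\mathbf{AR}$-map is an $\mathbf{ANR}$-map, $t^{\#}$ is an $\mathbf{ANR}$-map.
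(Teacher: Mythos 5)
The paper does not actually prove this proposition: immediately after stating it, the authors write that it is proved in \cite{7} (Baladze, \emph{Fiber shape theory}), so there is no in-paper argument to measure you against. Your reconstruction is correct and is almost certainly the intended one: the exponential law $C(B\times Z,Q)\cong C(B,C(Z,Q))$ for compact $Z$, naturality of which converts morphisms into $t^{\#}$ to morphisms of $h\times 1_{Z}$ into $t$, plus the tube lemma to turn the neighborhood produced by the extension property of $t$ into a product neighborhood $W\times Z$, $W'\times Z$ with $k(W)\subseteq W'$. The bookkeeping you list (preservation of closed embeddings by $-\times 1_{Z}$, faithfulness of transposition on the commutation identity, metrizability of $C(Z,P)$ in the compact-open topology) is all handled correctly. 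You are also right to single out the one genuinely non-formal input: the equivalence between the retract definition of $\mathbf{ANR}$-map used in this paper's introduction and the neighborhood-extensor formulation (the fiber Kuratowski--Dugundji theorem). That equivalence is exactly the kind of foundational result established in \cite{7}, so your proof is not self-contained relative to this paper alone, but neither is the paper's own treatment, which outsources the entire proposition to the same reference. Your alternative route via a closed embedding of $t$ into the projection $\operatorname{pr}_{1}:E'\times E\to E'$ of normed linear spaces and the functoriality of $C(Z,-)$ is also viable and trades the extensor characterization for the fiber Arens--Eells embedding and the fact that such projections are $\mathbf{AR}$-maps --- again inputs of the same depth, so neither route is strictly more elementary.
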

This proposition is proved in \cite{7}.

\begin{lemma}
 Let $f:X\to X'$ be a map of topological spaces, $t':P_{1} \to P_{1}^{'} $, $t:P\to P'$ be $\mathbf{ANR}$-maps. If $\left(\zeta ,\zeta '\right):f\to t'$,  $\left(\xi ,\xi '\right),\; \left(\eta ,\eta '\right):t'\to t$ are morphisms and $\left({\rm \Theta },{\rm \Theta }'\right):f\times 1_{I} \to t$ is a homotopy which connects the morphisms $\left(\xi ,\xi '\right)\circ \left(\zeta ,\zeta '\right)$  and $\left(\eta ,\eta '\right)\circ \left(\zeta ,\zeta '\right)$, then there exist an $\mathbf{ANR}$-map $t'':P_{2} \to P_{2}^{'} $, a morphism $\left(\sigma ,\sigma '\right):f\to t''$,  $\left(\kappa ,\kappa '\right):t''\to t'$ and a fiber homotopy $\left({\rm \Delta },{\rm \Delta }'\right):t''\times 1_{I} \to t$ such that 
\begin{equation} \label{GrindEQ__2_3_} 
\left(\zeta ,\zeta '\right)=\left(\kappa ,\kappa '\right)\circ \left(\sigma ,\sigma '\right),                                   
\end{equation} 
\begin{equation} \label{GrindEQ__2_4_} 
\left({\rm \Delta },{\rm \Delta }'\right):\left(\xi ,\xi '\right)\circ \left(\kappa ,\kappa '\right)\cong \left(\eta ,\eta '\right)\circ \left(\kappa ,\kappa '\right),                   
\end{equation} 
\begin{equation} \label{GrindEQ__2_5_} 
\left({\rm \Theta },{\rm \Theta }'\right)=\left({\rm \Delta },{\rm \Delta }'\right)\circ \left(\sigma \times 1_{I} ,\sigma _{1}^{'} \times 1_{I} \right).                        
\end{equation} 
\end{lemma}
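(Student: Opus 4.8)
The plan is to realize the required map $t'':P_2\to P_2'$ as (a map built from) a path-space construction applied to $t$, using the adjunction between homotopies and maps into path spaces, together with Proposition 1.4 applied to the compact metric space $Z=I$. First I would form the path-space map $t^I:C(I,P)\to C(I,P')$; by Proposition 1.4 (with $Z=I$) this is an $\mathbf{ANR}$-map. A fiber homotopy $(\Theta,\Theta'):f\times 1_I\to t$ between $(\xi,\xi')\circ(\zeta,\zeta')$ and $(\eta,\eta')\circ(\zeta,\zeta')$ is, by the exponential law, the same datum as a morphism $\widehat{(\Theta,\Theta')}:f\to t^I$ whose composite with the two endpoint evaluations $e_0,e_1:t^I\to t$ gives $(\xi,\xi')\circ(\zeta,\zeta')$ and $(\eta,\eta')\circ(\zeta,\zeta')$ respectively. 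The idea is then to pull back: take $P_2$ to be the subspace of $C(I,P)\times P_1$ consisting of pairs $(\omega,a)$ with $\omega(0)=\xi(a)$, $\omega(1)=\eta(a)$ (and similarly for the primed side), i.e. the map $t''$ fitting into the pullback of $t^I$ along the morphism $(\xi,\xi')\sqcup(\eta,\eta'):t'\to t\times t$ refined by the endpoint maps. The morphism $(\sigma,\sigma'):f\to t''$ is induced by the universal property from $(\zeta,\zeta'):f\to t'$ and $\widehat{(\Theta,\Theta')}:f\to t^I$, and $(\kappa,\kappa'):t''\to t'$ is the projection onto the second factor, so that \eqref{GrindEQ__2_3_} holds by construction.

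Next I would define $(\Delta,\Delta'):t''\times 1_I\to t$ to be, on the first coordinate, the evaluation $(\omega,s)\mapsto\omega(s)$ and similarly on $P_2'$; this is continuous by compactness of $I$ and the compact-open topology, and it is a fiber homotopy because the defining condition of $P_2$ is compatible with $t$. Its endpoints are $e_0\circ(\kappa,\kappa')$-type and $e_1$-type composites, which by the pullback constraints equal $(\xi,\xi')\circ(\kappa,\kappa')$ and $(\eta,\eta')\circ(\kappa,\kappa')$, giving \eqref{GrindEQ__2_4_}. Finally, composing $(\Delta,\Delta')$ with $(\sigma\times 1_I,\sigma'\times 1_I)$ recovers $(\omega,s)\mapsto\omega(s)$ evaluated along $\widehat{(\Theta,\Theta')}$, which by the exponential law is exactly $(\Theta,\Theta')$, establishing \eqref{GrindEQ__2_5_}.

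The main obstacle is showing that $t'':P_2\to P_2'$ is again an $\mathbf{ANR}$-map in the sense of $\mathbf{A(N)R(MOR_M)}$, not merely that $P_2$ and $P_2'$ are ANR's. For this I would argue that $t''$ is obtained from the $\mathbf{ANR}$-map $t^I$ (Proposition 1.4) by pullback along a map of $\mathbf{ANR}$-maps — more precisely, that $P_2$ sits as a (neighborhood) retract inside an open subset of $C(I,P)\times P_1$ whose associated map is an $\mathbf{ANR}$-map, using that $t'$ is an $\mathbf{ANR}$-map and that the endpoint-evaluation constraints cut out a set that is locally a retract. One must check that the retraction producing the neighborhood retract is compatible with the maps to the primed spaces, i.e. is a morphism in $\mathbf{MOR_M}$; this is where the argument of \cite{7} for Proposition 1.4 has to be adapted, keeping track of the two levels simultaneously. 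A secondary technical point is that all spaces in sight remain metrizable: $Z=I$ is compact metric and $P,P_1$ are ANR's (hence metrizable), so $C(I,P)$ is metrizable, and the subspaces and finite products stay in $\mathbf{M}$, so the constructions do not leave the category $\mathbf{MOR_M}$. Once these are in place, the verification of \eqref{GrindEQ__2_3_}–\eqref{GrindEQ__2_5_} is the routine unwinding of the exponential law sketched above.
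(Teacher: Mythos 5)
Your proposal follows essentially the same route as the paper: the paper also forms $t^{\#}:C(I,P)\to C(I,P')$, takes $P_2$ to be the subspace of $P_1\times C(I,P)$ cut out by the endpoint conditions $\varphi(0)=\xi(y)$, $\varphi(1)=\eta(y)$, defines $(\sigma,\sigma')$ from $(\zeta,\zeta')$ together with the adjoint $(\mu,\mu')$ of $(\Theta,\Theta')$, takes $(\kappa,\kappa')$ to be the projection and $(\Delta,\Delta')$ to be evaluation, and then verifies \eqref{GrindEQ__2_5_} by the same unwinding of the exponential law (the remaining verifications, including that $t''$ is an $\mathbf{ANR}$-map, are cited from \cite{7}). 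Your sketch is correct and matches the paper's construction.
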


Note that lemma 1.4 without \eqref{GrindEQ__2_5_} is proved in \cite{7}. The fiber homotopy $\left({\rm \Delta },{\rm \Delta }'\right):t''\times 1_{I} \to t$ is constructed there as well. So we just check that \eqref{GrindEQ__2_5_} is fulfilled.

\begin{proof}
Consider the $\mathbf{ANR}$-map $t^{\# } :C\left(I,P\right)\to C\left(I,P'\right)$ and define a morphism $\left(\mu ,\mu '\right):f\to t^{\# } $ by
\begin{equation} \label{GrindEQ__2_6_} 
\mu \left(x\right)\left(t\right)={\rm \Theta }\left(x,t\right),\; \; \; \; \; \; \; \; \; \; \; \; \forall \; x\in X,\; t\in I, 
\end{equation} 
\begin{equation} \label{GrindEQ__2_7_} 
\mu '\left(x'\right)\left(t\right)={\rm \Theta }'\left(x',t\right),\; \; \; \; \; \; \; \forall \; x'\in X',\; t\in I. 
\end{equation} 
In this case $t^{\# } \circ \mu =\mu '\circ f$  \cite{7}
. Now define a morphism $\left(\sigma ,\sigma '\right):f\to t'\times t^{\# } $ by
\begin{equation} \label{GrindEQ__2_8_} 
\sigma \left(x\right)=\left(\zeta \left(x\right),\mu \left(x\right)\right)\; \; \; \; \; \; \; \; \; \; \; \forall \; x\in X, 
\end{equation} 
\begin{equation} \label{GrindEQ__2_9_} 
\sigma '\left(x'\right)=\left(\zeta '\left(x'\right),\mu '\left(x'\right)\; \right)\; \; \; \; \; \; \; \; \; \; \; \forall \; x'\in X'. 
\end{equation} 
It is shown that $\left(t'\times t^{\# } \right)\circ \sigma =\sigma '\circ f$ \cite{7}.

Consider the projections $\kappa :P_{1} \times C\left(I,P\right)\to P_{1} $,  $\kappa ':P_{1}^{'} \times C\left(I,P'\right)\to P_{1}^{'} $ and the corresponding morphism $\left(\kappa ,{\rm \; }\kappa '\right):t'\times t^{\# } \to t'$  \cite{7}.

Let 
\begin{equation} \label{GrindEQ__2_10_} 
P_{2} =\left\{\left(y,\varphi \right)\in P_{1} \times C\left(I,P\right){\rm |}\varphi \left(0\right)=\xi \left(y\right),\; \varphi \left(1\right)=\xi \left(y\right)\right\}, 
\end{equation} 
\begin{equation} \label{GrindEQ__2_11_} 
P_{2}^{'} =\left\{\left(z,\psi \right)\in P_{1}^{'} \times C\left(I,P'\right){\rm |}\psi \left(0\right)=\xi '\left(y\right),\; \psi \left(1\right)=\xi '\left(y\right)\right\}. 
\end{equation} 
In the paper \cite{7} it is shown that $\left(t'\times t^{\# } \right)\left(P_{2} \right)\subset P_{2}^{'} $ and the sets $\sigma \left(X\right)$ and $\sigma '\left(X'\right)$ are the subsets of $P_{2} $ and $P_{2}^{'} $, respectively.  Consider the restrictions 
\begin{equation} \label{GrindEQ__2_12_} 
t"=\left(t'\times t^{\# } \right)_{\left|P_{2} \right. } :P_{2} \to P_{2}^{'} , 
\end{equation} 
\begin{equation} \label{GrindEQ__2_13_} 
\left(\kappa ,{\rm \; }\kappa '\right)=\left(\kappa ,{\rm \; }\kappa '\right)_{\left|t"\right. } :t"\to t'. 
\end{equation} 
Let $\left({\rm \Delta },{\rm \Delta }'\right)=t"\times 1_{I} \to t'$ be given by
\begin{equation} \label{GrindEQ__2_14_} 
{\rm \Delta }\left(\left(y,\varphi \right),t\right)=\varphi \left(t\right),\; \; \; \; \forall \; \left(y,\varphi \right)\in P_{2} , 
\end{equation} 
\begin{equation} \label{GrindEQ__2_15_} 
{\rm \Delta }'\left(\left(z,\psi \right),t\right)=\psi \left(t\right),\; \; \; \; \forall \; \left(z,\psi \right)\in P_{2}^{'} . 
\end{equation} 
Our aim is to show that \eqref{GrindEQ__2_5_} is fulfilled. Indeed,
\begin{equation} \label{GrindEQ__2_16_} 
{\rm \Delta }\circ \left(\sigma \times 1_{I} \right)\left(x,t\right)={\rm \Delta }\left(\sigma \left(x\right),t\right)={\rm \Delta }\left(\left(\zeta \left(x\right),\mu \left(x\right)\right),t\right)=\mu \left(x\right)\left(t\right)={\rm \Theta }\left(x,t\right), 
\end{equation} 
\begin{equation} \label{GrindEQ__2_17_} 
{\rm \Delta }'\circ \left(\sigma {'\; } \times 1_{I} \right)\left(x',t\right)={\rm \Delta }'\left(\sigma '\left(x'\right),t\right)={\rm \Delta }'\left(\left(\zeta '\left(x'\right),\mu '\left(x'\right)\right),t\right)=\mu '\left(x'\right)\left(t\right)={\rm \Theta }'\left(x',t\right). 
\end{equation} 
\end{proof}

\begin{theorem}
Let $t:P\to P'$ be an $\mathbf{ANR}$-map. Then every pair $\left(\alpha ,\alpha '\right)$ of coverings $\alpha $ and $\alpha '$ of  $P$ and $P'$, respectively, admits a pair $\left(\beta ,\beta '\right)$ of coverings of  $P$ and $P'$, respectively,  such that for any two $\left(\alpha ,\alpha '\right)$-near morphisms $\left(\varphi ,\varphi '\right),\left(\psi ,\psi '\right):f\to t$  from a map $f:X\to X'$ of arbitrary topological spaces into $t:P\to P'$, there exists a $\left(\beta ,\beta '\right)$-homotopy $\left({\rm \Theta },{\rm \Theta }'\right):\left(\varphi ,\varphi '\right)\cong \left(\psi ,\psi '\right)$. Moreover, if for a given point $x\in X$, $\varphi \left(x\right)=\psi \left(x\right),$ then
\begin{equation} \label{GrindEQ__2_18_} 
\left({\rm \Theta },{\rm \Theta }'\right)_{|f_{|\left\{x\right\}} \times 1_{I} } :f_{|\left\{x\right\}} \times 1_{I} \to t 
\end{equation} 
is constant.
\end{theorem}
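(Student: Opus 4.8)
The plan is to adapt to the present setting of maps the classical fact that sufficiently near maps into an $\mathbf{ANR}$ are small‑homotopic, the homotopy being stationary where the two maps already agree (cf.\ Lemma~1 of \cite{17}): one realises $t$ as a neighbourhood retract of a \emph{linear} map between convex sets and runs the straight‑line homotopy on $P$ and on $P'$ simultaneously, which is what makes it compatible with the commutativity $t\varphi=\varphi'f$. Since $X$ and $X'$ are arbitrary topological spaces there is no hope of producing the homotopy by extending a partial map over $X\times I$; passing to normed linear spaces is exactly what removes that obstacle. The pair $(\beta,\beta')$ that gets produced refines $(\alpha,\alpha')$ and is the one controlling the nearness of the two morphisms, the resulting homotopy being $(\alpha,\alpha')$‑small.

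First I would realise $t$ as a closed submap of a linear map, exactly as in the non‑fibred case. Choose convex subsets $M$ and $L'$ of normed spaces and closed embeddings $\iota:P\hookrightarrow M$, $\iota':P'\hookrightarrow L'$, set $L=M\times L'$ and let $t_{0}=\operatorname{pr}_{L'}:L\to L'$ be the projection, and embed $P$ closedly in $L$ via $p\mapsto(\iota(p),\iota'(t(p)))$; after the obvious identifications this gives a closed embedding $(i,i'):t\to t_{0}$ in $\mathbf{MOR_M}$ with $t_{0}|_{P}=t$, $t_{0}$ being the restriction of a bounded linear operator. Invoking the defining property of $\mathbf{ANR}$‑maps (\cite{7}) for this closed embedding, $t$ is a neighbourhood retract of $t_{0}$: there are a neighbourhood $(t_{0})_{W}:W\to W'$ of $(i,i')(t)$ in $t_{0}$ (so $W\supseteq P$ and $W'\supseteq P'$ are open and $t_{0}(W)\subseteq W'$) and a retraction $(r,r'):(t_{0})_{W}\to t$. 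Thus $r:W\to P$ and $r':W'\to P'$ satisfy $r|_{P}=1_{P}$, $r'|_{P'}=1_{P'}$ and, crucially, $t\circ r=r'\circ(t_{0}|_{W})$.

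Then I would fix $(\beta,\beta')$. For each $p\in P$ pick $A_{p}\in\alpha$ with $p\in A_{p}$ and, by continuity of $r$ together with $r(p)=p$, an open ball $C_{p}$ of $L$ with $p\in C_{p}\subseteq W$ and $r(C_{p})\subseteq A_{p}$; symmetrically pick $A'_{p'}\in\alpha'$ and open balls $C'_{p'}\subseteq W'$ of $L'$ with $p'\in C'_{p'}$ and $r'(C'_{p'})\subseteq A'_{p'}$. Put $\beta=\{C_{p}\cap P\mid p\in P\}$ and $\beta'=\{C'_{p'}\cap P'\mid p'\in P'\}$; since $q\in P\cap C_{p}$ forces $q=r(q)\in A_{p}$, these open covers refine $\alpha$ and $\alpha'$. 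Now let $(\varphi,\varphi'),(\psi,\psi'):f\to t$ be $(\beta,\beta')$‑near and set
\begin{equation}
\Theta(x,s)=r\bigl((1-s)\varphi(x)+s\psi(x)\bigr),\qquad
\Theta'(x',s)=r'\bigl((1-s)\varphi'(x')+s\psi'(x')\bigr).
\end{equation}
For each $x$ the points $\varphi(x),\psi(x)$ lie in a common $C_{p}\cap P$, so the segment joining them stays in the convex set $C_{p}\subseteq W$; hence $\Theta$, and likewise $\Theta'$, is well defined and continuous, with $\Theta(\cdot,0)=\varphi$ and $\Theta(\cdot,1)=\psi$ (as $r|_{P}=1_{P}$), and similarly for $\Theta'$. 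A short computation using linearity of $t_{0}$, $t_{0}|_{P}=t$, $t\varphi=\varphi'f$, $t\psi=\psi'f$ and $t\circ r=r'\circ(t_{0}|_{W})$ gives $t\circ\Theta=\Theta'\circ(f\times 1_{I})$, so $(\Theta,\Theta'):f\times 1_{I}\to t$ is a fiber homotopy from $(\varphi,\varphi')$ to $(\psi,\psi')$. Its tracks satisfy $\Theta(\{x\}\times I)\subseteq r(C_{p})\subseteq A_{p}$ and $\Theta'(\{x'\}\times I)\subseteq r'(C'_{p'})\subseteq A'_{p'}$, so it is an $(\alpha,\alpha')$‑homotopy; and if $\varphi(x)=\psi(x)$, the segment over $x$ degenerates to $\varphi(x)$, so $\Theta(x,\cdot)\equiv\varphi(x)$, while $\varphi'(f(x))=t\varphi(x)=t\psi(x)=\psi'(f(x))$ forces $\Theta'(f(x),\cdot)\equiv\varphi'(f(x))$, i.e.\ $(\Theta,\Theta')_{|f_{|\{x\}}\times 1_{I}}$ is constant.

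The step I expect to be the crux is the first one: one must make sure that \cite{7} really provides $(r,r')$ as a \emph{morphism} of $\mathbf{MOR_M}$, so that $r$ and $r'$ are intertwined by $t$ and $t_{0}$ via the identity $t\circ r=r'\circ(t_{0}|_{W})$ rather than being a pair of unrelated retractions of $P$ and of $P'$, and that $W$ may be chosen with $t_{0}(W)\subseteq W'$ so that this identity even makes sense. Once that is in hand, the remainder — the choice of $(\beta,\beta')$, the well‑definedness of the straight‑line homotopy (where the arbitrariness of $X,X'$ costs nothing), the verification of the fiber condition, the control of the tracks, and the stationarity clause — is the routine transcription to maps of the classical argument.
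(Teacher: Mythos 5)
The paper does not prove this statement at all: Theorem 1.5 is stated and immediately dispatched with ``This theorem is proved in \cite{7}.'' Your argument is therefore a genuine addition rather than a variant of something in the text, and it is the natural fibered version of the classical bridge--map argument (embed the $\mathbf{ANR}$-map as a closed submap of a linear projection between convex sets of normed spaces, take the neighborhood retraction $(r,r')$ supplied by the $\mathbf{ANR}$-map property, and push the straight-line homotopy through $r$ and $r'$ simultaneously). The step you single out as the crux is indeed the only place where the fibered setting matters, and it is covered by the paper's own definitions: a neighborhood $(t_0)_W:W\to W'$ of a submap satisfies $t_0(W)\subseteq W'$ by the definition of submap, and the retraction is required to be a \emph{morphism} $(r,r'):(t_0)_W\to t$, i.e.\ $t\circ r=r'\circ (t_0|_W)$, which is exactly the intertwining identity your computation of $t\circ\Theta=\Theta'\circ(f\times 1_I)$ needs. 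The choice of $(\beta,\beta')$ from convex balls inside $r^{-1}(A_p)$, the well-definedness of the segment homotopy, the $(\alpha,\alpha')$-control of the tracks, and the stationarity clause are all handled correctly.

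One discrepancy worth flagging: you prove that $(\beta,\beta')$-\emph{near} morphisms admit an $(\alpha,\alpha')$-\emph{homotopy}, whereas the theorem as printed asserts that $(\alpha,\alpha')$-near morphisms admit a $(\beta,\beta')$-homotopy. The printed version cannot be what is meant (with $\alpha=\{P\}$ it would force any two morphisms into $t$ to be homotopic), and the way Theorem 1.5 is invoked in the proof of Lemma 1.6 --- a pair $(\beta,\beta')$ star-refining $(\alpha_1,\alpha_1')$ is chosen so that $(\beta,\beta')$-near morphisms are $(\alpha_1,\alpha_1')$-homotopic --- confirms that the quantifiers are simply transposed in the statement. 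So you have proved the statement the paper actually uses; it would be worth saying so explicitly rather than silently correcting it.
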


This theorem is proved in \cite{7}.

Let $Z$ be a topological space and $\gamma $ be an open covering of it. For each $W\in \gamma $ let $J_{W} $ be an open covering of the unit interval $I$. In this case $\tilde{\gamma }=\left\{W\times J{\rm |}W\in \gamma ,\; J\in J_{W} \right\}$  is an open covering of $Z\times I$ which is called a stacked covering \cite{18}.

\begin{lemma}

 Let $Z$ be a normal space and let $\tilde{\gamma }=\left\{W\times J{\rm |}W\in \gamma ,\; J\in J_{W} \right\}$ be a stacked covering of  $Z\times I$, where $\gamma $ is locally finite and each  $J_{W} $, $W\in \gamma $ is finite. If for each $W\in \gamma $, consider a fixed real number $a_{W} >0$, then there exists a continuous function $\varphi :Z\to I$ such that every $z\in Z$ admits a $W\in \gamma $ such that 
\begin{equation} \label{GrindEQ__2_19_} 
z\in W,\; \; 0<\varphi \left(z\right)\le a_{W} .                               
\end{equation} 
\end{lemma}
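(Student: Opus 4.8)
The plan is to reduce the whole statement to the existence of a subordinate partition of unity, because the conclusion involves only the locally finite open cover $\gamma$ of $Z$ and the prescribed numbers $a_W$: the stacked structure of $\tilde\gamma$ and the finiteness of the $J_W$ play no role here, and the only thing they supply is that, since $\tilde\gamma$ covers $Z\times I$, the family $\gamma$ covers $Z$. First I would normalize the data: replacing each $a_W$ by $\min\{a_W,1\}$ does not weaken the conclusion, since a function $\varphi$ satisfying $0<\varphi(z)\le\min\{a_W,1\}$ for a suitable $W\ni z$ automatically satisfies $0<\varphi(z)\le a_W$ and takes values in $I$. So I may and will assume $0<a_W\le 1$ for every $W\in\gamma$.

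Next, using normality of $Z$ together with local finiteness of the open cover $\gamma$, I would invoke the classical existence theorem for partitions of unity to obtain a family $\{\eta_W\}_{W\in\gamma}$ of continuous functions $\eta_W:Z\to I$ such that the family $\{\eta_W\}$ is locally finite, $\{z\in Z\mid \eta_W(z)>0\}\subseteq W$ for each $W$, and $\sum_{W\in\gamma}\eta_W\equiv 1$. I would then simply set $\varphi=\sum_{W\in\gamma}a_W\eta_W$. Local finiteness of $\{\eta_W\}$ means that on a neighbourhood of each point this sum has only finitely many nonzero terms, so $\varphi$ is a well-defined continuous function, and it remains to check that it lands in $I$ and satisfies \eqref{GrindEQ__2_19_}.

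For an arbitrary $z\in Z$, since $\sum_W\eta_W(z)=1$ and only finitely many terms are nonzero, let $W_1,\dots,W_n$ be the members of $\gamma$ with $\eta_{W_i}(z)>0$ and choose $W_0$ among them for which $a_{W_0}$ is maximal. Then $\eta_{W_0}(z)>0$ forces $z\in W_0$; moreover $\varphi(z)=\sum_{i=1}^n a_{W_i}\eta_{W_i}(z)\le a_{W_0}\sum_{i=1}^n\eta_{W_i}(z)=a_{W_0}\le 1$, so $\varphi$ does map $Z$ into $I$, and $\varphi(z)=\sum_{i=1}^n a_{W_i}\eta_{W_i}(z)>0$ because each summand is a product of two positive numbers. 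Hence $z\in W_0$ with $0<\varphi(z)\le a_{W_0}$, which is precisely \eqref{GrindEQ__2_19_}.

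I do not expect a genuine obstacle in this argument. The single nontrivial ingredient is the existence of a locally finite partition of unity subordinate to a locally finite open cover of a normal space, which is standard; everything else is the elementary remark that a convex combination of the $a_{W_i}$ does not exceed their maximum. The only two points that call for a little care are keeping $\varphi$ inside $[0,1]$, which is handled by the preliminary truncation of the $a_W$ at $1$, and the strict inequality $\varphi(z)>0$, which follows from the fact that at least one $\eta_W$ is positive at each point together with $a_W>0$.
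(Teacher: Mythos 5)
Your argument is correct. Note first that the paper itself does not prove this lemma; it only cites Marde\v{s}i\'{c} \cite{18}, so there is no in-paper proof to match against. Your reduction is sound: the stacked structure and the finiteness of the $J_{W}$ are indeed irrelevant to the conclusion, and all that is used is that $\gamma$ is a locally finite open cover of the normal space $Z$ (which is part of the definition of a stacked covering). After truncating $a_{W}$ at $1$, the existence of a locally finite partition of unity $\{\eta_{W}\}$ subordinate to $\gamma$ is the standard theorem for locally finite open covers of normal spaces, and the verification that $\varphi=\sum_{W}a_{W}\eta_{W}$ is continuous, takes values in $I$, is strictly positive, and satisfies $\varphi(z)\le a_{W_{0}}$ for the $W_{0}$ maximizing $a_{W}$ among the members of $\gamma$ on which $\eta_{W}(z)>0$ is exactly the convex-combination estimate you state. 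For comparison, the argument in the cited source proceeds slightly differently: one shrinks $\gamma$ to a closed cover $\{F_{W}\}$ with $F_{W}\subset W$, chooses Urysohn functions $\phi_{W}:Z\to[0,a_{W}]$ equal to $a_{W}$ on $F_{W}$ and vanishing off $W$, and sets $\varphi(z)=\sup_{W}\phi_{W}(z)$, which by local finiteness is a maximum attained at some $W_{1}$ with $\phi_{W_{1}}(z)>0$, giving $z\in W_{1}$ and $0<\varphi(z)\le a_{W_{1}}$. Both routes rest on the same normality input (shrinking plus Urysohn), and yours is no less elementary; the partition-of-unity formulation simply packages those two steps into one standard citation.
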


This lemma is proved in \cite{18}.

Now according to \cite{18} we will formulate and prove the following:

\begin{lemma}
 Let $\mathbf{\left(p,p'\right)}=\left\{\left(p_{\lambda } ,p'_{\lambda } \right)\; \right\}:f\to \mathbf{f}$ be a fiber resolution and let $\lambda \in {\rm \Lambda }$,  $t:P\to P'$ be a continuous map,  $\left(\varphi _{\lambda } ,\varphi' _{\lambda } \right),\left(\psi _{\lambda } ,\psi' _{\lambda } \right):f_{\lambda } \to t$ be morphisms and $\left({\rm \Theta },{\rm \Theta }'\right):f\times 1_{I} \to t$ be a fiber homotopy between $\left(\varphi _{\lambda } ,\varphi' _{\lambda } \right)\circ \left(p_{\lambda } ,p'_{\lambda } \right)$ and $\left(\psi _{\lambda } ,\psi' _{\lambda } \right)\circ \left(p_{\lambda } ,p'_{\lambda } \right)$. Then for every pair $\left(\alpha ,\alpha '\right)$ of coverings $\alpha \in Cov\left(P\right)$ and $\alpha '\in Cov\left(P'\right)$, there exist $\lambda '\ge \lambda $ and a fiber homotopy  $\left({\rm \Delta },{\rm \Delta }'\right):f_{\lambda '} \times 1_{I} \to t$ such that
\begin{equation} \label{GrindEQ__2_20_} 
\left({\rm \Delta },{\rm \Delta }'\right):\left(\varphi _{\lambda } ,\varphi' _{\lambda } \right)\circ \left(p_{\lambda \lambda '} ,p'_{\lambda \lambda '} \right)\cong \left(\psi _{\lambda } ,\psi' _{\lambda } \right)\circ \left(p_{\lambda \lambda '} ,p'_{\lambda \lambda '} \right),      
\end{equation} 
\begin{equation} \label{GrindEQ__2_21_} 
\left(\left({\rm \Theta },{\rm \Theta }'\right),\left({\rm \Delta },{\rm \Delta }'\right)\circ \left(p_{\lambda } \times 1_{I} ,p'_{\lambda } \times 1_{I} \right)\right)\le \left(\alpha ,\alpha '\right).\;  
\end{equation} 
\end{lemma}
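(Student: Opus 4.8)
The plan is to follow the pattern of the proof of the corresponding lemma on strong expansions in \cite{18}, the main technical inputs being the path-space packaging supplied by Lemma 1.4 (including its extra conclusion \eqref{GrindEQ__2_5_}), the resolution conditions FR1 and FR2, the controlled-homotopy Theorem 1.5, and the stacked-covering Lemma 1.6. Schematically the argument has three moves: (i) encode the fiber homotopy $({\rm\Theta},{\rm\Theta}')$ as a single morphism from $f$ into an auxiliary $\mathbf{ANR}$-map built out of $t$ and the path space of $P$; (ii) push that morphism down the system $\mathbf f$ by FR1, and use FR2 together with Theorem 1.5 to recognise the pushed-down datum, up to a controlled homotopy, as a thing already present at the index $\lambda$; (iii) repair the two ends of the homotopy, at the cost of passing to a larger index $\lambda'$, so that it connects $(\varphi_\lambda,\varphi'_\lambda)\circ(p_{\lambda\lambda'},p'_{\lambda\lambda'})$ and $(\psi_\lambda,\psi'_\lambda)\circ(p_{\lambda\lambda'},p'_{\lambda\lambda'})$ on the nose. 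The last move is where Lemma 1.6 is used.

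In more detail I would argue as follows. First, apply Lemma 1.4 with $t'=f_\lambda$, $(\xi,\xi')=(\varphi_\lambda,\varphi'_\lambda)$, $(\eta,\eta')=(\psi_\lambda,\psi'_\lambda)$, $(\zeta,\zeta')=(p_\lambda,p'_\lambda)$ and the given homotopy $({\rm\Theta},{\rm\Theta}')$; this yields an $\mathbf{ANR}$-map $t''$, morphisms $(\sigma,\sigma'):f\to t''$ and $(\kappa,\kappa'):t''\to f_\lambda$, and a fiber homotopy $(\bar{\rm\Delta},\bar{\rm\Delta}'):t''\times 1_I\to t$ with $(p_\lambda,p'_\lambda)=(\kappa,\kappa')\circ(\sigma,\sigma')$, with $(\bar{\rm\Delta},\bar{\rm\Delta}')$ connecting $(\varphi_\lambda,\varphi'_\lambda)\circ(\kappa,\kappa')$ and $(\psi_\lambda,\psi'_\lambda)\circ(\kappa,\kappa')$, and with $({\rm\Theta},{\rm\Theta}')=(\bar{\rm\Delta},\bar{\rm\Delta}')\circ(\sigma\times 1_I,\sigma'\times 1_I)$ by \eqref{GrindEQ__2_5_}. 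Next, fix a pair of coverings $(\gamma,\gamma')$ of the domain and codomain of $t''$, to be pinned down only at the end, chosen fine enough that $\bar{\rm\Delta},\bar{\rm\Delta}'$ carry $(\gamma,\gamma')$-near maps to $(\alpha,\alpha')$-near maps and that $(\kappa,\kappa')$ carries $(\gamma,\gamma')$-near maps into the pair furnished by FR2 and Theorem 1.5 for the $\mathbf{ANR}$-map $f_\lambda$; by FR1 there are $\mu\in{\rm\Lambda}$ and $(\sigma_\mu,\sigma'_\mu):f_\mu\to t''$ with $(\sigma_\mu,\sigma'_\mu)\circ(p_\mu,p'_\mu)$ and $(\sigma,\sigma')$ being $(\gamma,\gamma')$-near. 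Composing with $(\kappa,\kappa')$ shows $(\kappa,\kappa')\circ(\sigma_\mu,\sigma'_\mu)\circ(p_\mu,p'_\mu)$ is close to $(\kappa,\kappa')\circ(\sigma,\sigma')=(p_\lambda,p'_\lambda)$, and FR2 followed by Theorem 1.5 then produces $\lambda_0\ge\lambda,\mu$ and a controlled fiber homotopy $({\rm\Gamma}_0,{\rm\Gamma}'_0)$ connecting $(p_{\lambda\lambda_0},p'_{\lambda\lambda_0})$ with $(\kappa,\kappa')\circ(\sigma_\mu,\sigma'_\mu)\circ(p_{\mu\lambda_0},p'_{\mu\lambda_0})$. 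Now set $({\rm\Delta}_1,{\rm\Delta}'_1):=(\bar{\rm\Delta},\bar{\rm\Delta}')\circ(\sigma_\mu\times 1_I,\sigma'_\mu\times 1_I)$, precomposed with $(p_{\mu\lambda_0},p'_{\mu\lambda_0})$: this fiber homotopy $f_{\lambda_0}\times 1_I\to t$ connects $(\varphi_\lambda,\varphi'_\lambda)\circ(\kappa,\kappa')\circ(\sigma_\mu,\sigma'_\mu)\circ(p_{\mu\lambda_0},p'_{\mu\lambda_0})$ with the corresponding $\psi$-expression, while precomposing it with $(p_{\lambda_0},p'_{\lambda_0})$ returns $(\bar{\rm\Delta},\bar{\rm\Delta}')$ applied to a $(\gamma,\gamma')$-perturbation of $(\sigma,\sigma')$, hence something $(\alpha,\alpha')$-near to $({\rm\Theta},{\rm\Theta}')$. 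Finally, splice $(\varphi_\lambda,\varphi'_\lambda)\circ({\rm\Gamma}_0,{\rm\Gamma}'_0)$ and $(\psi_\lambda,\psi'_\lambda)\circ({\rm\Gamma}_0,{\rm\Gamma}'_0)$ onto the two ends of $({\rm\Delta}_1,{\rm\Delta}'_1)$ and invoke Lemma 1.6 to carry out this splicing with the gluing intervals pointwise so short (adapted to the variation of ${\rm\Theta},{\rm\Theta}'$, via a continuous function on $X_{\lambda'}$ pulled back compatibly to $X'_{\lambda'}$) that the result is an honest fiber homotopy $({\rm\Delta},{\rm\Delta}'):f_{\lambda'}\times 1_I\to t$ at a suitable $\lambda'\ge\lambda_0$ whose ends are exactly $(\varphi_\lambda,\varphi'_\lambda)\circ(p_{\lambda\lambda'},p'_{\lambda\lambda'})$ and $(\psi_\lambda,\psi'_\lambda)\circ(p_{\lambda\lambda'},p'_{\lambda\lambda'})$; this gives \eqref{GrindEQ__2_20_}, and the pointwise smallness keeps the $(\alpha,\alpha')$-control, i.e. \eqref{GrindEQ__2_21_}.

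The hard part will be this last splicing-and-reparametrising step. The resolution conditions and Theorem 1.5 only deliver approximations and homotopies ``modulo a covering'', whereas \eqref{GrindEQ__2_20_} demands a homotopy with prescribed ends; correcting the ends of an approximate homotopy while keeping everything a \emph{fiber} homotopy (strictly compatible with $t$, which forces the cutoff function used for $P$ and the one used for $P'$ to be related through $f_{\lambda'}$) and without violating the $(\alpha,\alpha')$-control of \eqref{GrindEQ__2_21_} is precisely where Lemma 1.6 has to be used carefully — a uniform cutoff will not do, since $\Theta,\Theta'$ need not be uniformly continuous. The remaining difficulty is pure bookkeeping: organising the chain $\mu\le\lambda_0\le\lambda'$ of index increases and choosing the single covering $(\gamma,\gamma')$ at the outset fine enough to absorb every later estimate. (One also uses here that $f_\lambda$ and $t$ may be taken to be $\mathbf{ANR}$-maps, as they are in the intended application to condition SF2; for a general term $f_\lambda$ one first replaces it by an $\mathbf{ANR}$-map by a preliminary use of FR1.)
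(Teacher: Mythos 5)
Your overall scheme (package the homotopy via Lemma 1.4 including \eqref{GrindEQ__2_5_}, push down by FR1, recognise at a finite stage by FR2 and Theorem 1.5, then splice with a pointwise cutoff from Lemma 1.6) is the same as the paper's, but your specific choice of auxiliary target in Lemma 1.4 creates a genuine gap. You apply Lemma 1.4 with $t'=f_{\lambda}$, and later apply FR2 and Theorem 1.5 ``for the $\mathbf{ANR}$-map $f_{\lambda}$'' to morphisms $f_{\lambda_0}\to f_{\lambda}$. But the lemma is stated for an arbitrary fiber resolution: $f_{\lambda}$ is just a term of $\mathbf{f}$ and is not assumed to be an $\mathbf{ANR}$-map. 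Lemma 1.4 needs its $t':P_1\to P_1'$ to be an $\mathbf{ANR}$-map (otherwise the constructed $t''\subset t'\times t^{\#}$ is not an $\mathbf{ANR}$-map and FR1 cannot be applied to $(\sigma,\sigma'):f\to t''$), and both FR2 and Theorem 1.5 are only available when the \emph{target} is an $\mathbf{ANR}$-map. Your parenthetical fix --- ``replace $f_{\lambda}$ by an $\mathbf{ANR}$-map by a preliminary use of FR1'' --- does not work: FR1 factors morphisms \emph{out of} $f$ into a given $\mathbf{ANR}$-map approximately through the system; it does not replace terms of the system, and in particular it gives you no way to apply FR2 to the pair of morphisms $(p_{\lambda\lambda_0},p'_{\lambda\lambda_0})$ and $(\kappa,\kappa')\circ(\sigma_{\mu},\sigma'_{\mu})\circ(p_{\mu\lambda_0},p'_{\mu\lambda_0})$ whose common target is $f_{\lambda}$.

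The paper avoids this entirely by a different choice in step (i): it takes $t'=t\times t:P\times P\to P'\times P'$ (an $\mathbf{ANR}$-map whenever $t$ is), encodes the pair of end morphisms as a single morphism $\zeta(x)=(\varphi_{\lambda}p_{\lambda}(x),\psi_{\lambda}p_{\lambda}(x))$ into $t'$, and lets $\eta,\xi$ be the two projections. Then every object to which Lemma 1.4, FR1, FR2 and Theorem 1.5 are applied ($t''$, $t'$, $t$) is an $\mathbf{ANR}$-map built from $t$ alone, and no hypothesis on $f_{\lambda}$ is needed; the end-correcting homotopies $(\mathrm{K},\mathrm{K}')$ and $(\Lambda,\Lambda')$ are produced directly as $(\alpha_1,\alpha_1')$-homotopies into $t$ rather than as homotopies into $f_{\lambda}$ post-composed with $\varphi_{\lambda},\psi_{\lambda}$. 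If you rework your argument with $t'=t\times t$ and $\zeta=(\varphi_{\lambda}p_{\lambda},\psi_{\lambda}p_{\lambda})$, the remaining splicing step you describe (with the cutoff $\varphi\circ t''\circ\varepsilon\circ p_{\lambda''\lambda'}$ controlling the gluing intervals, and the chain of star-refinements $(\alpha,\alpha')<^{*}(\alpha_1,\alpha_1')<^{*}(\beta,\beta')<^{*}(\beta_1,\beta_1')$ absorbing the estimates) is exactly what the paper carries out.
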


\begin{proof}
For the resolution $\mathbf{\left(p,p'\right)}=\left\{\left(p_{\lambda } ,p'_{\lambda } \right)\; \right\}:f\to \mathbf{f}$\textbf{ }and for the morphisms $\left(\varphi _{\lambda } ,\varphi' _{\lambda } \right),\left(\psi _{\lambda } ,\psi' _{\lambda } \right):f_{\lambda } \to t$ consider the corresponding homotopy $\left({\rm \Theta },{\rm \Theta }'\right):f\times 1_{I} \to t$. i.e.
\begin{equation} \label{GrindEQ__2_22_} 
{\rm \Theta }\left({\rm x},0\right)=\varphi _{\lambda } p_{\lambda } \left(x\right),\; \; \; \ \forall x\in X,                          
\end{equation} 
\begin{equation} \label{GrindEQ__2_23_} 
{\rm \Theta }\left({\rm x},1\right)=\psi _{\lambda } p_{\lambda } \left(x\right),\; \; \; \; \forall x\in X,                          
\end{equation} 
\begin{equation} \label{GrindEQ__2_24_} 
{\rm \Theta }'\left({\rm x}',0\right)=\varphi' _{\lambda } p'_{\lambda } \left(x'\right),\; \; \;  \forall x'\in X',                      
\end{equation} 
\begin{equation} \label{GrindEQ__2_25_} 
{\rm \Theta }\left({\rm x},1\right)=\psi' _{\lambda } p'_{\lambda } \left(x'\right),\; \; \; \; \forall x'\in X'. 
\end{equation} 
Let $\left(\alpha ,\alpha '\right)$ be a pair of coverings $\alpha $ and $\alpha '$ of $P$ and $P'$, respectively.  Let $\left(\alpha _{1} ,\alpha ' _{1} \right)$ be a star-refinement of $\left(\alpha ,\alpha '\right)$. Let use the theorem 1.5 for the $\mathbf{ANR}$-map $t:P\to P'$  and for the pair $\left(\alpha _{1} ,\alpha' _{1} \right)$, and choose a pair $\left(\beta ,\beta '\right)$ of coverings $\beta $ and $\beta '$ of $P$ and $P'$, respectively, such that $\left(\beta ,\beta '\right)$  is a star-refinement of $\left(\alpha _{1} ,\alpha ' _{1} \right).$ Now use the property FR2) for the pair $\left(\beta ,\beta '\right)$ and choose a pair $\left(\beta _{1} ,\beta ' _{1} \right)$  of coverings $\beta _{1} $ and $\beta' _{1}$ of $P$ and $P'$, respectively, such that $\left(\beta _{1} ,\beta' _{1} \right)$  is a star-refinement of $\left(\beta ,\beta '\right).$ Therefore, we have
\begin{equation} \label{GrindEQ__2_26_} 
\left(\alpha ,\alpha '\right)<^{*} \left(\alpha _{1} ,\alpha' _{1} \right)<^{*} \left(\beta ,\beta '\right)<^{*} \left(\beta _{1} ,\beta' _{1} \right).              
\end{equation} 

Let $t'$ be the map $t\times t:P\times P\to P'\times P'.$ Denote by $\left(\zeta ,\zeta '\right):f\to t'$ a morphism defined by
\begin{equation} \label{GrindEQ__2_27_} 
\zeta \left(x\right)=\left(\varphi _{\lambda } p_{\lambda } \left(x\right),\psi _{\lambda } p_{\lambda } \left(x\right)\right),\; \; \; \forall x\in X, 
\end{equation} 
\begin{equation} \label{GrindEQ__2_28_} 
\zeta '\left(x'\right)=\left(\varphi ' _{\lambda } p'_{\lambda } \left(x'\right),\psi' _{\lambda } p'_{\lambda } \left(x'\right)\right),\; \; \;  \forall x'\in X'. 
\end{equation} 
Let $\left(\eta ,\eta '\right),\left(\xi ,\xi '\right):t'\to t$ be the morphisms defined by
\begin{equation} \label{GrindEQ__2_29_} 
\eta \left(y,y_{1} \right)=y,\;  \xi \left(y,y_{1} \right)=y_{1} ,\; \; \; \;  \forall \; \left(y,y_{1} \right)\in P\times P, 
\end{equation} 
\begin{equation} \label{GrindEQ__2_30_} 
\eta '\left(y',y'_{1} \right)=y',\; \xi '\left(y',y'_{1} \right)=y'_{1} ,\; \; \; \;  \forall \; \left(y',y'_{1} \right)\in P'\times P'. 
\end{equation} 
In this case we have
\begin{equation} \label{GrindEQ__2_31_} 
\left(\eta ,\eta '\right)\circ \left(\zeta ,\zeta '\right)=\left(\varphi _{\lambda } ,\varphi' _{\lambda } \right)\circ \left(p_{\lambda } ,p'_{\lambda } \right),\;  
\end{equation} 
\begin{equation} \label{GrindEQ__2_32_} 
\left(\xi ,\xi '\right)\circ \left(\zeta ,\zeta '\right)=\left(\psi _{\lambda } ,\psi' _{\lambda } \right)\circ \left(p_{\lambda } ,p'_{\lambda } \right).\;  
\end{equation} 
Indeed,
\begin{equation} \label{GrindEQ__2_33_} 
\left(\eta \circ \zeta \right)\left(x\right)=\eta \left(\varphi _{\lambda } p_{\lambda } \left(x\right),\psi _{\lambda } p_{\lambda } \left(x\right)\right)=\varphi _{\lambda } p_{\lambda } \left(x\right),\;  
\end{equation} 
\begin{equation} \label{GrindEQ__2_34_} 
\left(\eta '\circ \zeta '\right)\left(x'\right)=\eta '\left(\varphi' _{\lambda } p'_{\lambda } \left(x'\right),\psi' _{\lambda } p'_{\lambda } \left(x'\right)\right)=\varphi ' _{\lambda } p'_{\lambda } \left(x'\right),\;  
\end{equation} 
\begin{equation} \label{GrindEQ__2_35_} 
\left(\xi \circ \zeta \right)\left(x\right)=\xi \left(\varphi _{\lambda } p_{\lambda } \left(x\right),\psi _{\lambda } p_{\lambda } \left(x\right)\right)=\psi _{\lambda } p_{\lambda } \left(x\right),\;  
\end{equation} 
\begin{equation} \label{GrindEQ__2_36_} 
\left(\xi '\circ \zeta '\right)\left(x'\right)=\xi '\left(\varphi ' _{\lambda } p'_{\lambda } \left(x'\right),\psi ' _{\lambda } p'_{\lambda } \left(x'\right)\right)=\psi' _{\lambda } p'_{\lambda } \left(x'\right).\;  
\end{equation} 
On the other hand, by \eqref{GrindEQ__2_22_}, \eqref{GrindEQ__2_23_}, \eqref{GrindEQ__2_24_} and \eqref{GrindEQ__2_25_} we have
\begin{equation} \label{GrindEQ__2_37_} 
{\rm \Theta }\left({\rm x},0\right)=\left(\eta \circ \zeta \right)\left(x\right),\; \; \; \;  \forall x\in X,                          
\end{equation} 
\begin{equation} \label{GrindEQ__2_38_} 
{\rm \Theta }\left({\rm x},1\right)=\left(\xi \circ \zeta \right)\left(x\right),\; \; \; \;  \forall x\in X,                          
\end{equation} 
\begin{equation} \label{GrindEQ__2_39_} 
{\rm \Theta }'\left({\rm x}',0\right)=\left(\eta '\circ \zeta '\right)\left(x'\right),\; \; \; \; \; \; \forall x'\in X',                      
\end{equation} 
\begin{equation} \label{GrindEQ__2_40_} 
{\rm \Theta }\left({\rm x},1\right)=\left(\xi ' \circ \zeta' \right)\left(x'\right),\; \; \; \; \; \; \forall x'\in X'. 
\end{equation} 
Therefore, $\left({\rm \Theta },{\rm \Theta }'\right):f\times 1_{I} \to t$ is a homotopy which connects the morphisms $\left(\eta ,\eta '\right)\circ \left(\zeta ,\zeta '\right)$ and $\left(\xi ,\xi '\right)\circ \left(\zeta ,\zeta '\right).$ The map $t'$ is an $\mathbf{ANR}$-map and so by lemma 1.3 there exist an  $\mathbf{ANR}$-map $t'':P_{2} \to P_{2}^{'} $, a morphism $\left(\sigma ,\sigma '\right):f\to t''$,  $\left(\kappa ,\kappa '\right):t''\to t'$ and a fiber homotopy $\left({\rm \Delta },{\rm \Delta }'\right):t''\times 1_{I} \to t$ such that 
\begin{equation} \label{GrindEQ__2_41_} 
\left(\zeta ,\zeta '\right)=\left(\kappa ,\kappa '\right)\circ \left(\sigma ,\sigma '\right),                                   
\end{equation} 
\begin{equation} \label{GrindEQ__2_42_} 
\left({\rm \Delta },{\rm \Delta }'\right):\left(\xi ,\xi '\right)\circ \left(\kappa ,\kappa '\right)\cong \left(\eta ,\eta '\right)\circ \left(\kappa ,\kappa '\right),                   
\end{equation} 
\begin{equation} \label{GrindEQ__2_43_} 
\left({\rm \Theta },{\rm \Theta }'\right)=\left({\rm \Delta },{\rm \Delta }'\right)\circ \left(\sigma \times 1_{I} ,\sigma ' _{1} \times 1_{I} \right).                        
\end{equation} 
Consider the pair $\left({\rm \Delta }^{-1} \left(\beta _{1} \right),{\rm \Delta }'^{-1} (\beta _{1}^{'} )\right)$ of coverings ${\rm \Delta }^{-1} \left(\beta _{1} \right)$ and ${\rm \Delta }'^{-1} (\beta _{1}^{'} )$ of spaces $P_{2} \times I$ and $P_{2}^{'} \times I$, respectivelly. For the coverings ${\rm \Delta }^{-1} \left(\beta _{1} \right)$ and ${\rm \Delta }'^{-1} (\beta _{1}^{'} )$ choose refinements $\tilde{\gamma }$ and $\tilde{\gamma }^{'} $, which are stacked coverings of $P_{2} \times I$ and $P_{2}^{'} \times I$, such that $\tilde{\gamma }<t''^{-1} \left(\tilde{\gamma }^{'} \right).$ 

By FR1) for the $\left(\sigma ,\sigma '\right):f\to t''$ and pair $\left(\gamma ,\gamma '\right)$, there exist a $\lambda ''\ge \lambda $ and a mapping $\left(\varepsilon ,\varepsilon '\right):f_{\lambda ''} \to t''$ such that
\begin{equation} \label{GrindEQ__2_44_} 
\left(\left(\sigma ,\sigma '\right),\left(\varepsilon ,\varepsilon '\right)\circ \left(p_{\lambda ''} ,p'_{\lambda ''} \right)\right)\le \left(\gamma ,\gamma '\right).                                
\end{equation} 
Note that for any $W\in \gamma $ and $W'\in \gamma '$, there exist $J\in J_{W} $ and $J'\in J_{W'} $ , such that $W\times \left\{0\right\}\subset W\times J$,  $W'\times \left\{0\right\}\subset W'\times J'$. On the other hand, for $W\times J$  and $W'\times J'$, there exist $V\in \beta _{1} $ and $V'\in \beta' _{1} $, such that $W\times J\subset {\rm \Delta }^{-1} \left(V\right)$ and $W'\times J'\subset {\rm \Delta }'^{-1} \left(V'\right)$. Therefore, by \eqref{GrindEQ__2_42_} we have 
\begin{equation} \label{GrindEQ__2_45_} 
\left(\eta \circ \kappa \right)\left(W\right)={\rm \Delta }\left(W\times \left\{0\right\}\right)\subset {\rm \Delta }\left(W\times J\right)\subseteq V, 
\end{equation} 
\begin{equation} \label{GrindEQ__2_46_} 
\left(\eta '\circ \kappa '\right)\left(W'\right)={\rm \Delta }'\left(W'\times \left\{0\right\}\right)\subset {\rm \Delta }'\left(W'\times J'\right)\subseteq V'. 
\end{equation} 
Consequently, by \eqref{GrindEQ__2_44_} we obtain
\begin{equation} \label{GrindEQ__2_47_} 
\left(\left(\eta ,\eta '\right)\circ \left(\kappa ,\kappa '\right)\circ \left(\sigma ,\sigma '\right),\left(\eta ,\eta '\right)\circ \left(\kappa ,\kappa '\right)\circ \left(\varepsilon ,\varepsilon '\right)\circ \left(p_{\lambda ''} ,p'_{\lambda ''} \right)\right)\le \left(\beta _{1} ,\beta' _{1} \right).  
\end{equation} 
By \eqref{GrindEQ__2_41_}  and \eqref{GrindEQ__2_31_} we have
\[\left(\eta ,\eta '\right)\circ \left(\kappa ,\kappa '\right)\circ \left(\sigma ,\sigma '\right)=\left(\eta ,\eta '\right)\circ \left(\zeta ,\zeta '\right)=\] 
\begin{equation} \label{GrindEQ__2_48_} 
\left(\varphi _{\lambda } ,\varphi' _{\lambda } \right)\circ \left(p_{\lambda } ,p'_{\lambda } \right)=\left(\varphi _{\lambda } ,\varphi' _{\lambda } \right)\circ \left(p_{\lambda \lambda ''} ,p'_{\lambda \lambda ''} \right)\circ \left(p_{\lambda ''} ,p'_{\lambda ''} \right).          
\end{equation} 
Therefore, \eqref{GrindEQ__2_47_} becomes 
\begin{equation} \label{GrindEQ__2_49_} 
\left(\left(\varphi _{\lambda } ,\varphi' _{\lambda } \right)\circ \left(p_{\lambda \lambda ''} ,p'_{\lambda \lambda ''} \right)\circ \left(p_{\lambda ''} ,p'_{\lambda ''} \right),\left(\eta ,\eta '\right)\circ \left(\kappa ,\kappa '\right)\circ \left(\varepsilon ,\varepsilon '\right)\circ \left(p_{\lambda ''} ,p'_{\lambda ''} \right)\right)\le \left(\beta _{1} ,\beta' _{1} \right).   
\end{equation} 
Analogously, we obtain
\begin{equation} \label{GrindEQ__2_50_} 
\left(\left(\psi _{\lambda } ,\psi' _{\lambda } \right)\circ \left(p_{\lambda \lambda ''} ,p'_{\lambda \lambda ''} \right)\circ \left(p_{\lambda ''} ,p'_{\lambda ''} \right),\left(\xi ,\xi '\right)\circ \left(\kappa ,\kappa '\right)\circ \left(\varepsilon ,\varepsilon '\right)\circ \left(p_{\lambda ''} ,p'_{\lambda ''} \right)\right)\le \left(\beta _{1} ,\beta' _{1} \right).   
\end{equation} 
By choose of $\left(\beta _{1} ,\beta _{1}^{'} \right)$ and the property FR2), there exists a $\lambda '\ge \lambda ''$ such that 
\begin{equation} \label{GrindEQ__2_51_} 
\left(\left(\varphi _{\lambda } ,\varphi' _{\lambda } \right)\circ \left(p_{\lambda \lambda '} ,p'_{\lambda \lambda '} \right),\left(\eta ,\eta '\right)\circ \left(\kappa ,\kappa '\right)\circ \left(\varepsilon ,\varepsilon '\right)\circ \left(p_{\lambda \lambda '} ,p'_{\lambda \lambda '} \right)\right)\le \left(\beta ,\beta '\right),      
\end{equation} 
\begin{equation} \label{GrindEQ__2_52_} 
\left(\left(\psi _{\lambda } ,\psi' _{\lambda } \right)\circ \left(p_{\lambda \lambda '} ,p'_{\lambda \lambda '} \right),\left(\xi ,\xi '\right)\circ \left(\kappa ,\kappa '\right)\circ \left(\varepsilon ,\varepsilon '\right)\circ \left(p_{\lambda \lambda '} ,p'_{\lambda \lambda '} \right)\right)\le \left(\beta ,\beta '\right).       
\end{equation} 
On the other hand, by choose of  $\left(\beta ,\beta '\right)$ and theorem 1.5, there exist $\left(\alpha _{1} ,\alpha _{1}^{'} \right)-$homotopies 
\begin{equation} \label{GrindEQ__2_53_} 
\left({\rm {\rm K} },{\rm {\rm K} }'\right):\left(\varphi _{\lambda } ,\varphi' _{\lambda } \right)\circ \left(p_{\lambda \lambda '} ,p'_{\lambda \lambda '} \right)\cong \left(\eta ,\eta '\right)\circ \left(\kappa ,\kappa '\right)\circ \left(\varepsilon ,\varepsilon '\right)\circ \left(p_{\lambda \lambda '} ,p'_{\lambda \lambda '} \right),     
\end{equation} 
\begin{equation} \label{GrindEQ__2_54_} 
\left({\rm \Lambda },{\rm \Lambda }'\right):\left(\psi _{\lambda } ,\psi' _{\lambda } \right)\circ \left(p_{\lambda \lambda '} ,p'_{\lambda \lambda '} \right)\cong \left(\xi ,\xi '\right)\circ \left(\kappa ,\kappa '\right)\circ \left(\varepsilon ,\varepsilon '\right)\circ \left(p_{\lambda \lambda '} ,p'_{\lambda \lambda '} \right). 
\end{equation} 
By \eqref{GrindEQ__2_44_}, for any $t\in I$ the points $\left(\sigma \left(x\right),t\right)$ and  $\left(\varepsilon p_{\lambda ''} \left(x\right),t\right)$ belong to some member of   $\tilde{\gamma }$ and therefore, they belong to ${\rm \Delta }^{-1} \left(V\right)$, for some $V\in \beta _{1} $. In the same way, $\left(\sigma '\left(x'\right),t\right)$ and  $\left(\varepsilon 'p'_{\lambda ''} \left(x'\right),t\right)$ belong to ${\rm \Delta }^{'\; -1} \left(V'\right)$, for some $V'\in \beta _{1}^{'} $. Consequently, $\left({\rm \Delta },{\rm \Delta }'\right)\circ \left(\sigma \times 1_{I} ,\sigma _{1}^{'} \times 1_{I} \right)$ and $\left({\rm \Delta },{\rm \Delta }'\right)\circ \left(\varepsilon \circ \left(p_{\lambda ''} \times 1_{I} \right),\varepsilon '\circ \left(p'_{\lambda ''} \times 1_{I} \right)\right)$ are $\left(\beta _{1} ,\beta _{1}^{'} \right)-$near and therefore, $\left(\beta ,\beta '\right)-$near. i.e. 
\begin{equation} \label{GrindEQ__2_55_} 
\left(\left({\rm \Delta },{\rm \Delta }'\right)\circ \left(\sigma \times 1_{I} ,\sigma' _{1} \times 1_{I} \right),\left({\rm \Delta },{\rm \Delta }'\right)\circ \left(\varepsilon \circ \left(p_{\lambda ''} \times 1_{I} \right),\varepsilon '\circ \left(p'_{\lambda ''} \times 1_{I} \right)\right)\; \right)\le \left(\beta ,\beta '\right).    
\end{equation} 
Our aim is to define a homotopy $\left({\rm {\rm H} },{\rm {\rm H} }'\right):f_{\lambda '} \times 1_{I} \to t$  using the homotopies $\left({\rm {\rm K} },{\rm {\rm K} }'\right),\; \left({\rm \Lambda },{\rm \Lambda }'\right):f_{\lambda '} \times 1_{I} \to t$  and $\left({\rm \Delta },{\rm \Delta }'\right):t''\times 1_{I} \to t.$ 

For each  $W'\in \gamma '$, choose a number $0<\alpha _{W'} <\frac{1}{3} $ , which is smaller than the Lebegue number of the covering $J_{W'} $. In this case, if $\left|t-t'\right|\le \alpha _{W'} ,\; \; t,t'\in I$, then there exists $J'\in J_{W'} $, such that $t,t'\in J'$. Consequently, if $z'\in W'$ and $\left|t-t'\right|\le \alpha _{W'} $, then $\left(z',t\right),\left(z',t'\right)\in W'\times J'\in \tilde{\gamma }^{'} $ and therefore, ${\rm \Delta }'\left(z',t\right)$ and ${\rm \Delta }'\left(z',t'\right)$ belong to some $U'\in \beta _{1}^{'} .$ Now apply lemma 2.6 and we obtain a continuous function $\varphi :P_{1}^{''} \to I$ such that for every $z\in P_{2}^{'} $ there is $W'\in \gamma '$ and 
\begin{equation} \label{GrindEQ__2_57_} 
z\in W',\; \; 0<\varphi \left(z\right)\le \alpha _{W'} \le \frac{1}{3} .                              
\end{equation} 
Let  ${\rm {\rm H} }:X_{\lambda '} \times I\to P$ and ${\rm {\rm H} }':X_{\lambda '}^{'} \times I\to P'$ are defined by
 
\begin{equation} \label{GrindEQ__2_58_} 
H(x,t)=\begin{cases} 
{\rm K} \left(  x, \frac{t}{\varphi t''\epsilon p_{\lambda '' \lambda '}(x)} \right), & 0\le t \le \varphi t'' \epsilon p_{\lambda '' \lambda '}(x) \\ 
\Delta \left( \epsilon p_{\lambda '' \lambda '}(x), \frac{1-\varphi t''\epsilon p_{\lambda '' \lambda '}(x)}{1-2\varphi t''\epsilon p_{\lambda '' \lambda '}(x)} \right), & \varphi t''\epsilon p_{\lambda '' \lambda '}(x) \le t \le 1- \varphi t'' \epsilon p_{\lambda '' \lambda '}(x) \\ 
\Lambda \left(  x, \frac{1-t}{\varphi t''\epsilon p_{\lambda '' \lambda '}(x)} \right), & 1- \varphi t''\epsilon p_{\lambda '' \lambda '}(x) \le t \le 1
\end{cases}                         
\end{equation}

\begin{equation} \label{GrindEQ__2_59_} 
H'(x',t)=\begin{cases} 
{\rm K'} \left(  x', \frac{t}{\varphi t''\epsilon ' p'_{\lambda '' \lambda '}(x')} \right), & 0\le t \le \varphi t'' \epsilon ' p'_{\lambda '' \lambda '}(x') \\ 
\Delta ' \left( \epsilon ' p'_{\lambda '' \lambda '}(x'), \frac{1-\varphi t''\epsilon ' p'_{\lambda '' \lambda '}(x')}{1-2\varphi t''\epsilon ' p'_{\lambda '' \lambda '}(x')} \right), & \varphi t''\epsilon' p'_{\lambda '' \lambda '}(x) \le t \le 1- \varphi t'' \epsilon' p'_{\lambda '' \lambda '}(x') \\ 
\Lambda ' \left(  x', \frac{1-t}{\varphi t''\epsilon ' p'_{\lambda '' \lambda '}(x)} \right), & 1- \varphi t''\epsilon ' p'_{\lambda '' \lambda '}(x') \le t \le 1
\end{cases}                             
\end{equation}   

Let show that the following diagram is commutative:
\begin{equation} \label{GrindEQ__2_60_}
\begin{tikzpicture}
\node[left] at (-1,0 ){$X_{\lambda}\times I$};
\node[right] at (1,0 ){$X'_{\lambda'}\times I$};
\node[left] at (-1.5,-2 ){$P$};
\node[right] at (1.5,-2 ){$P'.$};
\draw [thick, ->] (-1,0) to node[above] {$f_\lambda \times 1_I$} (1,0);
\draw [thick, ->] (-1,-2) to node[above] {$t$} (1,-2);
\draw [thick, ->] (-1.7,-0.5) to node[right] {$H$} (-1.7,-1.5);
\draw [thick, ->] (1.7,-0.5) to node[right] {$H'$} (1.7,-1.5);
\end{tikzpicture}
\end{equation} 
Consider three different cases:

\begin{enumerate}
\item  Let  $x\in X_{\lambda '} $  and $\; 0\le t\le \varphi t''\varepsilon p_{\lambda ''\lambda '} \left(x\right)$, then
\[\left({\rm t}\circ {\rm {\rm H} }\right)\left(x,t\right)=\left({\rm t}\circ {\rm {\rm K} }\right)\left(x,\frac{{\rm t}}{\varphi t''\varepsilon p_{\lambda ''\lambda '} \left(x\right)} \right)=\left({\rm {\rm K} }'\circ \left(f_{\lambda '} \times 1_{I} \right)\right)\left(x,\frac{{\rm t}}{\varphi t''\varepsilon p_{\lambda ''\lambda '} \left(x\right)} \right)=\] 
\begin{equation} \label{GrindEQ__2_61_} 
{\rm {\rm K} }'\left(f_{\lambda '} \left(x\right),\frac{{\rm t}}{\varphi t''\varepsilon p_{\lambda ''\lambda '} \left(x\right)} \right)={\rm {\rm H} }\left(f_{\lambda '} \left(x\right),{\rm t}\right)=\left({\rm {\rm H} }'\circ \left(f_{\lambda '} \times 1_{I} \right)\right)\left(x,t\right). 
\end{equation} 

\item  Let  $x\in X_{\lambda '} $  and $\; \varphi t''\varepsilon p_{\lambda ''\lambda '} \left(x\right)\le t\le 1-\varphi t''\varepsilon p_{\lambda ''\lambda '} \left(x\right)$, then
\[\left({\rm t}\circ {\rm {\rm H} }\right)\left(x,t\right)=\left({\rm t}\circ {\rm \Delta }\right)\left(\varepsilon p_{\lambda ''\lambda '} \left(x\right),\frac{1-\varphi t''\varepsilon p_{\lambda ''\lambda '} \left(x\right)}{1-2\varphi t''\varepsilon p_{\lambda ''\lambda '} \left(x\right)} \right)=\] 
\[\left({\rm \Delta }'\circ \left(t''\times 1_{I} \right)\right)\left(\varepsilon p_{\lambda ''\lambda '} \left(x\right),\frac{1-\varphi t''\varepsilon p_{\lambda ''\lambda '} \left(x\right)}{1-2\varphi t''\varepsilon p_{\lambda ''\lambda '} \left(x\right)} \right)=\] 
\[{\rm \Delta }'\left(t''\varepsilon p_{\lambda ''\lambda '} \left(x\right),\frac{1-\varphi t''\varepsilon p_{\lambda ''\lambda '} \left(x\right)}{1-2\varphi t''\varepsilon p_{\lambda ''\lambda '} \left(x\right)} \right)={\rm \Delta }'\left(\varepsilon 'p'_{\lambda ''\lambda '} f_{\lambda '} \left(x\right),\frac{1-\varphi t''\varepsilon p_{\lambda ''\lambda '} \left(x\right)}{1-2\varphi t''\varepsilon p_{\lambda ''\lambda '} \left(x\right)} \right)=\] 
\begin{equation} \label{GrindEQ__2_62_} 
={\rm H'} \left(f_{\lambda '} (x),t \right) =\left({\rm H'} \circ f_{\lambda ' \times 1_I} \right) (x,t).
\end{equation} 

\item  Let  $x\in X_{\lambda '} $  and $\; 1-\varphi \varepsilon 'p'_{\lambda ''\lambda '} \left(x'\right)\le t\le 1$, then
\[(t \circ H)(x,t)=\left(t \circ \Lambda \right) \left( x, \frac{1-t}{ \varphi t''\varepsilon p_{\lambda ''\lambda '}} \right) = \left(\Lambda' \circ \left(f_{\lambda'} \times 1_I\right) \right) \left( x, \frac{1-t}{ \varphi t''\varepsilon p_{\lambda ''\lambda '}} \right) =\] 
\begin{equation} \label{GrindEQ__2_63_} 
{\rm \Lambda }'\left(f_{\lambda '} \left(x\right),\frac{1-{\rm t}}{\varphi \varepsilon p_{\lambda ''\lambda '} \left(x\right)} \right)={\rm {\rm H} }'\left(f_{\lambda '} \left(x\right),{\rm t}\right)=\left({\rm {\rm H} }'\circ \left(f_{\lambda '} \times 1_{I} \right)\right)\left(x,t\right). 
\end{equation} 
\end{enumerate}
Therefore, the pair $\left({\rm {\rm H} },{\rm {\rm H} }'\right):f_{\lambda '} \times 1_{I} \to t$ is a well-defined homotopy between $\left(\varphi_{\lambda} , \varphi'_{\lambda}\right) \circ \left(\varphi_{\lambda \lambda ''} , \varphi'_{\lambda \lambda ''}\right)$ and $\left(\psi_{\lambda} , \psi'_{\lambda}\right) \circ \left(\psi_{\lambda \lambda ''} , \psi'_{\lambda \lambda ''}\right).$

By \cite{18} we have that 
\begin{equation} \label{GrindEQ__2_64_} 
\left({\rm \Theta }',{\rm {\rm H} }'\circ \left(p'_{\lambda '} \times 1_{I} \right)\right)\le \alpha '. 
\end{equation} 
Therefore, it remains to show that
\begin{equation} \label{GrindEQ__2_65_} 
\left({\rm \Theta }',{\rm {\rm H} }\circ \left(p_{\lambda '} \times 1_{I} \right)\right)\le \alpha . 
\end{equation} 
So we have to show that for every $\left(x,t\right)\in X\times I$, there exists a $U\in \alpha $ such that 
\begin{equation} \label{GrindEQ__2_66_} 
{\rm \Theta }'(x,t),{\rm {\rm H} }\left(p_{\lambda '}(x),t \right)) \in U.  
\end{equation} 
Let $\varphi t''\varepsilon p_{\lambda ''} \left(x\right)\le t\le 1-\varphi t''\varepsilon p_{\lambda ''} \left(x\right)$, then by \eqref{GrindEQ__2_58_} and \eqref{GrindEQ__2_43_} we have
\begin{equation} \label{GrindEQ__2_67_} 
{\rm \Theta }\left(x,t\right)={\rm \Delta }\left(\sigma \left(x\right),t\right). 
\end{equation} 
\[ {\rm H }(p_{\lambda'}(x),t)={\rm \Delta }\left(\varepsilon p_{\lambda '' \lambda '} \left(p_{\lambda '}(x)\right),\frac{1-\varphi t''\varepsilon p_{\lambda '' \lambda '} \left(p_{\lambda '}(x)\right)}{1-2\varphi t''\varepsilon p_{\lambda '' \lambda '} \left(p_{\lambda '}(x)\right)} \right)=\]
\begin{equation} \label{GrindEQ__2_68_} 
{\rm \Delta }\left(\varepsilon p_{\lambda ''} \left(x\right),\frac{1-\varphi t''\varepsilon p_{\lambda ''} \left(x\right)}{1-2\varphi t''\varepsilon p_{\lambda '' } \left(x\right)} \right). 
\end{equation} 
By \eqref{GrindEQ__2_57_}, for the $z'=t'' \varepsilon p_{\lambda ''} \left(x\right),$ there is a $W'\in \gamma '$ such that
\begin{equation} \label{GrindEQ__2_69_} 
z'\in W',\; \; 0<\varphi \left( t''\varepsilon p_{\lambda ''} \left(x\right) \right)\le \alpha _{W'}  \le \frac{1}{3}.                             
\end{equation} 
Note that $\varphi t''\varepsilon p_{\lambda ''} \left(x\right)\le t\le 1-\varphi t''\varepsilon p_{\lambda ''} \left(x\right)$ implies that  $\left|1-2t\right|\le 1-2\varphi t''\varepsilon p_{\lambda '} \left(y\right)$ and so
\begin{equation} \label{GrindEQ__2_70_} 
\left|t- \frac{1-\varphi t''\varepsilon p_{\lambda ''} \left(x\right)}{1-2\varphi t''\varepsilon p_{\lambda '' } \left(x\right)}\right|\le \varphi t''\varepsilon p_{\lambda ''}(x) \le \alpha_{W'}.                            
\end{equation} 
Therefore, there exists a $J'\in J_{W'} $ such that
\begin{equation} \label{GrindEQ__2_71_} 
\left(t''\varepsilon p_{\lambda ''} \left(x\right),t\right)\left(t''\varepsilon p_{\lambda ''} \left(x\right),\frac{1-\varphi t''\varepsilon p_{\lambda ''} \left(x\right)}{1-2\varphi t''\varepsilon p_{\lambda '' } \left(x\right)}\right)\in W'\times J'. 
\end{equation} 
By the choose of $\tilde{\gamma }^{'} $, for each $W' \times J' \in \tilde{\gamma }^{'} $ there exists a $W\times J\in \tilde{\gamma }$ such that $t''^{-1} \left(W'\times J'\right)\subset W\times J$ and so 
\begin{equation} \label{GrindEQ__2_72_} 
\left(\varepsilon p_{\lambda ''} \left(x\right),t\right),\left(\varepsilon p_{\lambda ''} \left(x\right),\frac{1-\varphi \varepsilon p_{\lambda ''} \left(x\right)}{1-2\varphi \varepsilon p_{\lambda '} \left(x\right)} \right)\in W \times J. 
\end{equation} 
On the other hand, for $W\times J\in \tilde{\gamma }$ there is a $V_{1} \in \beta _{1} $, such that $W \times J \in \Delta ^{-1}(V_1), $ and by \eqref{GrindEQ__2_72_} we have
\begin{equation} \label{GrindEQ__2_73_} 
{\rm \Delta }\left(\varepsilon p_{\lambda ''} \left(x\right),t\right),{\rm \Delta }\left(\varepsilon p_{\lambda ''} \left(x\right),\frac{1-\varphi \varepsilon p_{\lambda ''} \left(x\right)}{1-2\varphi \varepsilon p_{\lambda '} \left(x\right)} \right)\in V_{1} . 
\end{equation} 
Note that by \eqref{GrindEQ__2_44_} there exists a $W_1 \times J \in \gamma$ such that
\begin{equation} \label{GrindEQ__2_74_} 
\sigma \left(x\right),\varepsilon p_{\lambda ''} \left(x\right)\in W_{1} . 
\end{equation} 
Therefore, $\left(\sigma \left(x\right),t\right)$ and $\left( \varepsilon p_{\lambda''}(x),t\right)$ belong to some $W_{1} \times J\in \gamma $ and so there is a $V_{2} \in \beta _{1} $, such that 
\begin{equation} \label{GrindEQ__2_75_} 
{\rm \Delta }\left(\sigma \left(x\right),t\right),{\rm \Delta }\left(\varepsilon p_{\lambda ''} \left(x\right),t \right) \in V_2. 
\end{equation}
Note that $\beta _{1} $ is a star-refinement of $\beta$. On the other hand, $\beta $ is a star-refinement of $\alpha $ and so by \eqref{GrindEQ__2_72_} there is a $U \in \alpha$ such that
\begin{equation} \label{GrindEQ__2_76_} 
{\rm \Delta }\left(\sigma \left(x\right),t\right),{\rm \Delta }\left(\varepsilon p_{\lambda ''} \left(x\right),\frac{1-\varphi \varepsilon p_{\lambda ''} \left(x\right)}{1-2\varphi \varepsilon p_{\lambda '} \left(x\right)} \right)\in U. 
\end{equation} 
On the other hand, by \eqref{GrindEQ__2_67_} and \eqref{GrindEQ__2_68_} we have
\begin{equation} \label{GrindEQ__2_77_} 
{\rm \Theta }\left(x,t\right),{\rm {\rm H} }\left(p_{\lambda ''} \left(x\right),t\right)\in U. 
\end{equation} 
So, if $ \varphi t ''p_{\lambda''}(x) \le 1- \varphi t ''p_{\lambda''}(x)$ then \eqref{GrindEQ__2_65_} is fulfilled. 

Now consider the case when $0 \le t \le 1-\varphi t''p_{\lambda''}(x)$ and as before, for the $z'=\varepsilon p_{\lambda ''} \left(x\right)\in P_{1}^{''} $ choose $W' \in \gamma '$  such that
\begin{equation} \label{GrindEQ__2_78_} 
z'\in W',\; \; 0<\varphi t''\varepsilon p_{\lambda ''} \left(x\right)\le \alpha _{W'} \le \frac{1}{3}.                              
\end{equation} 
In this case $\left| t-0\right| \le \varphi t''p_{\lambda''}(x)$ and so there exists $J' \in J_W' $ such that
\begin{equation} \label{GrindEQ__2_79_} 
\left(t''\varepsilon p_{\lambda ''} \left(x\right),0\right)\left(t''\varepsilon p_{\lambda ''} \left(x\right),{\rm t}\right)\in W'\times J'. 
\end{equation} 
Since $t''^{-1}(\gamma) \ge \tilde{\gamma}$, there is a $W\times J\in \tilde{\gamma }$ such that
\begin{equation} \label{GrindEQ__2_80_} 
\left(\varepsilon p_{\lambda ''} \left(x\right),0\right)\left(\varepsilon p_{\lambda ''} \left(x\right),{\rm t}\right)\in W\times J.  
\end{equation} 
Therefore, there exists a  $V_{1} \in \beta _{1} $, such that 
\begin{equation} \label{GrindEQ__2_81_} 
{\rm \Delta }\left(\varepsilon p_{\lambda ''} \left(x\right),0\right),{\rm \Delta }\left(\varepsilon p_{\lambda ''} \left(x\right),{\rm t}\right)\in V_{1} . 
\end{equation} 
On the other hand, by \eqref{GrindEQ__2_74_} for each  $t \in I$ there is a $W_{1} \times J\in \tilde{\gamma }$ such that
\begin{equation} \label{GrindEQ__2_82_} 
{\rm \Delta }\left(\sigma \left(x\right),t \right),{\rm \Delta }\left(\varepsilon p_{\lambda ''} \left(x\right), t \right)\in V_{2} .
\end{equation} 
Let $t=0$, then \eqref{GrindEQ__2_82_} becomes 
\begin{equation} \label{GrindEQ__2_83_} 
{\rm \Delta }\left(\sigma \left(x\right),0 \right),{\rm \Delta }\left(\varepsilon p_{\lambda ''} \left(x\right), 0 \right)\in V'_{2} .  
\end{equation} 
In this case, in the same way as before, because  $\beta _{1} $ is a star-refinement of $\beta$ and  $\beta $ is a star-refinement of $\alpha,$  there is a $U_1 \in \alpha_1$ such that
\begin{equation} \label{GrindEQ__2_84_} 
{\rm \Delta }\left(x,0\right),{\rm \Delta }\left(x,{\rm t}\right)\in U_{1} . 
\end{equation} 
On the other hand, by \eqref{GrindEQ__2_67_} and \eqref{GrindEQ__2_68_} we have that ${\rm K}:X_{\lambda '} \times I \to P$ is an $\alpha -$homotopy and so for $y=p_{\lambda '}(x)$ there is a $U_{2} \in \alpha $ such that
\begin{equation} \label{GrindEQ__2_85_} 
{\rm K }\left(p_{\lambda '} \left(x\right),0\right),{\rm K }\left( p_{\lambda '} \left(x\right),\frac{t}{\varphi t'' \varepsilon p_{\lambda ''} \left(x\right)} \right)\in U_2. 
\end{equation} 
By \eqref{GrindEQ__2_53_}, \eqref{GrindEQ__2_34_}, \eqref{GrindEQ__2_41_}, \eqref{GrindEQ__2_43_} we have
 \[{\rm K }\left(p_{\lambda '} \left(x\right),0\right)=\left(\varphi_\lambda \circ p_{\lambda \lambda'}\right) \left(p_\lambda(x)\right)= \left(\varphi_\lambda \circ p_{\lambda}\right) \left((x)\right)=\left( \eta \circ \zeta\right)(x)=\]
\begin{equation} \label{GrindEQ__2_86_} 
\left(\eta \circ \kappa \circ \sigma\right)(x)=\left(\eta  \circ \kappa\right)(x)=\Delta \left(\sigma(x),0\right)=\Theta(x,0).
\end{equation}
Furthermore, by \eqref{GrindEQ__2_57_} 
\begin{equation} \label{GrindEQ__2_87_} 
{\rm K }\left( p_{\lambda '} \left(x\right),\frac{t}{\varphi t'' \varepsilon p_{\lambda ''} \left(x\right)} \right)={\rm H}\left(p_{\lambda'}(x),t\right).
\end{equation} 
Therefore, we have 
\begin{equation} \label{GrindEQ__2_88_} 
{\rm \Theta }\left(x,0\right),{\rm {\rm H} }\left(p_{\lambda '} \left(x\right),t\right)\in U_{2} . 
\end{equation} 
Since $\alpha_1$ is a star-refinement of $\alpha $, by \eqref{GrindEQ__2_84_} and \eqref{GrindEQ__2_88_}, there is a $U \in \alpha$ such that
\begin{equation} \label{GrindEQ__2_89_} 
{\rm \Theta }\left(x,0\right),{\rm {\rm H} }\left(p_{\lambda '} \left(x\right),t\right)\in U. 
\end{equation} 
Therefore, \eqref{GrindEQ__2_65_} is fulfilled.

The proof of the last case when $1-\varphi t'' \varepsilon p_{\lambda'' \lambda'}(x) \le t \le 1$ is analogous to the second case.  
\end{proof}

\begin{theorem}
 Every fiber resolution $\mathbf { \left(p,p'\right)}=\left\{\left(p_{\lambda } ,p'_{\lambda } \right)\; \right\}:f\to \mathbf f$\textbf{ }of a map $f:X\to X'$ is a strong fiber expansion of $f.$
\end{theorem}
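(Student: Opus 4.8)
The plan is to verify, for an arbitrary $\mathbf{ANR}$-map $t:P\to P'$, conditions SF1 and SF2 of Definition 1.2, combining the defining properties FR1, FR2 of a fiber resolution with Theorem 1.5 and Lemma 1.7. Condition SF1 is immediate: given a morphism $(\varphi,\varphi'):f\to t$, fix any pair $(\alpha,\alpha')$ of coverings of $P$ and $P'$; by FR1 there are $\lambda\in\Lambda$ and a morphism $(\varphi_\lambda,\varphi'_\lambda):f_\lambda\to t$ with $(\varphi_\lambda,\varphi'_\lambda)\circ(p_\lambda,p'_\lambda)$ and $(\varphi,\varphi')$ being $(\alpha,\alpha')$-near, and since $t$ is an $\mathbf{ANR}$-map Theorem 1.5 yields a homotopy $(\varphi,\varphi')\cong(\varphi_\lambda,\varphi'_\lambda)\circ(p_\lambda,p'_\lambda)$, i.e.\ \eqref{GrindEQ__2_1_}.

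For SF2, let $(\varphi_\lambda,\varphi'_\lambda),(\psi_\lambda,\psi'_\lambda):f_\lambda\to t$ be morphisms whose composites with $(p_\lambda,p'_\lambda)$ are connected by a fiber homotopy $(\Theta,\Theta'):f\times 1_I\to t$. First I would fix a pair $(\alpha,\alpha')$ of coverings of $P$ and $P'$, whose required fineness is pinned down only at the end, and apply Lemma 1.7 to $(\Theta,\Theta')$ and $(\alpha,\alpha')$. This produces $\lambda'\ge\lambda$ and a fiber homotopy $(\Delta,\Delta'):f_{\lambda'}\times 1_I\to t$ connecting $(\varphi_\lambda,\varphi'_\lambda)\circ(p_{\lambda\lambda'},p'_{\lambda\lambda'})$ with $(\psi_\lambda,\psi'_\lambda)\circ(p_{\lambda\lambda'},p'_{\lambda\lambda'})$ and such that
\[\bigl((\Theta,\Theta'),\,(\Delta,\Delta')\circ(p_{\lambda'}\times 1_I,\,p'_{\lambda'}\times 1_I)\bigr)\le(\alpha,\alpha').\]
This $(\Delta,\Delta')$ is the homotopy required by SF2. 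Moreover, evaluating at the two endpoints of $I$ and using $p_{\lambda\lambda'}\circ p_{\lambda'}=p_\lambda$ together with \eqref{GrindEQ__2_22_}--\eqref{GrindEQ__2_25_}, one checks that $(\Theta,\Theta')$ and $(\Delta,\Delta')\circ(p_{\lambda'}\times 1_I,\,p'_{\lambda'}\times 1_I)$ do not merely lie $(\alpha,\alpha')$-near one another but in fact coincide on the closed submap $f\times 1_{\partial I}$.

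It then remains to connect these two fiber homotopies --- which are $(\alpha,\alpha')$-near and agree on $f\times 1_{\partial I}$ --- by a fiber homotopy $(\Gamma,\Gamma'):f\times 1_I\times 1_I\to t$ fixed on $f\times 1_{\partial I}$. For this I would use the relative version of Theorem 1.5 in which the fixed data is a closed submap rather than a single point: for an $\mathbf{ANR}$-map $t$ and every pair of coverings of $P$, $P'$ there is a refinement $(\beta,\beta')$ such that any two $(\beta,\beta')$-near morphisms into $t$ which agree on a closed submap of the domain are joined by a homotopy stationary on that submap. This strengthening is obtained by the same argument that proves Theorem 1.5 and Lemma~1 of \cite{17} --- the canonical short track inside the $\mathbf{ANR}$-map joining two near morphisms is constant wherever those morphisms already agree --- and is exactly the fiber analogue of the main lemma on strong expansions of \cite{17}. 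Choosing $(\alpha,\alpha')$ in the previous paragraph to be the pair $(\beta,\beta')$ supplied by this relative statement for an arbitrary pair of coverings, and applying the statement to the closed submap $f\times 1_{\partial I}$ of $f\times 1_I$, produces the required $(\Gamma,\Gamma')$ and finishes the verification of SF2.

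The main obstacle is precisely this last step. Lemma 1.7 only guarantees $(\alpha,\alpha')$-nearness of $(\Theta,\Theta')$ and the lifted homotopy, not a controlled homotopy between them, while Theorem 1.5 as stated produces homotopies that stay fixed at a single point only, whereas SF2 demands a homotopy rel the whole submap $f\times 1_{\partial I}$. Hence the real work is to push the homotopy-extension / canonical-track argument of \cite{7}, \cite{17} through for maps of $\mathbf{ANR}$'s rather than single $\mathbf{ANR}$'s, with a closed submap held fixed; once that ``rel a closed submap'' enhancement of Theorem 1.5 is available, its combination with Lemma 1.7 along the lines above is routine.
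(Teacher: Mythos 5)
Your proposal is correct and follows essentially the same route as the paper: SF1 is obtained from FR1 together with Theorem 1.5, and SF2 by first applying Lemma 1.7 with the refinement $(\beta,\beta')$ supplied by Theorem 1.5 and then using Theorem 1.5 again to join $(\Theta,\Theta')$ to the lifted homotopy. The ``rel a closed submap'' enhancement you single out as the main remaining obstacle is already contained in Theorem 1.5 as stated: its final clause makes the connecting homotopy constant at every point where the two morphisms agree, and since they agree on all of $X\times\partial I$ the homotopy is automatically stationary on the submap $f\times 1_{\partial I}$, so no further work is required.
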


\begin{proof}
 First, let prove the FS1) property. Let $t:P\to P'$ be an $\mathbf{ANR}$-map and let $\left(\varphi ,\varphi '\right):f \to t$ be a morphism. Consider a pair $(\alpha, \alpha')$ of $\alpha \in Cov\left(P\right)$ and $\alpha '\in Cov\left(P'\right)$, such that every $(\alpha, \alpha')$-near morphism into $t$ is homotopic. By the condition FR1) there exist a $\lambda \in \Lambda $ and a morphism $\left(\varphi _{\lambda } ,\varphi' _{\lambda } \right):f_{\lambda } \to t$ such that $\left(\varphi_\lambda  ,\varphi'_\lambda \right) \circ (p_\lambda , p'_\lambda )$ and $\left(\varphi ,\varphi '\right)$ are $\left(\alpha ,\alpha '\right)$-near and consequently, homotopic, i.e.
\begin{equation} \label{GrindEQ__2_90_} 
\left(\varphi ,\varphi '\right) \cong \left(\varphi_\lambda  ,\varphi'_\lambda \right) \circ (p_\lambda , p'_\lambda ).
\end{equation} 

Now we will prove the FS2). Let $t:P\to P'$ be an $\mathbf{ANR}$-map, $\lambda \in \Lambda $ and $\left(\varphi ,\varphi '\right), \left(\psi ,\psi '\right):f_\lambda \to t$ be morphisms such that $\left(\varphi _{\lambda } ,\varphi' _{\lambda } \right)\circ \left(p_{\lambda } ,p'_{\lambda } \right)$ and $\left(\psi _{\lambda } ,\psi' _{\lambda } \right)\circ \left(p_{\lambda } ,p'_{\lambda } \right)$ are connected by a fiber homotopy $\left({\rm \Theta },{\rm \Theta }'\right):f\times 1_{I} \to t$. Let $\left(\alpha ,\alpha '\right)$ be a pair of coverings $\alpha \in Cov\left(P\right)$ and $\alpha' \in Cov\left(P' \right)$. By theorem 1.2 there exist a pair $\left( \beta , \beta'\right)$ of coverings $\beta \in Cov\left(P\right)$ and $\beta '\in Cov\left(P'\right),$ such that each $\left(\beta ,\beta '\right)$-near morphism into $t:P\to P'$ is homotopic. If we use lemma 2.4 for the pair $\left(\beta ,\beta '\right)$, then we obtain that there exist a $\lambda '\ge \lambda $ and homotopy $\left({\rm {\rm H} },{\rm {\rm H} }'\right):f\times 1_{I} \to t$ such that
\begin{equation} \label{GrindEQ__2_91_} 
\left({\rm H },{\rm H ' }\right):\left(\varphi_\lambda  ,\varphi'_\lambda \right) \circ (p_\lambda , p'_\lambda ) \cong \left(\psi_\lambda  ,\psi'_\lambda \right) \circ (p_\lambda , p'_\lambda ) 
\end{equation} 
\begin{equation} \label{GrindEQ__2_92_} 
\left( \left(\Theta,\Theta \right),\left({\rm H}, {\rm H'}\right) \circ \left( p_\lambda \times 1_I, p'_\lambda \times 1_I \right) \right). 
\end{equation} 

Now, if we use theorem 1.2 and instead of map $f:X\to Y$ consider the map $t:P \to P' $ and instead of morphisms $\left(\varphi ^{1} ,\psi ^{1} \right),\left(\varphi ^{2} ,\psi ^{2} \right):g\to f$ consider the morphisms $\left({\rm \Theta },{\rm \Theta }'\right),\; \left({\rm {\rm H} },{\rm {\rm H} }'\right)\circ \left(p_{\lambda } \times 1_{I} ,p'_{\lambda } \times 1_{I} \right):f\times 1_{I} \to t,$  then we obtain that there exists a homotopy which connects given morphisms. Moreover, restriction of morphisms $\left({\rm \Theta },{\rm \Theta }'\right)$ and $\left({\rm {\rm H} },{\rm {\rm H} }'\right)\circ \left(p_{\lambda } \times 1_{I} ,p'_{\lambda } \times 1_{I} \right)$ on the submap  $f\times 1_{\partial I} :X\times \partial I\to X'\times \partial I$  coincides and so the obtained homotopy is a homotopy which is fixed on the submap  $f\times 1_{\partial I} $ .
\end{proof}

\section{Main theorem on strong fiber expansion}
\

Let $\left(\varphi _{\lambda } ,\varphi' _{\lambda } \right),\left(\psi _{\lambda } ,\psi' _{\lambda } \right):f_{\lambda } \to t$ be morphisms as in the SF2). Consider the new  morphism $\left(\sigma _{\lambda } ,\sigma' _{\lambda } \right):f_{\lambda } \times 1_{\partial I} \to t$ defined by the following formula:

\begin{equation} \label{GrindEQ__3_1_} 
\left( {{\sigma }_{\lambda }}_{|{{X}_{\lambda }}\times \left\{ 0 \right\}},\sigma' {{_{\lambda }}_{|{{X}_{\lambda }}'\times \left\{ 0 \right\}}} \right)=\left( {{\sigma }_{\lambda }},\sigma' _{\lambda } \right)\circ \left( {{i}_{{{X}_{\lambda }}\times \left\{ 0 \right\}}},{{i}_{{{X}_{\lambda }}'\times \left\{ 0 \right\}}} \right)=\left( {{\varphi }_{\lambda }},\varphi' _{\lambda } \right),          
\end{equation} 
\begin{equation} \label{GrindEQ__3_2_} 
\left( {{\sigma }_{\lambda }}_{|{{X}_{\lambda }}\times \left\{ 1 \right\}},\sigma' {{_{\lambda }}_{|{{X}_{\lambda }}'\times \left\{ 1 \right\}}} \right)=\left( {{\sigma }_{\lambda }},\sigma' _{\lambda } \right)\circ \left( {{i}_{{{X}_{\lambda }}\times \left\{ 1 \right\}}},{{i}_{{{X}_{\lambda }}'\times \left\{ 1 \right\}}} \right)=\left( {{\psi }_{\lambda }},\psi' _{\lambda } \right),          
\end{equation} 
where $\left(i_{X_{\lambda } \times \left\{0\right\}} ,i_{X_{\lambda } '\times \left\{0\right\}} \right):f_{\lambda } \times 1_{\left\{0\right\}} \to f_{\lambda } \times 1_{\partial I} $ is an inclusion. In this case, the condition SF2) can be formulated in the following way: 

If $\left(\sigma _{\lambda } ,\sigma' _{\lambda } \right):f_{\lambda } \times 1_{\partial I} \to t$ is a morphism and $\left({\rm \Theta },{\rm \Theta }'\right):f\times 1_{I} \to t$ is a fiber homotopy such that 
\[ \left( \Theta_{|X \times \partial I}, \Theta'_{|X' \times \partial I} \right)=\left( \Theta, \Theta' \right) \circ \left( i_{|X \times \partial I}, i'_{|X' \times \partial I}   \right)=\] 
\begin{equation} \label{GrindEQ__3_3_} 
\left(\sigma _{\lambda } ,\sigma' _{\lambda } \right) \circ \left(p_{\lambda } \times 1_{\partial I} ,p'_{\lambda } \times 1_{\partial I} \right),                                
\end{equation} 
then there exist $\lambda '\ge \lambda $ and fiber homotopies $\left({\rm \Delta },{\rm \Delta }'\right):f_{\lambda } \times 1_{I} \to t$ and $\left({\rm \Gamma },{\rm \Gamma }'\right):f_{\lambda } \times 1_{I} \times 1_{I} \to t$ such that
\[ \left( \Delta_{|X_\lambda' \times \partial I}, \Delta'_{|X'_{\lambda'} \times \partial I} \right)=\left( \Delta, \Delta' \right) \circ \left( i_{|X_\lambda \times \partial I}, i'_{|X'_{\lambda'} \times \partial I}   \right)=\] 
\begin{equation} \label{GrindEQ__3_4_} 
\left(\sigma _{\lambda } ,\sigma' _{\lambda } \right) \circ \left(p_{\lambda \lambda' } \times 1_{\partial I} ,p'_{\lambda \lambda' } \times 1_{\partial I} \right),                               
\end{equation} 
and the homotopy $\left({\rm \Gamma },{\rm \Gamma }'\right)$ connects $\left({\rm \Theta },{\rm \Theta }'\right)$ and $\left({\rm \Delta },{\rm \Delta }'\right)\circ \left(p_{\lambda } \times 1_{I} ,p'_{\lambda } \times 1_{I} \right)$ and is fixed on the submap  $f\times 1_{\partial I} :X\times 1_{\partial I} \to X'\times 1_{\partial I} $.

Let $t:P\to P'$ be a continuous map and $Y$ be a topological space. Consider the map
\begin{equation} \label{GrindEQ__3_5_} 
t^{\# } :C\left(I,P\right)\to C\left(I,P'\right) 
\end{equation} 
which is defined by the formula
\begin{equation} \label{GrindEQ__3_6_} 
t''(h)(y)=(t \circ h)(y). \; \; \; \forall h \in C(I,P), \; y \in Y.
\end{equation} 

Let $f:X\to X'$ be any continuous map and $\left(\varphi ,\varphi '\right):f\to t^{\# } $  be any morphism, then we can define the {morphism $\left(\bar{\varphi },\bar{\varphi }^{'} \right):f\times 1_{Y} \to t$ by the following
\begin{equation} \label{GrindEQ__3_7_} 
\left(\bar{\varphi }\left(x,y\right),\bar{\varphi }' \left(x',y\right)\right)=\left(\varphi \left(x\right)\left(y\right),\varphi '\left(x'\right)\left(y\right)\right),\; \; \; \forall \; x\in X,\; x'\in X',{\rm \; }y\in Y. 
\end{equation} 

Analogously, if  $\left(\psi ,\psi '\right):f\times 1_{Y} \to t$ is a morphism, then we can define a morphism $\left(\tilde{\psi },\tilde{\psi }^{'} \; \right):f\to t^{\# } $ by
\begin{equation} \label{GrindEQ__3_8_} 
\left(\tilde{\psi }\left(x\right)\left(y\right),\tilde{\psi }' \left(x'\right)\left(y\right)\right)=\left(\psi \left(x,y\right),\psi '\left(x',y\right)\right),\; \; \; \forall \; x\in X,\; x'\in X',{\rm \; }y\in Y. 
\end{equation} 

\begin{theorem}
 If $\mathbf{\left  (p,p'\right)}=\left\{\left(p_{\lambda } ,p'_{\lambda } \right)\; \right\}:f\to \mathbf f$ is a strong fiber expansion and  $Y$ is a compact Hausdorff space, then $\left(\mathbf{p}\times 1_{Y} ,\mathbf{p'}\times 1_{Y} \right)=\left\{\left(p_{\lambda } \times 1_{Y} ,p_{\lambda }^{'} \times 1_{Y} \right)\; \right\}:f\times 1_{Y} \to \mathbf{f}\times 1_{Y} $ is also a strong fiber\textbf{ }expansion.
\end{theorem}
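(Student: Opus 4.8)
The plan is to deduce both conditions SF1) and SF2) for the product morphism $(\mathbf{p}\times 1_Y,\mathbf{p'}\times 1_Y)\colon f\times 1_Y\to\mathbf f\times 1_Y$ from the corresponding conditions for $(\mathbf{p},\mathbf{p'})\colon f\to\mathbf f$, by transporting everything across the exponential correspondence between morphisms whose source is a product $h\times 1_Y$ and morphisms whose target is the function map $t^{\#}=t^{Y}\colon C(Y,P)\to C(Y,P')$, $g\mapsto t\circ g$, set up in \eqref{GrindEQ__3_5_}--\eqref{GrindEQ__3_8_}. Since $Y$ is compact Hausdorff, it is locally compact Hausdorff, so the exponential law $C(Y\times Z,W)\cong C(Z,C(Y,W))$ (compact--open topologies) is a homeomorphism for all $Z,W$; consequently, for any map $h\colon Z\to Z'$, formulas \eqref{GrindEQ__3_7_} and \eqref{GrindEQ__3_8_} give mutually inverse bijections between morphisms $(\varphi,\varphi')\colon h\times 1_Y\to t$ and morphisms $(\tilde\varphi,\tilde\varphi')\colon h\to t^{\#}$ — indeed the morphism identity $t\circ\varphi=\varphi'\circ(h\times 1_Y)$ transposes precisely into $t^{\#}\circ\tilde\varphi=\tilde\varphi'\circ h$. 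Moreover, by the Proposition asserting that $t^{\#}$ is an $\mathbf{ANR}$-map, $t^{\#}$ is again an $\mathbf{ANR}$-map whenever $t$ is, so the hypothesis that $(\mathbf{p},\mathbf{p'})$ is a strong fiber expansion applies verbatim with $t$ replaced by $t^{\#}$.

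Before carrying this out I would record the naturality of the correspondence that is actually used: (a) composition on the source with a morphism $(p_\lambda\times 1_Y,p'_\lambda\times 1_Y)$ transposes to precomposition with $(p_\lambda,p'_\lambda)$, and likewise for the bonding morphisms $(p_{\lambda\lambda'},p'_{\lambda\lambda'})$; (b) using the obvious homeomorphisms $Z\times I\times Y\cong Z\times Y\times I$ and $Z\times I\times I\times Y\cong Z\times Y\times I\times I$, fiber homotopies $(f\times 1_Y)\times 1_I\to t$ transpose to fiber homotopies $f\times 1_I\to t^{\#}$, and homotopies of homotopies $(f\times 1_Y)\times 1_I\times 1_I\to t$ transpose to homotopies of homotopies $f\times 1_I\times 1_I\to t^{\#}$, compatibly with the endpoint data; (c) because the adjunction only transposes the $Y$-coordinate and leaves the $I$-coordinates fixed, the property of being \emph{fixed on the submap} $(\,\cdot\,)\times 1_{\partial I}$ is preserved in both directions. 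All of (a)--(c) is a direct unwinding of \eqref{GrindEQ__3_7_}--\eqref{GrindEQ__3_8_}.

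Granting this, SF1) for $f\times 1_Y$ follows at once: given an $\mathbf{ANR}$-map $t$ and a morphism $(\varphi,\varphi')\colon f\times 1_Y\to t$, transpose it to $(\tilde\varphi,\tilde\varphi')\colon f\to t^{\#}$, apply SF1) for $(\mathbf{p},\mathbf{p'})$ to obtain $\lambda$ and $(\chi_\lambda,\chi'_\lambda)\colon f_\lambda\to t^{\#}$ with $(\tilde\varphi,\tilde\varphi')\cong(\chi_\lambda,\chi'_\lambda)\circ(p_\lambda,p'_\lambda)$, and transpose everything back using (a)--(b) to get $(\bar\chi_\lambda,\bar\chi'_\lambda)\colon f_\lambda\times 1_Y\to t$ with $(\varphi,\varphi')\cong(\bar\chi_\lambda,\bar\chi'_\lambda)\circ(p_\lambda\times 1_Y,p'_\lambda\times 1_Y)$. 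For SF2), begin with $\lambda$, morphisms $(\varphi_\lambda,\varphi'_\lambda),(\psi_\lambda,\psi'_\lambda)\colon f_\lambda\times 1_Y\to t$ and a fiber homotopy $(\Theta,\Theta')\colon(f\times 1_Y)\times 1_I\to t$ joining their composites with $(p_\lambda\times 1_Y,p'_\lambda\times 1_Y)$; transpose all of this to $t^{\#}$, apply SF2) for $(\mathbf{p},\mathbf{p'})$ and the $\mathbf{ANR}$-map $t^{\#}$ to obtain $\lambda'\ge\lambda$ and fiber homotopies $(\tilde\Delta,\tilde\Delta')\colon f_{\lambda'}\times 1_I\to t^{\#}$, $(\tilde\Gamma,\tilde\Gamma')\colon f\times 1_I\times 1_I\to t^{\#}$ satisfying the conclusion of SF2), and transpose back, using (a)--(c), to fiber homotopies $(\Delta,\Delta')\colon(f_{\lambda'}\times 1_Y)\times 1_I\to t$ and $(\Gamma,\Gamma')\colon(f\times 1_Y)\times 1_I\times 1_I\to t$ realizing the conclusion of SF2) for $f\times 1_Y$, the last being fixed on $(f\times 1_Y)\times 1_{\partial I}$.

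I expect no genuine conceptual difficulty here; the work lies entirely in the bookkeeping of the second paragraph — verifying meticulously that transposing the $Y$-coordinate commutes with every piece of structure in sight (the ``fiber'' identities, composition with $(p_\lambda,p'_\lambda)$ and the bonding morphisms, and the two nested homotopy parameters of SF2)), so that in particular ``fixed on $(\,\cdot\,)\times 1_{\partial I}$'' really does survive transposition in both directions. The one point that is not pure bookkeeping is the assertion that $t^{\#}$ is an $\mathbf{ANR}$-map: this is supplied by the cited Proposition, and the only thing to watch is that the case invoked here has $Y$ compact Hausdorff rather than compact metric, so one should either appeal to a version of that Proposition valid in this generality or restrict $Y$ accordingly.
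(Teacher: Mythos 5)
Your proposal is correct and follows essentially the same route as the paper: the paper's proof of this theorem also transposes everything through the exponential correspondence \eqref{GrindEQ__3_7_}--\eqref{GrindEQ__3_8_} to the function-space map $t^{\#}$, applies SF1) and SF2) for $(\mathbf{p},\mathbf{p'})$ there, and transposes back, verifying the endpoint and rel-$\partial I$ conditions by direct computation. Your closing caveat is also well taken -- the cited Proposition on $t^{\#}$ being an $\mathbf{ANR}$-map is stated for compact metric $Z$ while the theorem allows $Y$ compact Hausdorff, a mismatch the paper passes over silently.
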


\begin{proof}
First, let prove that $\left(\mathbf{p}\times 1_{Y} ,\mathbf{p'}\times 1_{Y} \right)$ has the property SF1). Consider any $\mathbf{ANR}$-map $t:P\to P'$ and a morphism $\left(\varphi ,\varphi '\right):f\to t$. Let $\left(\tilde{\varphi },\; \tilde{\varphi }' \; \right):f\to t^{\# } $ is a corresponding morphism. Since $t^{\# } $ is an $\mathbf{ANR}$-map and $\mathbf{\left(p,p'\right)}=\left\{\left(p_{\lambda } ,p'_{\lambda } \right)\; \right\}:f\to \mathbf f$ is a strong fiber expansion, and so has the property SF1), then there exist a $\lambda \in {\rm \Lambda }$ and a morphism $\left(\varphi _{\lambda } ,\varphi' _{\lambda } \right):f_{\lambda } \to t^{\# } $ and a fiber homotopy $\left({\rm \Theta },{\rm \Theta }'\right):f\times 1_{I} \to t^{\# } $ such that
\begin{equation} \label{GrindEQ__3_9_} 
{\rm \Theta }\left(x,0\right)=\tilde{\varphi }\left(x\right),\; {\rm \Theta }\left(x,1\right)=\left(\varphi _{\lambda } \circ \; p_{\lambda } \right)\left(x\right),\; \; \;  \forall x\in X\;  
\end{equation} 
\begin{equation} \label{GrindEQ__3_10_} 
{\rm \Theta }'\left(x',0\right)=\tilde{\varphi }' \left(x'\right),\; {\rm \Theta }'\left(x',1\right)=\left(\varphi' _{\lambda } \circ \; p'_{\lambda } \right)\left(x'\right),\; \; \; \forall x'\in X'.\;  
\end{equation} 
Let $\left(\bar{\varphi }_{\lambda } ,\bar{\varphi }_{\lambda }' \; \right):f_{\lambda } \times 1_{Y} \to t$ and $\left(\bar{{\rm \Theta }},\bar{{\rm \Theta }}^{'}\right):f\times 1_{Y} \times 1_{I} \to t^{\# } $ (note that instead of $f\times 1_{Y} \times 1_{I} $, we should have $f\times 1_{I} \times 1_{Y} $, but we use the above mentioned notation) be the morphisms induced by  $\left(\varphi _{\lambda } ,\varphi' _{\lambda } \right)$ and $\left({\rm \Theta },{\rm \Theta }'\right)$, respectively. In this case, we have
\begin{equation} \label{GrindEQ__3_11_} 
\bar{{\rm \Theta }}\left(x,y,0\right)=\bar{\varphi }\left(x,y\right),\; \; \bar{{\rm \Theta }}\left(x,y,1\right)=\left(\bar{\varphi }_{\lambda } \circ \left(p_{\lambda } \times 1_{Y} \right)\right)\left(x,y\right),\;  
\end{equation} 
\begin{equation} \label{GrindEQ__3_12_} 
\bar{{\rm \Theta }}^{'} \left(x',y,0\right)=\bar{\varphi }^{'} \left(x',y\right),\; \bar{{\rm \Theta }}^{'} \left(x',y,1\right)=\left(\bar{\varphi }'_{\lambda } \circ \left(p'_{\lambda } \times 1_{Y} \right)\right)\left(x',y\right).\;  
\end{equation} 
Indeed,
\begin{equation} \label{GrindEQ__3_13_} 
\bar{{\rm \Theta }}\left(x,y,0\right)={\rm \Theta }\left(x,0\right)\left(y\right)=\tilde{\varphi }\left(x\right)\left(y\right)=\bar{\varphi }\left(x,y\right), 
\end{equation} 

\[\bar{{\rm \Theta }}\left(x,y,1\right)={\rm \Theta }\left(x,1\right)\left(y\right)=\left(\varphi _{\lambda } \circ \; p_{\lambda } \right)\left(x\right)\left(y\right)=\] 
\begin{equation} \label{GrindEQ__3_14_} 
\varphi _{\lambda } \left(p_{\lambda } \left(x\right)\right)\left(y\right)=\bar{\varphi }\left(p_{\lambda } \left(x\right),y\right)=\left(\bar{\varphi }_{\lambda } \circ \left(p_{\lambda } \times 1_{Y} \right)\right)\left(x,y\right), 
\end{equation} 
\begin{equation} \label{GrindEQ__3_15_} 
\bar{{\rm \Theta }}^{'} \left(x',y,0\right)={\rm \Theta }'\left(x',0\right)\left(y\right)=\tilde{\varphi }' \left(x'\right)\left(y\right)=\bar{\varphi }' \left(x',y\right), 
\end{equation} 

\[\bar{{\rm \Theta }}^{'} \left(x',y,1\right)={\rm \Theta }'\left(x',1\right)\left(y\right)=\left(\varphi' _{\lambda } \circ \; p'_{\lambda } \right)\left(x'\right)\left(y\right)=\] 
\begin{equation} \label{GrindEQ__3_16_} 
\varphi' _{\lambda } \left(p'_{\lambda } \left(x'\right)\right)\left(y\right)=\bar{\varphi }' \left(p'_{\lambda } \left(x'\right),y\right)=\left(\bar{\varphi }'_{\lambda } \circ \left(p'_{\lambda } \times 1_{Y} \right)\right)\left(x',y\right). 
\end{equation} 
Therefore, $\left(\bar{{\rm \Theta }},\bar{{\rm \Theta }}^{\rm '}\right):f\times 1_{Y} \times 1_{I} \to t^{\# } $ is a fiber homotopy which connects the morphisms $\left(\bar{\varphi }_{\lambda } ,\bar{\varphi }'_{\lambda } \; \right):f_{\lambda } \times 1_{Y} \to t$ and $\left(\bar{\varphi }_{\lambda } ,\bar{\varphi }_{\lambda }' \right)\circ (\left(p_{\lambda } \times 1_{Y} ,p'_{\lambda } \times 1_{Y} \right):f_{\lambda } \times 1_{Y} \to t$ and the so condition SF1) is fulfilled. 

Now, we have to prove the SF2). Let $\left(\sigma _{\lambda } ,\sigma' _{\lambda } \right):\left(f_{\lambda } \times 1_{Y} \right)\times 1_{\partial I} \to t$ be a morphism and $\left({\rm \Theta },{\rm \Theta }^{'}\right):\left(f\times 1_{Y} \right)\times 1_{I} \to t$  be a fiber homotopy such that
\begin{equation} \label{GrindEQ__3_17_} 
\left({\rm \Theta }_{|X\times Y\times \partial I} ,{\rm \Theta }_{|X'\times Y\times \partial I}^{'} \right)=\left(\sigma _{\lambda } ,\sigma' _{\lambda } \right)\circ \left(p_{\lambda } \times 1_{Y} \times 1_{\partial I} ,p'_{\lambda } \times 1_{Y} \times 1_{\partial I} \right).    
\end{equation} 
Therefore, we have
\[{\rm \Theta }\left(x,y,s\right)=\left(\sigma _{\lambda } \circ p_{\lambda } \times 1_{Y} \times 1_{\partial I} \right)\left(x,y,s\right)=\] 
\begin{equation} \label{GrindEQ__3_18_} 
\sigma _{\lambda } \left(p_{\lambda } \left(x\right),y,s\right),\; \; \; \forall x\in X,\; y\in Y,\; s\in \partial I,                    
\end{equation} 

\[{\rm \Theta }^{'}\left(x',y,s\right)=\left(\sigma' _{\lambda } \circ p'_{\lambda } \times 1_{Y} \times 1_{\partial I} \right)\left(x',y,s\right)=\] 
\begin{equation} \label{GrindEQ__3_19_} 
\sigma' _{\lambda } \left(p'_{\lambda } \left(x'\right),y,s\right),\; \; \; \forall x'\in X',\; y\in Y,\; s\in \partial I. 
\end{equation} 
Consider the morphisms $\left(\tilde{\sigma }_{\lambda } ,\tilde{\sigma }'_{\lambda } \right):f_{\lambda } \times 1_{\partial I} \to t^{\# } $  and $\left(\tilde{{\rm \Theta }},\tilde{{\rm \Theta }}^{{\rm '}} \right):f\times 1_{I} \to t^{\# } $  induced by $\left(\sigma _{\lambda } ,\sigma' _{\lambda } \right)$ and $\left({\rm \Theta },{\rm \Theta }'\right)$. In this case, we have 
\begin{equation} \label{GrindEQ__3_20_} 
\left(\tilde{{\rm \Theta }}_{|X\times \partial I} ,\tilde{{\rm \Theta }}_{|X'\times \partial I}^{'} \right)=\left(\tilde{\sigma }_{\lambda } ,\tilde{\sigma }_{\lambda }' \right)\circ \left(p_{\lambda } \times 1_{\partial I} ,p_{\lambda }' \times 1_{\partial I} \right). 
\end{equation} 
Indeed,
\[\tilde{{\rm \Theta }}\left(x,s\right)\left(y\right)={\rm \Theta }\left(x,y,s\right)=\sigma _{\lambda } \left(p_{\lambda } \left(x\right),y,s\right)=\tilde{\sigma }_{\lambda } \left(p_{\lambda } \left(x\right),s\right)\left(y\right)=\] 
\begin{equation} \label{GrindEQ__3_21_} 
=\left(\tilde{\sigma }_{\lambda } \circ \left(p_{\lambda } \times 1_{\partial I} \right)\right)\left(x,s\right)\left(y\right),\; \; \; \forall x\in X,\; y\in Y,\; s\in \partial I,            
\end{equation} 

\[\tilde{{\rm \Theta }}^{'} \left(x',s\right)\left(y\right)={\rm \Theta }'\left(x',y,s\right)=\sigma _{\lambda }' \left(p_{\lambda }' \left(x'\right),y,s\right)=\tilde{\sigma }_{\lambda }' \left(p_{\lambda }' \left(x'\right),s\right)\left(y\right)=\]
 
\begin{equation} \label{GrindEQ__3_22_} 
=\left(\tilde{\sigma }_{\lambda }' \circ \left(p_{\lambda }' \times 1_{\partial I} \right)\right)\left(x',s\right)\left(y\right),\; \; \; \forall x'\in X',\; y\in Y,\; s\in \partial I. 
\end{equation} 
Therefore, by the SF2) for $\mathbf{\left(p,p'\right)}$, there exist $\lambda '\ge \lambda $ and a fiber homotopy $\left({\rm \Delta },{\rm \Delta }'\right):f_{\lambda '} \times 1_{I} \to t^{\# } $ such that
\begin{equation} \label{GrindEQ__3_23_} 
\left({\rm \Delta }_{|X\times \partial I} ,{\rm \Delta }_{|X'\times \partial I}^{{\rm '}} \right)=\left(\tilde{\sigma }_{\lambda '} ,\tilde{\sigma }_{\lambda '}' \right)\circ \left(p_{\lambda \lambda '} \times 1_{\partial I} ,p_{\lambda \lambda '}' \times 1_{\partial I} \right). 
\end{equation} 
Moreover, there is a fiber homotopy $\left({\rm \Gamma },{\rm \Gamma }'\right):f_{\lambda } \times 1_{I} \times 1_{I} \to t^{\# } $ which connects homotopies $\left(\tilde{{\rm \Theta }},\tilde{{\rm \Theta }}^{{\rm '}} \right)$,  $\left({\rm \Delta },{\rm \Delta }'\right)\circ \left(p_{\lambda '} \times 1_{I} ,p_{\lambda '}' \times 1_{I} \right)$ and $\left({\rm \Gamma },{\rm \Gamma }'\right)$ is fixed on the submap $f\times 1_{\partial I} :X\times \partial I\to X'\times \partial I$ , i.e. 
\begin{equation} \label{GrindEQ__3_24_} 
{\rm \Gamma }\left(x,t,0\right)=\tilde{{\rm \Theta }}\left(x,t\right),\; \; \; {\rm \Gamma }'\left(x',t,0\right)=\tilde{{\rm \Theta }}^{'} \left(x',t\right), 
\end{equation} 
\begin{equation} \label{GrindEQ__3_25_} 
{\rm \Gamma }\left(x,t,1\right)={\rm \Delta }\left(p_{\lambda '} \left(x\right),t\right),\; \; \; {\rm \Gamma }'\left(x',t,0\right)={\rm \Delta }'\left(p_{\lambda ''\; }' \left(x'\right),t\right), 
\end{equation} 
\begin{equation} \label{GrindEQ__3_26_} 
{\rm \Gamma }\left(x,t,s\right)={\rm \Gamma }\left(x,t,1\right)=\tilde{{\rm \Theta }}\left(x,t\right),\; \; \forall \; x\in X,\; t\in \partial I\; ,\; s\in I, 
\end{equation} 
\begin{equation} \label{GrindEQ__3_27_} 
{\rm \Gamma }'\left(x',t,s\right)={\rm \Gamma }'\left(x',t,1\right)=\tilde{{\rm \Theta }}^{'} \left(x',t\right),\; \; \forall \; x'\in X',\; t\in \partial I\; ,\; s\in I. 
\end{equation} 

Let $\left(\bar{{\rm \Delta }},\bar{{\rm \Delta }}'\right):f_{\lambda '} \times 1_{Y} \times 1_{I} \to t$  and $\left(\bar{{\rm \Gamma }},\bar{{\rm \Gamma }}'\right):f_{\lambda } \times 1_{Y} \times 1_{I} \times 1_{I} \to t$ be the morphisms induced by $\left({\rm \Delta },{\rm \Delta }'\right)$ and $\left({\rm \Gamma },{\rm \Gamma }'\right)$, respectively. In this case, we have
\begin{equation} \label{GrindEQ__3_28_} 
\left(\bar{{\rm \Delta }}_{|X\times Y\times \partial I} ,\bar{{\rm \Delta }}_{|X'\times Y\times \partial I}^{'} \right)=\left(\sigma _{\lambda } ,\sigma _{\lambda }' \right)\circ \left(p_{\lambda \lambda '} \times 1_{Y} \times 1_{\partial I} ,p_{\lambda \lambda '}' \times 1_{Y} \times 1_{\partial I} \right),   
\end{equation} 
\begin{equation} \label{GrindEQ__3_29_} 
\left(\bar{{\rm \Gamma }},\bar{{\rm \Gamma }}'\right):\left({\rm \Theta },{\rm \Theta }'\right)\cong \left(\bar{{\rm \Delta }},\bar{{\rm \Delta }}'\right)\circ \left(p_{\lambda } \times 1_{Y} \times 1_{I} ,p_{\lambda }' \times 1_{Y} \times 1_{I} \right)\left(rel\; f\times 1_{\partial I} \right). 
\end{equation} 
Indeed,
\[\bar{{\rm \Delta }}\left(x,y,s\right)={\rm \Delta }\left(x,{\rm s}\right)\left(y\right)=\left(\tilde{\sigma }_{\lambda '} \circ p_{\lambda \lambda '} \times 1_{\partial I} \right)\left(x,s\right)\left(y\right)=\] 

\[\tilde{\sigma }_{\lambda '} \left(p_{\lambda \lambda '} \left(x\right),s\right)\left(y\right)=\sigma _{\lambda '} \left(p_{\lambda \lambda '} \left(x\right),y,s\right)=\] 

\begin{equation} \label{GrindEQ__3_30_} 
\left(\sigma _{\lambda '} \circ p_{\lambda \lambda '} \times 1_{Y} \times 1_{\partial I} \right)\left(x,y,s\right),\; \; \; \; \forall \; x\in X,y\in Y,\; s\in \partial I, 
\end{equation} 

\[\bar{{\rm \Delta }}^{'} \left(x',y,s\right)={\rm \Delta }'\left(x',{\rm s}\right)\left(y\right)=\left(\tilde{\sigma }_{\lambda '}' \circ p_{\lambda \lambda '}' \times 1_{\partial I} \right)\left(x',s\right)\left(y\right)=\]
 
\[\tilde{\sigma }_{\lambda '}' \left(p_{\lambda \lambda '}' \left(x'\right),s\right)\left(y\right)=\sigma _{\lambda '}' \left(p_{\lambda \lambda '}' \left(x'\right),y,s\right)=\] 
\begin{equation} \label{GrindEQ__3_31_} 
\left(\sigma _{\lambda '}' \circ p_{\lambda \lambda '}' \times 1_{Y} \times 1_{\partial I} \right)\left(x',y,s\right),\; \; \; \; \forall \; x'\in X',y\in Y,\; s\in \partial I. 
\end{equation} 
Therefore, \eqref{GrindEQ__3_28_} is fulfilled. On the other hand, 
\begin{equation} \label{GrindEQ__3_32_} 
\bar{{\rm \Gamma }}\left(x,y,t,0\right)={\rm \Gamma }\left(x,t,0\right)\left(y\right)=\tilde{{\rm \Theta }}\left(x,t\right)\left(y\right)={\rm \Theta }\left(x,y,t\right), 
\end{equation} 
\begin{equation} \label{GrindEQ__3_33_} 
\bar{{\rm \Gamma }}^{'} \left(x',y,t,0\right)={\rm \Gamma }'\left(x',t,0\right)\left(y\right)=\tilde{{\rm \Theta }}^{'} \left(x',t\right)\left(y\right)={\rm \Theta }'\left(x',y,t\right), 
\end{equation} 
\begin{equation} \label{GrindEQ__3_34_} 
\bar{{\rm \Gamma }}\left(x,y,t,1\right)={\rm \Gamma }\left(x,t,1\right)\left(y\right)={\rm \Delta }\left(p_{\lambda '} \left(x\right),t\right)\left(y\right)=\bar{{\rm \Delta }}\left(p_{\lambda '} \left(x\right),y,t\right), 
\end{equation} 
\begin{equation} \label{GrindEQ__3_35_} 
\bar{{\rm \Gamma }}^{'} \left(x',y,t,1\right)={\rm \Gamma }'\left(x',t,1\right)\left(y\right)={\rm \Delta }'\left(p_{\lambda '}^{'} \left(x'\right),t\right)\left(y\right)=\bar{{\rm \Delta }}^{'} \left(p_{\lambda '}^{'} \left(x'\right),y,t\right). 
\end{equation} 
Therefore, it remains to show that
\begin{equation} \label{GrindEQ__3_36_} 
\bar{{\rm \Gamma }}\left(x,y,t,s\right)=\bar{{\rm \Gamma }}\left(x,y,t,1\right)={\rm \Theta }\left(x,y,t\right),\; \; \forall \; x\in X,y\in Y,\; t\in \partial I\; ,\; s\in I, 
\end{equation} 
\begin{equation} \label{GrindEQ__3_37_} 
\bar{{\rm \Gamma }}^{'} \left(x',y,t,s\right)=\bar{{\rm \Gamma }}^{'} \left(x',y,t,1\right)={\rm \Theta }'\left(x',y,t\right),\; \; \forall \; x'\in X',y\in Y,\; t\in \partial I,\; s\in I, 
\end{equation} 
which are followed by \eqref{GrindEQ__3_26_} and \eqref{GrindEQ__3_27_}. 
\end{proof}

\begin{Definition}
Let $\left(\varphi ,\varphi '\right):f\to g$ be a morphism. We will say that $\left(\varphi ,\varphi '\right)$ is a fiber cofibration if for each fiber homotopy $\left({\rm \Theta },{\rm \Theta }'\right){\rm \; }:f\times 1_{I} \to g$ and a morphism $\left(\psi ,\psi '\right):g\to t$ for which  
\begin{equation} \label{GrindEQ__3_38_} 
\left({\rm \Theta }_{|X\times \left\{0\right\}} ,{\rm \Theta }_{|X'\times \left\{0\right\}}^{'} \right)=\left({\rm \Theta },{\rm \Theta }'\right)\circ \left(i_{X\times \left\{0\right\}} ,i_{X'\times \left\{0\right\}} \right)=\left(\psi ,\psi '\right)\circ \left(\varphi ,\varphi '\right),    
\end{equation} 
there exists a fiber homotopy $\left({\rm \Delta },{\rm \Delta }'\right):g\times 1_{I} \to t$ such that
\begin{equation} \label{GrindEQ__3_39_} 
\left({\rm \Delta }_{|Y\times \left\{0\right\}} ,{\rm \Delta }_{|Y'\times \left\{0\right\}}^{'} \right)=\left({\rm \Delta },{\rm \Delta }'\right)\circ \left(i_{Y\times \left\{0\right\}} ,i_{Y'\times \left\{0\right\}} \right)=\left(\psi ,\psi '\right),            
\end{equation} 
\begin{equation} \label{GrindEQ__3_40_} 
\left({\rm \Delta },{\rm \Delta }'\right)\circ \left(\varphi \times 1_{I} ,\varphi '\times 1_{I} \right)=\left({\rm \Theta },{\rm \Theta }'\right).                      
\end{equation} 
\end{Definition}

\begin{lemma} If for a morphism $\left(\varphi ,\varphi '\right):f\to g$ the maps $\varphi :X\to Y$, $\varphi ':X'\to Y',$ $g:Y\to Y'$ and  $i_{B} :B\to Y'$, where  $B=\varphi '\left(X'\right)\mathop{\cup }\nolimits^{} g\left(Y\right)$ are cofibrations, then the morphism  $\left(\varphi ,\varphi '\right)$ is a fiber cofibration.
\end{lemma}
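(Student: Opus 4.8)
The plan is to build the two homotopies $\Delta:Y\times I\to P$ and $\Delta':Y'\times I\to P'$ successively, using the four cofibrations in turn, and then to check that $(\Delta,\Delta'):g\times 1_I\to t$ is a morphism satisfying \eqref{GrindEQ__3_39_} and \eqref{GrindEQ__3_40_}. Since $\varphi,\varphi',g$ are cofibrations they are embeddings, and as the square $g\circ\varphi=\varphi'\circ f$ commutes we may, by a harmless abuse of notation, treat $X$ as a closed subspace of $Y$, and $X'$ and $Y$ as closed subspaces of $Y'$, with $\varphi,\varphi',g$ the corresponding inclusions.

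First I construct $\Delta$. Because $\varphi$ is a cofibration, the inclusion $(Y\times\{0\})\cup(X\times I)\hookrightarrow Y\times I$ has a retraction $r$. By \eqref{GrindEQ__3_38_} the maps $\psi:Y\to P$ and $\Theta:X\times I\to P$ agree on $X\times\{0\}$, so together they define a continuous map on $(Y\times\{0\})\cup(X\times I)$; composing it with $r$ produces $\Delta:Y\times I\to P$ with $\Delta|_{Y\times\{0\}}=\psi$ and $\Delta\circ(\varphi\times 1_I)=\Theta$.

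Next I construct $\Delta'$. Put $B=\varphi'(X')\cup g(Y)$ and define a map $\overline{\Delta}'$ on $(Y'\times\{0\})\cup(B\times I)$ by $\overline{\Delta}'|_{Y'\times\{0\}}=\psi'$, $\overline{\Delta}'|_{X'\times I}=\Theta'$, and $\overline{\Delta}'(y,s)=t(\Delta(y,s))$ for $y\in Y$, $s\in I$. The substantive point is that $\overline{\Delta}'$ is well defined and continuous. On $B\times\{0\}$ the prescriptions match, since $\Theta'|_{X'\times\{0\}}=\psi'|_{X'}$ by \eqref{GrindEQ__3_38_} and $t(\Delta(y,0))=t(\psi(y))=\psi'(y)$ because $(\psi,\psi'):g\to t$ is a morphism. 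On $(X'\cap Y)\times I$ one must compare $\Theta'(y,s)$ with $t(\Delta(y,s))$; here one uses $\Delta\circ(\varphi\times 1_I)=\Theta$ and the morphism identity $t\circ\Theta=\Theta'\circ(f\times 1_I)$ carried by the fiber homotopy, together with $g\circ\varphi=\varphi'\circ f$, to obtain the required equality. Finally, since $i_B:B\to Y'$ is a cofibration, $(Y'\times\{0\})\cup(B\times I)$ is a retract of $Y'\times I$, and composing $\overline{\Delta}'$ with such a retraction gives the desired $\Delta':Y'\times I\to P'$.

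It remains to verify the conclusion. That $(\Delta,\Delta')$ is a morphism $g\times 1_I\to t$, i.e. $t\circ\Delta=\Delta'\circ(g\times 1_I)$, is precisely the third clause in the definition of $\overline{\Delta}'$ read off on $g(Y)\times I$ (using that $g$ is an embedding). Condition \eqref{GrindEQ__3_39_} is immediate from $\Delta|_{Y\times\{0\}}=\psi$ and $\Delta'|_{Y'\times\{0\}}=\psi'$; condition \eqref{GrindEQ__3_40_} follows from $\Delta\circ(\varphi\times 1_I)=\Theta$ together with, for the primed coordinate, $X'\subseteq B$ and the retraction property, which give $\Delta'\circ(\varphi'\times 1_I)=\overline{\Delta}'|_{X'\times I}=\Theta'$. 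I expect the one genuinely delicate step to be the compatibility of $\overline{\Delta}'$ on the overlap $(X'\cap Y)\times I$: this is exactly the place where the hypothesis that $(\varphi,\varphi')$ is a morphism $f\to g$, i.e. the commutativity $g\circ\varphi=\varphi'\circ f$, is used essentially, and the cofibrations $\varphi'$ and $i_B$ are what let the compatible family defined on $B$ be spread over all of $Y'$ without breaking the morphism relation with $\Delta$.
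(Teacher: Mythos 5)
Your proof is correct and reaches the same conclusion as the paper's, but it streamlines the construction of $\Delta'$ in a way worth recording. The first half (obtaining $\Delta:Y\times I\to P$ from the cofibration $\varphi$ and the agreement of $\psi$ and $\Theta$ on $X\times\{0\}$) is identical to the paper's. For $\Delta'$ the paper proceeds in three extension steps: it extends $t\circ\Delta$ over all of $Y'\times I$ using the cofibration $g$ (obtaining $\Delta_1'$ with $\Delta_1'\circ(g\times 1_I)=t\circ\Delta$), separately extends $\Theta'$ over all of $Y'\times I$ using the cofibration $\varphi'$ (obtaining $\Delta_1''$), glues the restrictions of these two global homotopies on $B=\varphi'(X')\cup g(Y)$ into $\Delta_2$, and finally re-extends $\Delta_2$ from $B$ using the cofibration $i_B$. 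You skip the two intermediate global extensions and glue the raw data $\psi'$, $\Theta'$ and $t\circ\Delta$ directly on $(Y'\times\{0\})\cup(B\times I)$, then apply the retraction of $Y'\times I$ onto this subspace furnished by the cofibration $i_B$. Both routes end by checking the same three identities ($\Delta'|_{Y'\times\{0\}}=\psi'$, $\Delta'\circ(\varphi'\times 1_I)=\Theta'$, and $t\circ\Delta=\Delta'\circ(g\times 1_I)$, i.e.\ \eqref{GrindEQ__3_39_} and \eqref{GrindEQ__3_40_}), and the one delicate point is literally the same in both: the compatibility of $\Theta'$ and $t\circ\Delta$ over the overlap $\varphi'(X')\cap g(Y)$. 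You and the paper both verify this only on the subset $A=(\varphi'\circ f)(X)=(g\circ\varphi)(X)$ (the computation \eqref{GrindEQ__3_50.1_}), which in general may be properly contained in $\varphi'(X')\cap g(Y)$; so neither argument is more complete than the other on this point, and both implicitly assume the pasting on $B\times I$ is continuous (which needs $\varphi'$ and $g$ to be closed embeddings, as cofibrations into Hausdorff spaces are). What your version buys is economy of hypotheses: the homotopy extension property of $g$ and of $\varphi'$ is never invoked, only their being embeddings, whereas the paper genuinely uses all four cofibration assumptions.
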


\begin{proof} Consider a morphism $\left(\psi ,\psi '\right):f\to t$ and a fiber homotopy $\left({\rm \Theta },{\rm \Theta }'\right):f\times 1_{I} \to t$  for which \eqref{GrindEQ__3_37_} is fulfilled. By our assumption, the continuous map $\varphi :X\to Y$ is a cofibration and ${\rm \Theta }:X\times I\to P$ is a homotopy such that
\begin{equation} \label{GrindEQ__3_41_} 
{\rm \Theta }_{|X\times \left\{0\right\}} =\psi \circ \varphi. 
\end{equation} 
Therefore, there exists a homotopy ${\rm \Delta }:Y\times I\to P$ such that
\begin{equation} \label{GrindEQ__3_42_} 
{\rm \Delta }_{|Y\times \left\{0\right\}} =\psi , 
\end{equation} 
\begin{equation} \label{GrindEQ__3_43_} 
{\rm \Delta }\circ \left(\varphi \times 1_{I} \right)={\rm \Theta }. 
\end{equation} 

Let ${\rm \Delta }_{1} :Y\times I\to P'$ be the map given by
\begin{equation} \label{GrindEQ__3_44_} 
{\rm \Delta }_{1} =t\circ {\rm \Delta }. 
\end{equation} 
In this case, we have
\begin{equation} \label{GrindEQ__3_45_} 
{{\text{ }\!\!\Delta\!\!\text{ }}_{1}}_{|Y\times \left\{ 0 \right\}}=t\circ \psi ={\psi }'\circ g.
\end{equation} 
On the other hand, $g:Y\to Y'$ is a cofibration and so there exists a homotopy ${\rm \Delta }_{1}^{'} :Y'\times I\to P'$ such that 
\begin{equation} \label{GrindEQ__3_46_} 
\text{ }\!\!\Delta\!\!\text{ }{{_{1}^{'}}_{|{Y}'\times \left\{ 0 \right\}}}={\psi }', 
\end{equation} 
\begin{equation} \label{GrindEQ__3_47_} 
{\rm \Delta }_{1}^{'} \circ \left(g\times 1_{I} \right)={\rm \Delta }_{1} . 
\end{equation} 
In the same way, for the maps $\varphi ':X'\to Y',$  $\psi ':Y'\to P'$ and the homotopy ${\rm \Delta }':X'\times I\to P'$ we have a homotopy ${\rm \Delta }_{1}^{''} \; :Y'\times I\to P'$ such that 
\begin{equation} \label{GrindEQ__3_48_} 
\text{ }\!\!\Delta\!\!\text{ }{{_{1}^{''}}_{|{Y}'\times \left\{ 0 \right\}}}={\psi }', 
\end{equation} 
\begin{equation} \label{GrindEQ__3_49_} 
{\rm \Delta }_{1}^{''} \circ \left(\varphi '\times 1_{I} \right)={\rm \Theta }'. 
\end{equation} 
Let $A=\left(\; \varphi '\circ f\right)\left(X\right)=\left(\; g\circ \varphi \right)\left(X\right)\subseteq Y'$ and $A\times I\subseteq Y'\times I$. Our aim is to show that $\text{ }\!\!\Delta\!\!\text{ }{{_{1}^{'}}_{|A\times I}}=\text{ }\!\!\Delta\!\!\text{ }{{_{1}^{''}}_{|A\times I}}$. Indeed, for each $a\in A$ there exists an $x\in X$ such that $g\left(\varphi \left(x\right)\right)=\varphi '\left(f\left(x\right)\right)=a$ and so
 
\[{\rm \Delta }_{1}^{'} \left(a,t\right)={\rm \Delta }_{1}^{'} \left(g\left(\varphi \left(x\right)\right),t\right)=\left({\rm \Delta }_{1}^{'} \circ \left(g\times 1_{I} \right)\right)\left(\varphi \left(x\right),t\right)={\rm \Delta }_{1} \left(\varphi \left(x\right),t\right)=\] 

\[\left(t\circ {\rm \Delta }\right)\left(\varphi \left(x\right),t\right)=\left(t\circ {\rm \Delta }\circ \left(\varphi \times 1_{I} \right)\right)\left(x,t\right)=t\left(\left({\rm \Delta }\circ \left(\varphi \times 1_{I} \right)\right)\left(x,t\right)\right)=\] 

\[t\left({\rm \Theta }\left(x,t\right)\right)=\left(t\circ {\rm \Theta }\right)\left(x,t\right)=\left({\rm \Theta }'\circ \left(f\times 1_{I} \right)\right)\left(x,t\right)={\rm \Theta }'\left(f\left(x\right),t\right)=\] 

\[\left({\rm \Delta }_{1}^{''} \circ \left(\varphi '\times 1_{I} \right)\right)\left(f\left(x\right),t\right)=\left({\rm \Delta }_{1}^{''} \circ \left(\varphi '\times 1_{I} \right)\circ \left(f\times 1_{I} \right)\right)\left(x,t\right)=\] 
\begin{equation} \label{GrindEQ__3_50.1_} 
{\rm \Delta }_{1}^{''} \left(\varphi '\left(f\left(x\right)\right),t\right)={\rm \Delta }_{1}^{''} \left(a,t\right). 
\end{equation} 
Let $B=\; \varphi '\left(X'\right)\mathop{\cup }\nolimits^{} g\left(Y\right)\subseteq Y'$ and ${\rm \Delta }_{2} :B\times I\to P'$ is given by
\begin{eqnarray} \label{GrindEQ__3_50_} 
{\rm \Delta }_{2}=\begin{cases} 
{{\rm \Delta }_{1}^{'} \left(b,t\right),\; if\; b\in g\left(Y\right)} \\ 
{{\rm \Delta }_{1}^{''} \left(b,t\right),\; if\; b\in \varphi '\left(X'\right).}
\end{cases}       
\end{eqnarray}
By \eqref{GrindEQ__3_49_} it is clear that ${\rm \Delta }_{2} $ is well defined. On the other hand, $\psi ':Y'\to P'$ is a continuous map and ${\rm \Delta }_{2} :B\times I\to P'$ is a homotopy such that
\begin{equation} \label{GrindEQ__3_51_} 
{{\text{ }\!\!\Delta\!\!\text{ }}_{2}}_{|B\times \left\{ 0 \right\}}={\psi }'\circ {{i}_{B}}. 
\end{equation} 
By our assumption, $i_{B} :B\to Y'$ is a cofibration and so there exists a ${\rm \Delta }':Y'\times I\to P'$ such that 
\begin{equation} \label{GrindEQ__3_52_} 
{\rm \Delta }_{|Y'\times \left\{0\right\}}^{{\rm '}} =\psi ', 
\end{equation} 
\begin{equation} \label{GrindEQ__3_53_} 
{\rm \Delta }'\circ \left(i_{B} \times 1_{I} \right)={\rm \; \Delta }_{2} .                                          
\end{equation} 
By \eqref{GrindEQ__3_48_}, \eqref{GrindEQ__3_49_}, \eqref{GrindEQ__3_50_} and \eqref{GrindEQ__3_53_} we have
\begin{equation} \label{GrindEQ__3_54_} 
{\rm \Delta }'\circ \left(\varphi '\times 1_{I} \right)={\rm \Theta }'. 
\end{equation} 

Now we will show that a pair $\left({\rm \Delta },{\rm \Delta }'\right)$ is a morphism from $g\times 1_{I} $ to $t$. For this aim we must show that the following diagram is commutative 
\begin{equation} \label{GrindEQ__3_55_}
\begin{tikzpicture}
\node[left] at (-1,0 ){$Y \times I$};
\node[right] at (1,0 ){$Yy' \times I$};
\node[left] at (-1.5,-2 ){$P$};
\node[right] at (1.5,-2 ){$P'.$};
\draw [thick, ->] (-1,0) to node[above] {$g \times 1_I$} (1,0);
\draw [thick, ->] (-1,-2) to node[above] {$t$} (1,-2);
\draw [thick, ->] (-1.7,-0.5) to node[right] {$\Delta$} (-1.7,-1.5);
\draw [thick, ->] (1.7,-0.5) to node[right] {$\Delta'$} (1.7,-1.5);
\end{tikzpicture}
\end{equation} 
By \eqref{GrindEQ__3_50_}, \eqref{GrindEQ__3_53_}, \eqref{GrindEQ__3_46_} and \eqref{GrindEQ__3_43_} we have
\[\left({\rm \Delta }'\circ \left(g\times 1_{I} \right)\right)\left(y,t\right)={\rm \Delta }'\left(g\left(y\right),t\right)=\left({\rm \Delta }'\circ \left(i_{B} \times 1_{I} \right)\right)\left(g\left(y\right),t\right)=\] 
\[{\rm \Delta }_{2} \left(g\left(y\right),t\right)={\rm \Delta }_{1}^{'} \left(g\left(y\right),t\right)=\left({\rm \Delta }_{1}^{'} \circ \left(g\times 1_{I} \right)\right)\left(y,t\right)=\] 
\begin{equation} \label{GrindEQ__3_56_} 
{\rm \Delta }_{1} \left(y,t\right)=\left(t\circ {\rm \Delta }\right)\left(y,t\right). 
\end{equation} 
So we obtain the fiber homotopy $\left({\rm \Delta },{\rm \Delta }'\right):g\times 1_{I} \to t$ for which \eqref{GrindEQ__3_38_} and \eqref{GrindEQ__3_39_} are fulfilled. 
\end{proof}

\begin{lemma}
 For any continuous map $\; f:X\to X'$ and any positive integer  $n\in \mathbb{N}$, the morphism $\left(1_{X} \times i_{\partial I^{n} } ,1_{X'} \times i_{\partial I^{n} } \right):f\times 1_{\partial I^{n} } \to f\times 1_{I^{n} } $ is a fiber cofibration.
\end{lemma}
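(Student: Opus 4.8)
The plan is to build the required homotopy extension explicitly. The preceding lemma does not apply directly here, since its hypotheses would force the target map $f\times 1_{I^{n}}$ to be a cofibration, which is false for an arbitrary $f$. The only geometric ingredient is that $(I^{n},\partial I^{n})$ is an NDR-pair, i.e. that $i_{\partial I^{n}}$ is a closed cofibration, so that $K:=\left(I^{n}\times\{0\}\right)\cup\left(\partial I^{n}\times I\right)$ is a retract of $I^{n}\times I$; I would fix a retraction $\rho:I^{n}\times I\to K$. What makes the argument work in the fibered setting is that $\rho$ moves only the cube coordinates, so $1_{X}\times\rho$ and $1_{X'}\times\rho$ are automatically intertwined by $f$.

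Now apply the definition of a fiber cofibration to $\left(1_{X}\times i_{\partial I^{n}},1_{X'}\times i_{\partial I^{n}}\right)$: let $t:P\to P'$ be a continuous map, $\left(\psi,\psi'\right):f\times 1_{I^{n}}\to t$ a morphism and $\left(\Theta,\Theta'\right):\left(f\times 1_{\partial I^{n}}\right)\times 1_{I}\to t$ a fiber homotopy with $\Theta(x,s,0)=\psi(x,s)$ for $s\in\partial I^{n}$ and likewise for the primed maps. Since $X\times I^{n}\times\{0\}$ and $X\times\partial I^{n}\times I$ are closed subsets of $X\times K$ which cover it and agree on the overlap $X\times\partial I^{n}\times\{0\}$ precisely by the compatibility hypothesis, the pasting lemma produces a continuous $\Phi:X\times K\to P$ with $\Phi(x,u,0)=\psi(x,u)$ for $u\in I^{n}$ and $\Phi(x,s,v)=\Theta(x,s,v)$ for $s\in\partial I^{n}$; define $\Phi':X'\times K\to P'$ in the same way. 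Checking $t\circ\Phi=\Phi'\circ\left(f\times 1_{K}\right)$ separately on the two closed pieces, using that $\left(\psi,\psi'\right)$ and $\left(\Theta,\Theta'\right)$ are morphisms, shows that $\left(\Phi,\Phi'\right):f\times 1_{K}\to t$ is a morphism. Finally put $\Delta:=\Phi\circ\left(1_{X}\times\rho\right)$ and $\Delta':=\Phi'\circ\left(1_{X'}\times\rho\right)$. From $\left(f\times 1_{K}\right)\circ\left(1_{X}\times\rho\right)=\left(1_{X'}\times\rho\right)\circ\left(\left(f\times 1_{I^{n}}\right)\times 1_{I}\right)$ one gets $t\circ\Delta=\Delta'\circ\left(\left(f\times 1_{I^{n}}\right)\times 1_{I}\right)$, so $\left(\Delta,\Delta'\right):\left(f\times 1_{I^{n}}\right)\times 1_{I}\to t$ is a fiber homotopy; and since $\rho$ fixes $K$ pointwise while $(u,0),(s,v)\in K$ for $s\in\partial I^{n}$, we obtain $\Delta(x,u,0)=\Phi(x,u,0)=\psi(x,u)$ and $\Delta(x,s,v)=\Phi(x,s,v)=\Theta(x,s,v)$, and likewise on the primed side. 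These are exactly the conditions \eqref{GrindEQ__3_39_} and \eqref{GrindEQ__3_40_}, so $\left(1_{X}\times i_{\partial I^{n}},1_{X'}\times i_{\partial I^{n}}\right)$ is a fiber cofibration.

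I do not expect a genuine obstacle in this argument: it is essentially the classical characterization of a cofibration by a retraction, carried out in the cube factor so that it descends simultaneously along $f\times 1_{\partial I^{n}}$ and $f\times 1_{I^{n}}$. The only things demanding attention are the coordinate bookkeeping — the point of $X$ or $X'$, the cube coordinate, the homotopy parameter, and, after applying $\rho$, the retracted pair — and the routine continuity and commutativity verifications for $\Phi$, $\Phi'$, $\Delta$, $\Delta'$. If desired, the NDR-property of $(I^{n},\partial I^{n})$ can itself be reduced inductively to the trivial case $n=1$, where $\partial I=\{0,1\}$.
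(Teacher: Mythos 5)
Your proposal is correct and follows essentially the same route as the paper's own proof: the paper also glues $\psi$ and $\Theta$ over $A=\left(I^{n}\times\{0\}\right)\cup\left(\partial I^{n}\times I\right)$ to get a morphism $\left(\Delta_{1},\Delta_{1}'\right):f\times 1_{A}\to t$ and then composes with a retraction $r:I^{n}\times I\to A$ acting only on the cube coordinates, exactly your $\Phi$ and $\rho$. Your added observations — that the previous lemma does not apply directly, and that the retraction's independence of the $X$-coordinate is what makes the construction compatible with $f$ — are correct and, if anything, make the argument slightly more explicit than the paper's.
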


\begin{proof}
Let $\left(\psi ,\psi '\right):f\times 1_{I^{n} } \to t$ be a morphism and $\left({\rm \Theta },{\rm \Theta }'\right):f\times 1_{\partial I^{n} } \times 1_{I^{2} } \to t$ be a fiber homotopy such that
\begin{equation} \label{GrindEQ__3_57_} 
\left({\rm \Theta }_{|X\times \partial I^{n} \times \left\{0\right\}} ,{\rm \Theta }_{|X'\times \partial I^{n} \times \left\{0\right\}}^{'} \right)=\left(\psi ,\psi '\right)\circ \left(1_{X} \times i_{\partial I^{n} } ,1_{X'} \times i_{\partial I^{n} } \right).       
\end{equation} 
Let $A=I^{n} \times \left\{o\right\}\mathop{\cup }\nolimits^{} \left(\partial I^{n} \times I\right),$   $A=I^{n} \times I$ and $r:B\to A$ be a corresponding retraction. Consider mappings ${\rm \Delta }_{1} :X\times A\to P$ and ${\rm \Delta }_{1}^{'} :X'\times A\to P'$ which are given by
\begin{equation} \label{GrindEQ__3_58_} 
{\rm \Delta }_{1} \left(x,e,t\right)=\left\{\begin{array}{c} {\Theta \left(x,e,t\right),\; if\; e\in \partial I^{n} } \\ {\psi \left(x,e,t\right),\; if\; t=0,} \end{array}\right.  
\end{equation} 
\begin{equation} \label{GrindEQ__3_59_} 
{\rm \Delta }_{1}^{'} \left(x',e,t\right)=\left\{\begin{array}{c} {{\rm \Theta }'\left(x',e,t\right),\; if\; e\in \partial I^{n} } \\ {\psi '\left(x',e,t\right),\; if\; t=0.} \end{array}\right.  
\end{equation} 
By \eqref{GrindEQ__3_57_} the mappings ${\rm \Delta }_{1} $ and ${\rm \Delta }_{1}^{'} $ are well defined. Let show that the pair $\left({\rm \Delta }_{1} ,{\rm \Delta }_{1}^{'} \right)$ is a morphism from $f\times 1_{A} $ to $t$. Consider the following cases:

\begin{enumerate}
\item  If $e\in \partial I^{n} $, then
\[\left(t\circ {\rm \Delta }_{1} \right)\left(x,e,t\right)=\left(t\circ {\rm \Theta }\right)\left(x,e,t\right)={\rm \Theta }'\left(f\left(x\right),e,t\right)=\] 
\begin{equation} \label{GrindEQ__3_60_} 
{\rm \Delta }_{1}^{'} \left(f\left(x\right),e,t\right)=\left({\rm \Delta }_{1}^{'} \circ \left(f\times 1_{A} \right)\right)\left(x,e,t\right). 
\end{equation} 

\item  If $t=0$, then
\[\left(t\circ {\rm \Delta }_{1} \right)\left(x,e,0\right)=\left(t\circ \psi \right)\left(x,e,0\right)=\psi '\left(f\left(x\right),e,0\right)=\] 
\begin{equation} \label{GrindEQ__3_61_} 
{\rm \Delta }_{1}^{'} \left(f\left(x\right),e,0\right)=\left({\rm \Delta }_{1}^{'} \circ \left(f\times 1_{A} \right)\right)\left(x,e,0\right). 
\end{equation} 
\end{enumerate}
Let  ${\rm \Delta }:X\times I^{n} \times I\to P$ and ${\rm \Delta }':X'\times I^{n} \times I\to P'$ be the mappings defined by 
\begin{equation} \label{GrindEQ__3_62_} 
{\rm \Delta }\left(x,e,t\right)={\rm \Delta }_{1} \left(x,r\left(e,t\right)\right), 
\end{equation} 
\begin{equation} \label{GrindEQ__3_63_} 
{\rm \Delta }'\left(x',e,t\right)={\rm \Delta }_{1}^{'} \left(x',r\left(e,t\right)\right). 
\end{equation} 
It is clear that $\left({\rm \Delta },{\rm \Delta }'\right)$ is a morphism from $f\times 1_{I} \times 1_{I} $ to $t$. So it remains to show that 
\begin{equation} \label{GrindEQ__3_64_} 
\left({\rm \Delta }_{|X\times I^{n} \times \left\{0\right\}} ,{\rm \Delta }_{|X'\times I^{n} \times \left\{0\right\}}^{'} \right)=\left(\psi ,\psi '\right), 
\end{equation} 
\begin{equation} \label{GrindEQ__3_65_} 
\left({\rm \Theta },{\rm \Theta }'\right)=\left({\rm \Delta },{\rm \Delta }'\right)\circ \left(1_{X} \times i_{\partial I^{n} } \times 1_{I} ,1_{X'} \times i_{\partial I^{n} } \times 1_{I} \right).       
\end{equation} 
Indeed, by \eqref{GrindEQ__3_62_}, \eqref{GrindEQ__3_58_}, \eqref{GrindEQ__3_63_} and \eqref{GrindEQ__3_59_} we have
\begin{equation} \label{GrindEQ__3_66_} 
{\rm \Delta }\left(x,e,0\right)={\rm \Delta }_{1} \left(x,r\left(e,0\right)\right)={\rm \Delta }_{1} \left(x,e,0\right)=\psi \left(x,e\right), 
\end{equation} 
\begin{equation} \label{GrindEQ__3_67_} 
{\rm \Delta }'\left(x',e,0\right)={\rm \Delta }_{1}^{'} \left(x',r\left(e,0\right)\right)={\rm \Delta }_{1}^{'} \left(x',e,0\right)=\psi '\left(x',e\right). 
\end{equation} 
Therefore, \eqref{GrindEQ__3_64_} is fulfilled.

To prove \eqref{GrindEQ__3_65_}, consider any  $\left(x,e,t\right)\in X\times \partial I^{n} \times I$ and $\left(x',e,t\right)\in X'\times \partial I^{n} \times I$. By \eqref{GrindEQ__3_62_}, \eqref{GrindEQ__3_58_}, \eqref{GrindEQ__3_63_} and \eqref{GrindEQ__3_59_} we have
\[\left({\rm \Delta }\circ \left(1_{X} \times i_{\partial I^{n} } \times 1_{I} \right)\right)\left(x,e,t\right)={\rm \Delta }\left(x,e,t\right)={\rm \Delta }_{1} \left(x,r\left(e,t\right)\right)=\] 
\begin{equation} \label{GrindEQ__3_68_} 
\Delta_1(x,e,t)=\Theta(x,e,t).                              
\end{equation} 

\[\left({\rm \Delta }'\circ \left(1_{X'} \times i_{\partial I^{n} } \times 1_{I} \right)\right)\left(x',e,t\right)={\rm \Delta }'\left(x',e,t\right)={\rm \Delta }_{1}^{'} \left(x',r\left(e,t\right)\right)=\] 
\begin{equation} \label{GrindEQ__3_69_} 
{\rm \Delta }_{1}^{'} \left(x',e,t\right)={\rm \Theta }'\left(x',e,t\right). 
\end{equation} 
\end{proof}

\begin{theorem}
 For every strong fiber expansion $\mathbf{\left(p,p'\right)}:f\to \mathbf f$, if for $\lambda \in {\rm \Lambda }$, $\left(\sigma _{\lambda } ,\sigma _{\lambda }' \right):f_{\lambda } \times 1_{\partial I^{2} } \to t$ is a morphism and $\left({\rm \Theta },{\rm \Theta }'\right):f\times 1_{I^{2} } \to t$ is a fiber homotopy such that  
\[\left({\rm \Theta }_{|X\times \partial I^{2} } ,{\rm \Theta }_{|X'\times \partial I^{2} }^{'} \right)=\left({\rm \Theta },{\rm \Theta }'\right)\circ \left(i_{X\times \partial I^{2} } ,i_{X'\times \partial I^{2} } \right)=\] 
\begin{equation} \label{GrindEQ__3_70_} 
\left(\sigma _{\lambda } ,\sigma _{\lambda }' \right)\circ \left(p_{\lambda } \times 1_{\partial I^{2} } ,p_{\lambda }' \times 1_{\partial I^{2} } \right) ,                                
\end{equation} 
then there exist a $\lambda '\ge \lambda $ and fiber homotopies $\left({\rm \Delta },{\rm \Delta }'\right):f_{\lambda } \times 1_{I^{2} } \to t$ and $\left({\rm \Gamma },{\rm \Gamma }'\right):f_{\lambda } \times 1_{I^{2} } \times 1_{I} \to t$ such that 
\[\left({\rm \Delta }_{|X_{\lambda '} \times \partial I^{2} } ,{\rm \Delta }_{|X_{\lambda '}^{'} \times \partial I^{2} }^{'} \right)=\left({\rm \Delta },{\rm \Delta }'\right)\circ \left(i_{X_{\lambda '} \times \partial I^{2} } ,i_{X_{\lambda '}^{'} \times \partial I^{2} } \right)=\] 
\begin{equation} \label{GrindEQ__3_71_} 
\left(\sigma _{\lambda } ,\sigma _{\lambda }' \right)\circ \left(p_{\lambda \lambda '} \times 1_{\partial I} ,p_{\lambda \lambda '}' \times 1_{\partial I} \right) 
\end{equation} 
and the homotopy $\left({\rm \Gamma },{\rm \Gamma }'\right)$ connects $\left({\rm \Theta },{\rm \Theta }'\right)$ and $\left({\rm \Delta },{\rm \Delta }'\right)\circ \left(p_{\lambda } \times 1_{I^{2} } ,p_{\lambda }' \times 1_{I^{2} } \right)$ and is fixed on the submap  $f\times 1_{\partial I^{2} } :X\times 1_{\partial I^{2} } \to X'\times 1_{\partial I^{2} } $.
\end{theorem}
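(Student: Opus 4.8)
The plan is to bootstrap from the one‑dimensional condition SF2 of a strong fiber expansion, using the boundary decomposition $\partial I^{2}=(\partial I\times I)\cup(I\times\partial I)$ and three ingredients already available: (a) the product theorem (Theorem 2.1), by which $\mathbf{(p,p')}\times 1_I:f\times 1_I\to\mathbf f\times 1_I$ is again a strong fiber expansion, $I$ being compact Hausdorff; (b) Proposition 1.3 together with the remarks preceding the product theorem, by which the mapping‑space map $t^{\#}$ over $C(I,-)$ is an $\mathbf{ANR}$‑map and fiber homotopies $(-)\times 1_I\to t$ correspond to morphisms $(-)\to t^{\#}$; and (c) the fiber cofibration $f_{\mu}\times 1_{\partial I^{2}}\to f_{\mu}\times 1_{I^{2}}$ of Lemma 2.4 (with $n=2$) and the retraction $I^{2}\times 1_I\to(I^{2}\times\{0\})\cup(\partial I^{2}\times 1_I)$ exploited in its proof.

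Step 1 disposes of the face $I\times\partial I$. Writing $I^{2}$ with coordinates $(s,u)$ and letting $u$ be the extra interval of the strong fiber expansion $\mathbf{(p,p')}\times 1_I$, one reads $\left(\Theta,\Theta'\right)$ as a fiber homotopy $(f\times 1_I)\times 1_I\to t$; since $I\times\partial I\subseteq\partial I^{2}$, the hypothesis \eqref{GrindEQ__3_70_} supplies exactly the boundary datum required, and the reformulated SF2 for $\mathbf{(p,p')}\times 1_I$ produces a $\lambda''\ge\lambda$, a fiber homotopy $\left(\Delta^{1},\Delta^{1\prime}\right):f_{\lambda''}\times 1_{I^{2}}\to t$ restricting on $X_{\lambda''}\times I\times\partial I$ to $\left(\sigma_{\lambda},\sigma_{\lambda}'\right)\circ\left(p_{\lambda\lambda''}\times 1_{I\times\partial I},p_{\lambda\lambda''}'\times 1_{I\times\partial I}\right)$, and a fiber homotopy $\left(\Gamma^{1},\Gamma^{1\prime}\right):f\times 1_{I^{2}}\times 1_I\to t$ connecting $\left(\Theta,\Theta'\right)$ with $\left(\Delta^{1},\Delta^{1\prime}\right)\circ\left(p_{\lambda''}\times 1_{I^{2}},p_{\lambda''}'\times 1_{I^{2}}\right)$ and fixed on the submap $f\times 1_{I\times\partial I}$.

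Step 2 corrects the complementary face $\partial I\times I$. Restricting $\left(\Gamma^{1},\Gamma^{1\prime}\right)$ over $X\times(\partial I\times I)$ gives a fiber homotopy between $\left(\sigma_{\lambda},\sigma_{\lambda}'\right)\circ(p_{\lambda}\times\cdots)$ and $\left(\Delta^{1},\Delta^{1\prime}\right)_{|\partial I\times I}\circ(p_{\lambda''}\times\cdots)$ which — because $\left(\Gamma^{1},\Gamma^{1\prime}\right)$ is fixed on $f\times 1_{I\times\partial I}$ — is constant along the corners $\partial I\times\partial I$, and whose two ends both factor through the finite stage $\lambda''$. As $\partial I\times I$ is two copies of $I$, adjoining the $I$‑variable turns this into a fiber homotopy of morphisms into $t^{\#}$ whose compositions with $\left(p_{\lambda''},p_{\lambda''}'\right)$ agree up to fiber homotopy; applying SF2 of $\mathbf{(p,p')}$ to the $\mathbf{ANR}$‑map $t^{\#}$, together with the "freeze where the morphisms already coincide" clause of Theorem 1.5, I obtain a $\lambda'\ge\lambda''$, a fiber homotopy $\left(\Xi,\Xi'\right)$ over $f_{\lambda'}$ that is constant on $\partial I\times\partial I$ and runs from $\left(\Delta^{1},\Delta^{1\prime}\right)_{|\partial I\times I}\circ(p_{\lambda''\lambda'}\times\cdots)$ to $\left(\sigma_{\lambda},\sigma_{\lambda}'\right)_{|\partial I\times I}\circ(p_{\lambda\lambda'}\times\cdots)$, plus a higher homotopy connecting $\left(\Gamma^{1},\Gamma^{1\prime}\right)_{|\partial I\times I}$ with $\left(\Xi,\Xi'\right)\circ(p_{\lambda'}\times\cdots)$ rel its ends. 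Gluing $\left(\Xi,\Xi'\right)$ on $\partial I\times I$ with the constant homotopy on $I\times\partial I$ (legitimate since the corners are frozen and since $\left(\Delta^{1},\Delta^{1\prime}\right)\circ(p_{\lambda''\lambda'}\times\cdots)$ is already correct on $I\times\partial I$) yields a fiber homotopy of the restriction of $\left(\Delta^{1},\Delta^{1\prime}\right)\circ\left(p_{\lambda''\lambda'}\times 1_{I^{2}},\cdots\right)$ to $f_{\lambda'}\times 1_{\partial I^{2}}$ ending at $\left(\sigma_{\lambda},\sigma_{\lambda}'\right)\circ\left(p_{\lambda\lambda'}\times 1_{\partial I^{2}},p_{\lambda\lambda'}'\times 1_{\partial I^{2}}\right)$; feeding it into the fiber cofibration of Lemma 2.4 extends it to a fiber homotopy of $\left(\Delta^{1},\Delta^{1\prime}\right)\circ\left(p_{\lambda''\lambda'}\times 1_{I^{2}},\cdots\right)$ over all of $f_{\lambda'}\times 1_{I^{2}}$, whose terminal morphism is the required $\left(\Delta,\Delta'\right)$ satisfying \eqref{GrindEQ__3_71_}. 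Finally $\left(\Gamma,\Gamma'\right)$ is assembled by concatenating $\left(\Gamma^{1},\Gamma^{1\prime}\right)$, the $\left(p_{\lambda'},p_{\lambda'}'\right)$‑image of the above cofibration homotopy, and the higher homotopy produced by SF2 in this step, and then smoothing rel its two ends by means of the retraction $I^{2}\times 1_I\to(I^{2}\times\{0\})\cup(\partial I^{2}\times 1_I)$ from the proof of Lemma 2.4; on $I\times\partial I$ every piece is constant while on $\partial I\times I$ the first two pieces cancel against the carried higher homotopy, so the result is fixed on the whole submap $f\times 1_{\partial I^{2}}$.

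The hard part is Step 2. Step 1 is only a mechanical re‑reading of SF2 through the product theorem, but getting $\left(\Gamma,\Gamma'\right)$ fixed on the \emph{entire} boundary $f\times 1_{\partial I^{2}}$ — and not merely on the single face $I\times\partial I$ settled in Step 1 — forces one to reconcile the homotopy obtained on $\partial I\times I$ with the constant one on $I\times\partial I$ along the corners $\partial I\times\partial I$, to track the auxiliary higher homotopy that SF2 supplies so that the concatenation over $\partial I\times I$ contracts to the constant one, to keep the successive cofinal raisings $\lambda\le\lambda''\le\lambda'$ under control, and to verify every gluing identity on the nose. The cofibration Lemma 2.4 (and, where a relative form of it is convenient, the criterion of Lemma 2.3) and the "freeze where the morphisms agree" part of Theorem 1.5 are precisely the devices that make these extensions and rectifications available.
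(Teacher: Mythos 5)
Your Step 1 is sound, but Step 2 contains a genuine gap at the corners $\partial I\times\partial I$. After correcting the face $I\times\partial I$, you need a homotopy $\left(\Xi,\Xi'\right)$ over the complementary face $\partial I\times I$ that deforms $\left(\Delta^{1},\Delta^{1\prime}\right)\circ\left(p_{\lambda''\lambda'}\times\cdots\right)$ into $\left(\sigma_{\lambda},\sigma_{\lambda}'\right)\circ\left(p_{\lambda\lambda'}\times\cdots\right)$ \emph{and is constant on the corners}, since otherwise it does not glue with the constant homotopy on $I\times\partial I$ to give a homotopy over all of $\partial I^{2}$. You extract $\left(\Xi,\Xi'\right)$ from condition SF2 of the strong fiber expansion, but SF2 as defined (Definition 1.2 / its reformulation via \eqref{GrindEQ__3_3_}--\eqref{GrindEQ__3_4_}) carries no clause guaranteeing that the connecting homotopy is stationary at points where the two morphisms already agree. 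The ``freeze'' clause you invoke belongs to Theorem 1.5, which concerns the homotopy built from $\left(\alpha,\alpha'\right)$-nearness into an $\mathbf{ANR}$-map; it is not part of the abstract SF2 data available for an arbitrary strong fiber expansion, and the homotopy SF2 delivers is a different object (in the resolution case it is assembled from three pieces $K$, $\Delta$, $\Lambda$ in Lemma 1.6, with no pointwise constancy tracked). Without corner constancy the glued map on $f_{\lambda'}\times 1_{\partial I^{2}}\times 1_{I}$ is not well defined, and the subsequent cofibration extension and the assembly of $\left(\Gamma,\Gamma'\right)$ both collapse.

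The paper's proof avoids the face decomposition entirely, and this is precisely why it works: it first applies SF1 of the product expansion $\left(\mathbf{p}\times 1_{I^{2}},\mathbf{p'}\times 1_{I^{2}}\right)$ to approximate $\left(\Theta,\Theta'\right)$ as a morphism out of $f\times 1_{I^{2}}$ with no boundary control, and then applies SF2 of $\left(\mathbf{p}\times 1_{\partial I^{2}},\mathbf{p'}\times 1_{\partial I^{2}}\right)$ --- Theorem 2.1 with $Y=\partial I^{2}$ taken as a single compact Hausdorff space --- to correct the \emph{entire} boundary in one step, so no gluing along $\partial I\times\partial I$ ever arises; only afterwards does the fiber cofibration of Lemma 2.4 extend the boundary correction over $I^{2}$. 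To repair your argument you would either have to prove a relative (rel corners) strengthening of SF2, or abandon the decomposition and apply the product theorem to $Y=\partial I^{2}$ directly as the paper does.
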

\begin{proof}
 Consider any morphism $\left(\sigma _{\lambda } ,\sigma _{\lambda }' \right):f_{\lambda } \times 1_{\partial I^{2} } \to t$ and a fiber homotopy $\left({\rm \Theta },{\rm \Theta }'\right):f\times 1_{I^{2} } \to t$ such that \eqref{GrindEQ__3_70_} is fulfilled. By theorem 2.1 the $\left( \mathbf{P}\times 1_{I^{2} } :\mathbf{P'}\times 1_{I^{2} } \right):f\times 1_{I^{2} } \to \mathbf{f}\times 1_{I^{2} } $ is a strong fiber expansion and so by the SF1), there exist a $\lambda ''\ge \lambda $, a morphism $\left({\rm \Delta }_{1} ,{\rm \Delta }_{1}^{'} \right):f_{\lambda ''} \times 1_{\partial I^{2} } \to t$  and a fiber homotopy $\left({\rm \Gamma }_{1} ,{\rm \Gamma }_{1}^{'} \right):f_{\lambda } \times 1_{I^{2} } \times 1_{I} \to t$
\begin{equation} \label{GrindEQ__3_72_} 
\left( {{\text{ }\!\!\Gamma\!\!\text{ }}_{1}}_{|X\times {{I}^{2}}\times \left\{ 0 \right\}},\text{ }\!\!\Gamma\!\!\text{ }{{_{1}^{'}}_{|{X}'\times {{I}^{2}}\times \left\{ 0 \right\}}} \right)=\left( {{\text{ }\!\!\Delta\!\!\text{ }}_{1}},\text{ }\!\!\Delta\!\!\text{ }_{1}^{'} \right)\circ \left( {{p}_{{{\lambda }''}}}\times {{1}_{{{I}^{2}}}},p_{{{\lambda }''}}' \times {{1}_{{{I}^{2}}}} \right), 
\end{equation} 
\begin{equation} \label{GrindEQ__3_73_} 
\left( {{\text{ }\!\!\Gamma\!\!\text{ }}_{1}}_{|X\times {{I}^{2}}\times \left\{ 1 \right\}},\text{ }\!\!\Gamma\!\!\text{ }{{_{1}^{'}}_{|{X}'\times {{I}^{2}}\times \left\{ 1 \right\}}} \right)=\left( \text{ }\!\!\Theta\!\!\text{ },\text{ }\!\!\Theta\!\!\text{ }' \right).
\end{equation} 
Consider the morphism $\left(1_{X_{\lambda ''} } \times i_{\partial I^{2} } ,1_{X_{\lambda ''}^{'} } \times i_{\partial I^{2} } \right):f_{\lambda ''} \times 1_{\partial I^{2} } \to f_{\lambda ''} \times 1_{I^{2} } $. In this case the fiber homotopy
\begin{equation} \label{GrindEQ__3_74_} 
\left({\rm \Gamma }_{2} ,{\rm \Gamma }_{2}^{'} \right)=\left({\rm \Gamma }_{1} ,{\rm \Gamma }_{1}^{'} \right)\circ \left(1_{X} \times i_{\partial I^{2} } \times 1_{I} ,1_{X'} \times i_{\partial I^{2} } \times 1_{I} \right) 
\end{equation} 
connects the following morphisms
\begin{equation} \label{GrindEQ__3_75_} 
\left(\left({\rm \Delta }_{1} ,{\rm \Delta }_{1}^{'} \right)\circ \left(1_{X_{\lambda ''} } \times i_{\partial I^{2} } ,1_{X_{\lambda ''}^{'} } \times i_{\partial I^{2} } \right)\right)\circ \; \left(p_{\lambda ''} \times 1_{\partial I^{2} } ,p_{\lambda ''}^{'} \times 1_{\partial I^{2} } \right):f\times 1_{\partial I^{2} } \to t, 
\end{equation} 
\begin{equation} \label{GrindEQ__3_76_} 
\left(\left(\sigma _{\lambda } ,\sigma _{\lambda }' \right)\circ \left(p_{\lambda \lambda ^{''} } \times 1_{\partial I^{2} } ,p_{\lambda \lambda ^{''} }' \times 1_{\partial I^{2} } \right)\right)\circ \left(p_{\lambda ''} \times 1_{\partial I^{2} } ,p_{\lambda ''}' \times 1_{\partial I^{2} } \right)\; :f\times 1_{\partial I^{2} } \to t. 
\end{equation} 
On the other hand, by theorem 2.1 the morphism $\left(\mathbf{p}\times 1_{\partial I^{2} } ,\mathbf{p'}\times 1_{\partial I^{2} } \right)=\left\{\left(p_{\lambda } \times 1_{\partial I^{2} } ,p'_{\lambda } \times 1_{\partial I^{2} } \right) \right\}:f\times 1_{\partial I^{2} } \to \mathbf{f}\times 1_{\partial I^{2}} $ is an expansion and so by the SF2), there exist $\lambda '\ge \lambda ''$ and a fiber homotopy $\left({\rm {\rm K} },{\rm {\rm K} }'\right):f_{\lambda '} \times 1_{\partial I^{2} } \times 1_{I} \to t$ such that
\[ \left( {\rm K }_{|{X_{{\lambda'}  \times \partial {I^2} \times \left\{ 0 \right\}} }}, {\rm K' }_{|{{X'}_{{\lambda'}  \times \partial {I^2} \times \left\{ 0 \right\}} }}  \right)=\]
\begin{equation} \label{GrindEQ__3_78_} 
\left( \left(\Delta_1, \Delta'_1 \right) \circ \left({{1}_{{{X}_{{{\lambda }''}}}}}\times {{i}_{\partial {{I}^{2}}}}{{,1}_{X_{{{\lambda }''}}^{'}}}\times {{i}_{\partial {{I}^{2}}}}\right)   \right) \circ \left( {{p}_{{{\lambda }''}}}\times {{1}_{\partial {{I}^{2}}}},p_{{{\lambda }''}}^{'}\times {{1}_{\partial {{I}^{2}}}} \right),   
\end{equation} 
\[ \left( {\rm \Gamma }_{|{X_{{\lambda'}  \times \partial {I^2} \times \left\{ 1 \right\}} }}, {\rm \Gamma' }_{|{{X'}_{{\lambda'}  \times \partial {I^2} \times \left\{ 1 \right\}} }}  \right)=\]
\begin{equation} \label{GrindEQ__3_79_} 
\left( \left(\sigma_\lambda, \sigma'_\lambda \right) \circ \left( {{p}_{{\lambda \lambda'' }}}\times {{1}_{\partial {{I}^{2}}}},p'_{\lambda \lambda''}\times {{1}_{\partial {{I}^{2}}}} \right)   \right) \circ \left( {{p}_{{{\lambda }''}}}\times {{1}_{\partial {{I}^{2}}}},p_{{{\lambda }''}}'\times {{1}_{\partial {{I}^{2}}}} \right),   
\end{equation}
\begin{equation} \label{GrindEQ__3_80_} 
\left({\rm {\rm K} },{\rm {\rm K} }'\right)\circ \left(p_{\lambda '} \times 1_{\partial I^{2} } \times 1_{I} ,p'_{\lambda '} \times 1_{\partial I^{2} } \times 1_{I} \right)\varphi \cong \left({\rm \Gamma },{\rm \Gamma }'\right)\left(\; rel\; f\times 1_{\partial I^{2} } \times 1_{\partial I} \right). 
\end{equation} 
By lemma 2.4 the morphism $\left(1_{X_{\lambda '} } \times i_{\partial I^{2} } ,1_{X_{\lambda '}^{'} } \times i_{\partial I^{2} } \right):f_{\lambda '} \times 1_{\partial I^{2} } \to f_{\lambda ''} \times 1_{I^{2} } $ is a fiber cofibration. On the other hand, by \eqref{GrindEQ__3_79_} we have
\[\left({\rm {\rm K} }_{|X_{\lambda '} \times \partial I^{2} \times \left\{0\right\}} ,{\rm {\rm K} }_{|X_{\lambda '}^{'} \times \partial I^{2} \times \left\{0\right\}}^{{\rm '}} \right)=\] 
\[\left(\left({\rm \Delta }_{1} ,{\rm \Delta }_{1}^{'} \right)\circ \left(1_{X_{\lambda ''} } \times i_{\partial I^{2} } ,1_{X_{\lambda ''}^{'} } \times i_{\partial I^{2} } \right)\right)\circ \; \left(p_{\lambda ''} \times 1_{\partial I^{2} } ,p'_{\lambda ''} \times 1_{\partial I^{2} } \right)=\] 
\begin{equation} \label{GrindEQ__3_81_} 
\left(\left({\rm \Delta }_{1} ,{\rm \Delta }_{1}^{'} \right)\circ \left(p_{\lambda ''\lambda '} \times 1_{\partial I^{2} } ,p'_{\lambda ''\lambda '} \times 1_{\partial I^{2} } \right) \right) \circ \left(1_{X_{\lambda '} } \times i_{\partial I^{2} } ,1_{X_{\lambda '}^{'} } \times i_{\partial I^{2} } \right). 
\end{equation} 
Therefore, for the fiber homotopy $\left({\rm {\rm K} },{\rm {\rm K} }'\right):f_{\lambda '} \times 1_{\partial I^{2} } \times 1_{I} \to t$ there exists a fiber homotopy $\left({\rm {\rm K} }_{1} ,{\rm {\rm K} }_{1}^{'} \right):f_{\lambda '} \times 1_{I^{2} } \times 1_{I} \to t$ such that 
\begin{equation} \label{GrindEQ__3_82_} 
\left( {\rm K_1}_{|X_{\lambda'} \times I^2 \times \{0\} } , {\rm K'_1}_{|{X'}_{\lambda'} \times I^2 \times \{0\} }\right)=\left( {{\text{ }\!\!\Delta\!\!\text{ }}_{1}},\text{ }\!\!\Delta\!\!\text{ }_{1}^{'} \right)\circ \left( {{p}_{{\lambda }''{\lambda }'}}\times {{1}_{{{I}^{2}}}},p'_{{\lambda }''{\lambda }'}\times {{1}_{{{I}^{2}}}} \right), 
\end{equation} 
\begin{equation} \label{GrindEQ__3_83_} 
\left({\rm {\rm K} },{\rm {\rm K} }'\right)=\left({\rm {\rm K} }_{1} ,{\rm {\rm K} }_{1}^{'} \right)\circ \left(1_{X_{\lambda '} } \times i_{\partial I^{2} } \times 1_{I} ,1_{X_{\lambda '}^{'} } \times i_{\partial I^{2} } \times 1_{I} \right). 
\end{equation} 
Let $\left({\rm \Delta },{\rm \Delta }'\right):f_{\lambda '} \times 1_{I^{2} } \to t$ be a morphism defined by 
\begin{equation} \label{GrindEQ__3_84_} 
\left(\Delta,\Delta^{'} \right)=\left( {{\rm K }_{1}}_{|{{X}_{{{\lambda }'}}}\times {{I}^{2}}\times \left\{ 1 \right\}},\text{ }\!\!\rm K\!\!\text{ }{{_{1}^{'}}_{|X_{{{\lambda }'}}^{'}\times {{I}^{2}}\times \left\{ 1 \right\}}} \right). 
\end{equation} 
In this case, by \eqref{GrindEQ__3_80_}, \eqref{GrindEQ__3_84_} and \eqref{GrindEQ__3_85_} we have
\[\left(\Delta,\text{{ }}\!\!\Delta\!\!\text{ }^{'} \right)\circ \left( {{1}_{{{X}_{{{\lambda }^{'}}}}}}\times {{i}_{\partial {{I}^{2}}}}{{,1}_{X_{{{\lambda }^{'}}}^{'}}}\times {{i}_{\partial {{I}^{2}}}} \right)=\left( {{\text{ }\!\!\Delta\!\!\text{ }}_{|{{X}_{{{\lambda }'}}}\times \partial {{I}^{2}}}},\text{ }\!\!\Delta\!\!\text{ }_{\text{ }\!\!|\!\!\text{ }X_{{{\lambda }^{'}}}^{'}\times \partial {{I}^{2}}\text{ }\!\!\}\!\!\text{ }}^{\text{ }\!\!^{'}\!\!\text{ }} \right)=\]
\begin{equation} \label{GrindEQ__3_85_} 
\left( {{\text{ }\!\!\rm K\!\!\text{ }}_{1}}_{|{{X}_{{{\lambda }^{'}}}}\times {{I}^{2}}\times \left\{ 1 \right\}},\text{ }\!\!\rm K\!\!\text{ }{{_{1}^{'}}_{|X_{{{\lambda }^{'}}}^{'}\times {{I}^{2}}\times \left\{ 1 \right\}}} \right)=\left( {{\sigma }_{\lambda }},\sigma _{\lambda }' \right)\circ \left( {{p}_{\lambda {{\lambda }^{'}}}}\times {{1}_{\partial {{I}^{2}}}},p_{\lambda {{\lambda }^{'}}}'\times {{1}_{\partial {{I}^{2}}}} \right).
\end{equation} 
Therefore, \eqref{GrindEQ__3_71_} is fulfilled. 

By \eqref{GrindEQ__3_81_} there is a fiber homotopy $\left({\rm \Lambda },{\rm \Lambda }'\right):f\times 1_{\partial I^{2} } \times 1_{I} \times 1_{I} \to t$ such that 
\begin{equation} \label{GrindEQ__3_86_} 
\left({\rm \Lambda }_{|X\times \partial I^{2} \times I\times \left\{0\right\}} ,{\rm \Lambda }_{|X'\times \partial I^{2} \times I\times \left\{0\right\}}^{{\rm '}} \right)=\left({\rm {\rm K} },{\rm {\rm K} }'\right)\circ \left(p_{\lambda '} \times 1_{\partial I^{2} } \times 1_{I} ,p'_{\lambda '} \times 1_{\partial I^{2} } \times 1_{I} \right), 
\end{equation} 
\begin{equation} \label{GrindEQ__3_87_} 
\left({\rm \Lambda }_{|X\times \partial I^{2} \times I\times \left\{1\right\}} ,{\rm \Lambda }_{|X'\times \partial I^{2} \times I\times \left\{1\right\}}^{{\rm '}} \right)=\left({\rm \Gamma }_{2} ,{\rm \Gamma }_{2}^{'} \right), 
\end{equation} 
\begin{equation} \label{GrindEQ__3_88_} 
\left({\rm \Lambda }_{|X\times \partial I^{2} \times \partial I\times \left\{t\right\}} ,{\rm \Lambda }_{|X'\times \partial I^{2} \times \partial I\times \left\{t\right\}}^{{\rm '}} \right)=\left({\rm \Lambda }_{|X\times \partial I^{2} \times \partial I\times \left\{0\right\}} ,{\rm \Lambda }_{|X'\times \partial I^{2} \times \partial I\times \left\{0\right\}}^{{\rm '}} \right). 
\end{equation} 
Let $\left({\rm {\rm M} },{\rm {\rm M} }'\right):f\times 1_{\partial I^{3} } \times 1_{I} \to t$ be a morphism defined by
\begin{equation} \label{GrindEQ__3_89_} 
{\rm {\rm M} }\left(x,,u,v,s,t\right)=\left\{\begin{array}{c} {\Lambda \left(x,u,v,t,s\right),\; if\; \left(u,v\right)\in \partial I^{2} } \\ {{\rm {\rm K} }_{1} \left(p_{\lambda '} \left(x\right),u,v,t\right),\; \; \; \; \; \; \; if\; s=0} \\ {\Gamma \left(x,u,v,t\right),\; \; \; \; \; \; \; \; \; \; \; \; \; \; if\; s=1,} \end{array}\right.  
\end{equation} 
\begin{equation} \label{GrindEQ__3_90_} 
{\rm {\rm M} }'\left(x',,u,v,s,t\right)=\left\{\begin{array}{c} {{\rm \Lambda }'\left(x',u,v,t,s\right),\; if\; \left(u,v\right)\in \partial I^{2} } \\ {{\rm {\rm K} }_{1}^{'} \left(p_{\lambda '}' \left(x'\right),u,v,t\right),\; \; \; \; \; \; \; if\; s=0} \\ {{\rm \Gamma }'\left(x',u,v,t\right),\; \; \; \; \; \; \; \; \; \; \; \; \; \; if\; s=1.} \end{array}\right.  
\end{equation} 
Note that by \eqref{GrindEQ__3_75_}, \eqref{GrindEQ__3_87_} and \eqref{GrindEQ__3_88_} the mappings ${\rm {\rm M} }$ and ${\rm {\rm M} }'$ are well defined. Let $\left({\rm \Delta }_{2} ,{\rm \Delta }_{2}^{{\rm '}} \right):f\times 1_{I^{3} } \to t$ be a morphism defined by
\begin{equation} \label{GrindEQ__3_91_} 
{\rm \Delta }_{2} \left(x,u,v,s\right)=\; {\rm \Delta }_{1} \left(p_{\lambda ''} \left(x\right),\; u,v\right), 
\end{equation} 
\begin{equation} \label{GrindEQ__3_92_} 
{\rm \Delta }_{2}^{'} \left(x',u,v,s\right)=\; {\rm \Delta }_{1}^{'} \left(p_{\lambda ''}' \left(x'\right),\; u,v\right). 
\end{equation} 
On the other hand,
\begin{equation} \label{GrindEQ__3_93_} 
\left({\rm {\rm M} }_{|X\times \partial I^{3} \times \left\{0\right\}} ,{\rm {\rm M} }_{|X'\times \partial I^{3} \times \left\{0\right\}}^{{\rm '}} \right)=\left({\rm \Delta }_{2} ,{\rm \Delta }_{2}^{{\rm '}} \right)\circ \left(1_{X} \times 1_{\partial I^{3} } ,1_{X} \times 1_{\partial I^{3} } \right). 
\end{equation} 
Indeed, let $t=0$ and consider three cases:

\begin{enumerate}
\item  If  $\left(u,v\right)\in \partial I^{2} $, then 
\[\left({\rm {\rm M} }\left(x,,u,v,s,0\right),{\rm {\rm M} }'\left(x',,u,v,s,0\right)\right)=\left({\rm \Lambda }\left(x,u,v,0,s\right),{\rm \Lambda }'\left(x',u,v,0,s\right)\right)=\] 

\[\left({\rm \Lambda }\left(x,u,v,0,0\right),{\rm \Lambda }'\left(x',u,v,0,0\right)\right)=\left({\rm {\rm K} }\left(p_{\lambda '} \left(x\right),u,v,0\right),{\rm {\rm K} }'\left(p_{\lambda '}' \left(x'\right),u,v,0\right)\right)=\] 
\begin{equation} \label{GrindEQ__3_94_} 
\left({\rm \Delta }_{1} \left(p_{\lambda ''} \left(x\right),\; u,v\right),{\rm \Delta }_{1}^{'} \left(p_{\lambda ''}' \left(x'\right),\; u,v\right)\right)=\left({\rm \Delta }_{2} \left(x,u,v,s\right),{\rm \Delta }_{2}^{'} \left(x',u,v,s\right)\right). 
\end{equation} 

\item  If $s=0$,  then 
\[\left({\rm {\rm M} }\left(x,,u,v,0,0\right),{\rm {\rm M} }'\left(x',,u,v,0,0\right)\right)=\left({\rm {\rm K} }\left(p_{\lambda ''} \left(x\right),u,v,0\right),{\rm {\rm K} }'\left(p_{\lambda ''}' \left(x'\right),u,v,0\right)\right)=\] 
\begin{equation} \label{GrindEQ__3_95_} 
\left({\rm \Delta }_{1} \left(p_{\lambda ''} \left(x\right),\; u,v\right),{\rm \Delta }_{1}^{'} \left(p_{\lambda ''}^{'} \left(x'\right),\; u,v\right)\right)=\left({\rm \Delta }_{2} \left(x,u,v,0\right),{\rm \Delta }_{2}^{'} \left(x',u,v,0\right)\right). 
\end{equation} 

\item  If $s=1$,  then 
\[\left({\rm {\rm M} }\left(x,,u,v,1,0\right),{\rm {\rm M} }'\left(x',,u,v,1,0\right)\right)=\left({\rm \Delta }_{1} \left(x,\; u,v,0\right),{\rm \Delta }_{1}^{'} \left(x',\; u,v,0\right)\right)=\] 
\begin{equation} \label{GrindEQ__3_96_} 
\left({\rm \Delta }_{1} \left(p_{\lambda ''} \left(x\right),\; u,v\right),{\rm \Delta }_{1}^{'} \left(p_{\lambda ''}' \left(x'\right),\; u,v\right)\right)=\left({\rm \Delta }_{2} \left(x,u,v,1\right),{\rm \Delta }_{2}^{'} \left(x',u,v,1\right)\right). 
\end{equation} 
\end{enumerate}
By lemma 2.4 the morphism $\left(1_{X} \times i_{\partial I^{3} } ,1_{X'} \times i_{\partial I^{3} } \right):f\times 1_{\partial I^{3} } \to f\times 1_{I^{3} } $ is a fiber cofibration and so by \eqref{GrindEQ__3_94_} there exists a fiber homotopy $\left({\rm {\rm N} },{\rm {\rm N} }'\right):f\times 1_{I^{3} } \times 1_{I} \to t$ such that 
\begin{equation} \label{GrindEQ__3_97_} 
\left({\rm {\rm N} }_{|X\times \partial I^{3} \times \left\{0\right\}} ,{\rm {\rm N} }_{|X'\times \partial I^{3} \times \left\{0\right\}}^{{\rm '}} \right)=\left({\rm \Delta }_{2} ,{\rm \Delta }_{2}^{{\rm '}} \right), 
\end{equation} 
\begin{equation} \label{GrindEQ__3_98_} 
\left({\rm {\rm M} },{\rm {\rm M} }'\right)=\left({\rm {\rm N} },{\rm {\rm N} }'\right)\circ \left(1_{X} \times 1_{\partial I^{3} } \times 1_{I} ,1_{X} \times 1_{\partial I^{3} } \times 1_{I} \right). 
\end{equation} 
Let $\left({\rm \Gamma },{\rm \Gamma }'\right):f_{\lambda } \times 1_{I^{3} } \to t$ be morphisms defined by
\begin{equation} \label{GrindEQ__3_99_} 
{\rm \Gamma }\left(x,u,v,s\right)={\rm {\rm N} }\left(x,u,v,s,1\right), 
\end{equation} 
\begin{equation} \label{GrindEQ__3_100_} 
{\rm \Gamma }'\left(x',u,v,s\right)={\rm {\rm N'} }\left(x,u,v,s,1\right). 
\end{equation} 
Now we will show that for the fiber homotopy $\left({\rm \Gamma },{\rm \Gamma }'\right)$ the \eqref{GrindEQ__3_70_} and \eqref{GrindEQ__3_71_} are fulfilled. Indeed,
\[\left({\rm \Gamma }\left(x,,u,v,0\right),{\rm \Gamma }'\left(x',,u,v,0\right)\right)=\left({\rm {\rm N} }\left(x,u,v,0,1\right),{\rm {\rm N} }'\left(x',u,v,0,1\right)\right)=\] 
\[\left({\rm {\rm K} }_{1} \left(p_{\lambda ''} \left(x\right),\; u,v,1\right),{\rm {\rm K} }_{1}^{'} \left(p_{\lambda ''}' \left(x'\right),\; u,v,1\right)\right)=\left({\rm \Delta }\left(p_{\lambda '} \left(x\right),\; u,v\right),{\rm \Delta }'\left(p_{\lambda '}' \left(x'\right),\; u,v\right)\right)=\] 
\begin{equation} \label{GrindEQ__3_101_} 
\left(\left({\rm \Delta }\circ \left(p_{\lambda '} \times 1_{I^{3} } \right)\right)\left(x,\; u,v\right),\left({\rm \Delta }'\circ \left(p_{\lambda '}' \times 1_{I^{3} } \right)\right)\left(x',\; u,v\right)\right), 
\end{equation} 

\[\left({\rm \Gamma }\left(x,,u,v,1\right),{\rm \Gamma }'\left(x',,u,v,1\right)\right)=\left({\rm {\rm N} }\left(x,u,v,1,1\right),{\rm {\rm N} }'\left(x',u,v,1,1\right)\right)=\] 
\begin{equation} \label{GrindEQ__3_102_} 
\left({\rm {\rm M} }\left(x,,u,v,1,1\right),{\rm {\rm M} }'\left(x',,u,v,1,1\right)\right)=\left({\rm \Theta }\left(x,\; u,v\right),{\rm \Theta }'\left(x',\; u,v\right)\right). 
\end{equation} 
Moreover, if $\left(u,v\right)\in \partial I^{2} $, then
\[\left({\rm \Gamma }\left(x,,u,v,t\right),{\rm \Gamma }'\left(x',,u,v,t\right)\right)=\left({\rm {\rm N} }\left(x,u,v,t,1\right),{\rm {\rm N} }'\left(x',u,v,t,1\right)\right)=\] 
\begin{equation} \label{GrindEQ__3_103_} 
\left({\rm {\rm M} }\left(x,,u,v,t,1\right),{\rm {\rm M} }'\left(x',,u,v,t,1\right)\right)=\left({\rm \Lambda }\left(x,,u,v,1,0\right),{\rm \Lambda }'\left(x',,u,v,1,0\right)\right). 
\end{equation} 
Therefore, $\left({\rm \Gamma }\left(x,,u,v,t\right),{\rm \Gamma }'\left(x',,u,v,t\right)\right)$ do not depend on $t$ whenewer $\left(u,v\right)\in \partial I^{2} $.
\end{proof}

\section{Strong fiber shape category}
\

Let $\mathbf{Mor_{CM}} $ be the category of continuous maps of compact metric spaces. By theorem 3.11 of \cite{7}, for each $f\in \mathbf{Mor_{CM}} $ there exist the inverse sequences $\mathbf{X}=\left\{X_{n} ,p_{n,n+1} ,N\right\}$, $\mathbf{X'}=\left\{X_{n}^{'} ,p_{n,n+1}^{'} ,N\right\}$ and a system $\mathbf{f}=\left\{f_{n} ,1_{N} \right\}:\mathbf{X}\to \mathbf X'$\textbf{ }such that:

\begin{enumerate}
	\item  $X= \varprojlim  \mathbf{X} $, $X= \varprojlim \mathbf{X'} $, $f= \varprojlim \mathbf{f} $;
	
	\item  $X_{n} $ and $X_{n}^{'} $ are compact $\mathbf{ANR}$-spaces;
	
	\item  $\left\{p_{n}' \right\} \circ f=\left\{f_{n} ,1_{N} \right\} \circ \left\{p_{n} \right\}$, where $\mathbf{p}=\left\{p_{n} \right\}: X\to \mathbf{X}$ and $\mathbf{q}=\left\{q_{n} \right\}: X'\to \mathbf{X'}$ are the inverse limits of $\mathbf X$ and $\mathbf X'$, respectively.
\end{enumerate}

On the other hand, by \cite{7}, any inverse limit in the category $\mathbf{Mor_{CM}}$ is a resolution and so each continuous map  $f \in \mathbf{Mor_{CM}}$ admits a compact $\mathbf{ANR}$-resolution $\mathbf{{\left(p,p'\right)}}:f \to \mathbf f$, where $\mathbf f$ is an inverse sequence of compact $\mathbf{ANR}$-maps. By this fact, in this section we will construct the category $\mathbf{CH(tow-Mor_{CM})}$ of inverse sequences of continuous maps of compact metric spaces and coherent homotopy classes of coherent mappings of inverse sequences.

Let $\mathbf{f}=\left\{f_{n} ,(p_{n,n+1} ,p_{n,n+1}' )\; ,\mathbb N\right\}$ and $\mathbf{g}=\left\{g_{m} ,(q_{m,m+1} ,q_{n,n+1}' )\; , \mathbb N\right\}$ be inverse sequences in the category $\mathbf {Mor_{CM}} $. The system  $\boldsymbol{\left(\varphi,\varphi'\right)}=\left\{\left(\varphi _{m} ,\varphi _{m}' \right),\left(\varphi _{m,m+1} ,\varphi _{m,m+1}' \right),\varphi \right\}\; :\mathbf{f}\to \mathbf{g}$\textbf{ }is called a coherent morphism, if $\varphi :\mathbb N\to \mathbb N$ is an increasing function, $\left(\varphi _{m} ,\varphi _{m}' \right):f_{\varphi \left(m\right)} \to g_{m} $ and $\left(\varphi _{m,m+1} ,\varphi _{m,m+1}' \right):f_{\varphi \left(m+1\right)} \times 1_{I} \to g_{m} $ are  morphisms such that 
\begin{equation} \label{GrindEQ__4_1_} 
\left(\varphi _{m,m+1} \left(x,0\right),\varphi _{m,m+1}' \left(x',0\right)\right)=\left(\varphi _{m} \left(p_{\varphi \left(m\right),\varphi \left(m+1\right)} \left(x\right)\right),\varphi _{m}' \left(p_{\varphi \left(m\right),\varphi \left(m+1\right)}' \left(x'\right)\right)\right),       
\end{equation} 
\begin{equation} \label{GrindEQ__4_2_} 
\left(\varphi _{m,m+1} \left(x,1\right),\varphi _{m,m+1}' \left(x',1\right)\right)=\left(q_{m,m+1} \left(\varphi _{m+1} \left(x\right)\right),q_{m,m+1}' \left(\varphi _{m+1}' \left(x'\right)\right)\right) .            
\end{equation} 

Let $\boldsymbol{ \left(\varphi ,\varphi '\right)},\; \boldsymbol{\left(\psi ,\psi '\right)} :\mathbf{f}\to \mathbf{ g}$ be two coherent morphisms. We will say that they are coherent homotopic if there exists a coherent morphism
\begin{equation} \label{GrindEQ__4_2.1_} 
\boldsymbol{\left(\Theta ,\Theta '\right)}=\left\{\left({\rm \Theta }_{m} ,{\rm \Theta }_{m}^{'} \right),\left({\rm \Theta }_{m,m+1} ,{\rm \Theta }_{m,m+1}^{'} \right),{\rm \Theta }\right\}\; :\mathbf{f}\times 1_{I} \to \mathbf{g}            
\end{equation}
such that\textbf{ }${\rm \Theta }\left(m\right)\ge \varphi \left(m\right),\; \psi \left(m\right)$ for each $m\in \mathbb N$ and the following is fulfilled  
\begin{equation} \label{GrindEQ__4_3_} 
\left({\rm \Theta }_{m+1} \left(x,0\right),{\rm \Theta }_{m+1}^{'} \left(x',0\right)\right)=\left(\varphi _{m} \left(p_{\varphi \left(m\right),{\rm \Theta }\left(m\right)} \left(x\right)\right),\varphi _{m}' \left(p_{\varphi \left(m\right),{\rm \Theta }\left(m\right)}' \left(x'\right)\right)\right),                     
\end{equation} 
\begin{equation} \label{GrindEQ__4_4_} 
\left({\rm \Theta }_{m+1} \left(x,1\right),{\rm \Theta }_{m+1}^{'} \left(x',1\right)\right)=\left(\psi _{m} \left(p_{\varphi \left(m\right),{\rm \Theta }\left(m\right)} \left(x\right)\right),\psi _{m}' \left(p_{\varphi \left(m\right),{\rm \Theta }\left(m\right)}' \left(x'\right)\right)\right),                    
\end{equation} 
\[\label{GrindEQ__4_5_} 
\left({\rm \Theta }_{m,m+1} \left(x,s,0\right),{\rm \Theta }_{m,m+1}^{'} \left(x,s,0\right)\right)=\]
\begin{equation} \left({\rm \Theta }_{m} \left(p_{{\rm \Theta }\left(m\right),{\rm \Theta }\left(m+1\right)} \left(x\right),s\right),{\rm \Theta }_{m}^{'} \left(p_{{\rm \Theta }\left(m\right),{\rm \Theta }\left(m+1\right)}' \left(x'\right),s\right)\right),           
\end{equation} 

\[\left({\rm \Theta }_{m,m+1} \left(x,s,1\right),{\rm \Theta }_{m,m+1}^{'} \left(x,s,1\right)\right)=\]
\begin{equation} \label{GrindEQ__4_6_} 
\left(q_{m,m+1} \left({\rm \Theta }_{m+1} \left(x\right),s\right),q_{m,m+1}^{'} \left({\rm \Theta }_{m+1}^{'} \left(x'\right),s\right)\right),                  
\end{equation} 

\[\left({\rm \Theta }_{m,m+1} \left(x,0,t\right),{\rm \Theta }_{m,m+1}^{'} \left(x,0,t\right)\right)=\]
\begin{equation} \label{GrindEQ__4_7_} 
\left(\varphi _{m,m+1} \left(p_{\varphi \left(m+1\right),{\rm \Theta }\left(m+1\right)} \left(x\right),t\right),\varphi _{m,m+1}' \left(p_{\varphi \left(m+1\right),{\rm \Theta }\left(m+1\right)}' \left(x'\right),t\right)\right),      
\end{equation} 

\[\left({\rm \Theta }_{m,m+1} \left(x,1,t\right),{\rm \Theta }_{m,m+1}^{'} \left(x,1,t\right)\right)=\]
\begin{equation} \label{GrindEQ__4_8_} 
\left(\psi _{m,m+1} \left(p_{\psi \left(m+1\right),{\rm \Theta }\left(m+1\right)} \left(x\right),t\right),\psi _{m,m+1}' \left(p_{\psi \left(m+1\right),{\rm \Theta }\left(m+1\right)}' \left(x'\right),t\right)\right).      
\end{equation} 
In this case, we will use the notation $\boldsymbol{ \left(\Theta ,\Theta '\right)}: \boldsymbol{\left(\varphi ,\varphi '\right)}\cong \boldsymbol{\left(\psi ,\psi '\right)}$.

Let $\boldsymbol{\left[\left(\varphi ,\varphi '\right)\right]}$ be the ~equivalent ~class ~of ~the ~coherent~ morphism~ $\boldsymbol{\left(\varphi ,\varphi '\right)}$. Denote the category of all inverse sequences of continuous maps of compact metric spaces and coherent homotopy classes $\boldsymbol{\left[\left(\varphi ,\varphi '\right)\right]}$ of a coherent morphisms $\boldsymbol{\left(\varphi ,\varphi '\right)}$ by ~$\mathbf{CH\left(tow-Mor_{CM} \right)}$. Let $\mathbf{CH\left(tow-Mor_{ANR} \right)}$ be the full subcategory of the category $\mathbf{CH\left(tow-Mor_{CM} \right)}$, the objects of which are inverse sequences of $\mathbf{ANR}$-maps. 

\begin{theorem}
If $\mathbf{\left(p,p'\right)}=\left\{\left(p_{\lambda } ,p_{\lambda }' \right)\; \right\}:f\to \mathbf{f}$ is a strong fiber $\mathbf{ANR}$-expansion of a continuous map  $f:X\to X'$ of compact metric spaces, then for each coherent morphism $\boldsymbol{\left(\varphi ,\varphi '\right)}:f\to \mathbf g$, where  $\mathbf{g}\in \mathbf{CH\left(tow-Mor_{ANR} \right)}$, there exists a coherent morphism $\boldsymbol{ \left(\Psi ,\Psi '\right)}:\mathbf{f} \to \mathbf g$ such that $\boldsymbol{\left(\varphi ,\varphi '\right)}$ is a coherent homotopic to $\boldsymbol{ \left(\Psi ,\Psi '\right)} \circ \mathbf{\left(p,p'\right)}.$
\end{theorem}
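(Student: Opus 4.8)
The plan is to construct $\boldsymbol{(\Psi,\Psi')}$ together with a coherent homotopy $\boldsymbol{(\Theta,\Theta')}\colon\boldsymbol{(\varphi,\varphi')}\cong\boldsymbol{(\Psi,\Psi')}\circ\mathbf{(p,p')}$ by induction on $m\in\mathbb N$, producing at the $m$-th stage an index $\Psi(m)\in\Lambda$ (with $\Psi$ increasing), morphisms $(\Psi_m,\Psi_m')\colon f_{\Psi(m)}\to g_m$ and $(\Psi_{m,m+1},\Psi_{m,m+1}')\colon f_{\Psi(m+1)}\times 1_I\to g_m$, and homotopy components $(\Theta_m,\Theta_m')\colon f\times 1_I\to g_m$ and $(\Theta_{m,m+1},\Theta_{m,m+1}')\colon f\times 1_{I^2}\to g_m$. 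Since the source $f=\{f\}$ is rudimentary, all bonding morphisms of the source in \eqref{GrindEQ__4_1_} and \eqref{GrindEQ__4_3_}--\eqref{GrindEQ__4_8_} are identities, so the conditions to be secured reduce to: $(\Psi_{m,m+1},\Psi_{m,m+1}')$ joins $(\Psi_m,\Psi_m')\circ(p_{\Psi(m),\Psi(m+1)},p_{\Psi(m),\Psi(m+1)}')$ to $(q_{m,m+1},q_{m,m+1}')\circ(\Psi_{m+1},\Psi_{m+1}')$; $(\Theta_m,\Theta_m')$ joins $(\varphi_m,\varphi_m')$ to $(\Psi_m,\Psi_m')\circ(p_{\Psi(m)},p_{\Psi(m)}')$; and $(\Theta_{m,m+1},\Theta_{m,m+1}')$ has on $f\times 1_{\partial I^2}$ the four edge values $(\Theta_m,\Theta_m')$, $(q_{m,m+1},q_{m,m+1}')\circ(\Theta_{m+1},\Theta_{m+1}')$, $(\varphi_{m,m+1},\varphi_{m,m+1}')$, and $(\Psi_{m,m+1},\Psi_{m,m+1}')\circ(p_{\Psi(m+1)}\times 1_I,p_{\Psi(m+1)}'\times 1_I)$. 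Everything is carried out with morphism pairs, so the primed halves are handled automatically. For $m=1$ one simply applies SF1) to the $\mathbf{ANR}$-map $g_1$ and $(\varphi_1,\varphi_1')\colon f\to g_1$, obtaining $\Psi(1)$, $(\Psi_1,\Psi_1')$ and a fiber homotopy $(\Theta_1,\Theta_1')\colon(\varphi_1,\varphi_1')\cong(\Psi_1,\Psi_1')\circ(p_{\Psi(1)},p_{\Psi(1)}')$.

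For the step $m\to m+1$, first apply SF1) to the $\mathbf{ANR}$-map $g_{m+1}$ and $(\varphi_{m+1},\varphi_{m+1}')\colon f\to g_{m+1}$ and, using directedness of $\Lambda$, pass to an index $\mu\ge\Psi(m)$ for which one has a morphism $(\hat\Psi_{m+1},\hat\Psi_{m+1}')\colon f_\mu\to g_{m+1}$ and a fiber homotopy $(\Theta_{m+1},\Theta_{m+1}')\colon(\varphi_{m+1},\varphi_{m+1}')\cong(\hat\Psi_{m+1},\hat\Psi_{m+1}')\circ(p_\mu,p_\mu')$. Next form the concatenated fiber homotopy $f\times 1_I\to g_m$
\[
(H,H')=\overline{(\Theta_m,\Theta_m')}\ast(\varphi_{m,m+1},\varphi_{m,m+1}')\ast\big((q_{m,m+1},q_{m,m+1}')\circ(\Theta_{m+1},\Theta_{m+1}')\big),
\]
where $\ast$ is concatenation and $\overline{(\cdot)}$ the time-reversed homotopy; using \eqref{GrindEQ__4_1_}, \eqref{GrindEQ__4_2_} for $\boldsymbol{(\varphi,\varphi')}$ and \eqref{GrindEQ__4_4_} at index $m$, this runs from $(\Psi_m,\Psi_m')\circ(p_{\Psi(m)},p_{\Psi(m)}')$ to $(q_{m,m+1},q_{m,m+1}')\circ(\hat\Psi_{m+1},\hat\Psi_{m+1}')\circ(p_\mu,p_\mu')$, and it is a fiber homotopy because a concatenation of fiber homotopies is one. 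Now apply SF2) with $t=g_m$, index $\mu$, and the two morphisms $(\Psi_m,\Psi_m')\circ(p_{\Psi(m),\mu},p_{\Psi(m),\mu}')$ and $(q_{m,m+1},q_{m,m+1}')\circ(\hat\Psi_{m+1},\hat\Psi_{m+1}')$ from $f_\mu$ to $g_m$, whose compositions with $(p_\mu,p_\mu')$ are joined by $(H,H')$. This yields $\lambda'\ge\mu$, a fiber homotopy $(\Delta,\Delta')\colon f_{\lambda'}\times 1_I\to g_m$ from $(\Psi_m,\Psi_m')\circ(p_{\Psi(m),\lambda'},\ldots)$ to $(q_{m,m+1},q_{m,m+1}')\circ(\hat\Psi_{m+1},\hat\Psi_{m+1}')\circ(p_{\mu\lambda'},\ldots)$, and a fiber homotopy $(\Gamma,\Gamma')\colon f\times 1_{I^2}\to g_m$ joining $(H,H')$ to $(\Delta,\Delta')\circ(p_{\lambda'}\times 1_I,p_{\lambda'}'\times 1_I)$ and fixed on $f\times 1_{\partial I}$. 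Put $\Psi(m+1):=\lambda'$, $(\Psi_{m+1},\Psi_{m+1}'):=(\hat\Psi_{m+1},\hat\Psi_{m+1}')\circ(p_{\mu\lambda'},p_{\mu\lambda'}')$ and $(\Psi_{m,m+1},\Psi_{m,m+1}'):=(\Delta,\Delta')$; then $\Psi$ remains increasing, \eqref{GrindEQ__4_1_}--\eqref{GrindEQ__4_2_} hold for $(\Psi_{m,m+1},\Psi_{m,m+1}')$, and $(\Theta_{m+1},\Theta_{m+1}')$ is a fiber homotopy $(\varphi_{m+1},\varphi_{m+1}')\cong(\Psi_{m+1},\Psi_{m+1}')\circ(p_{\Psi(m+1)},p_{\Psi(m+1)}')$, giving \eqref{GrindEQ__4_3_}--\eqref{GrindEQ__4_4_} at index $m+1$.

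It remains to produce $(\Theta_{m,m+1},\Theta_{m,m+1}')$. Regard $(\Gamma,\Gamma')$ as a morphism on $f\times 1_I\times 1_I$ with parameters $(\tau,\rho)$: along $\rho=0$ it is $(H,H')$, along $\rho=1$ it is $(\Psi_{m,m+1},\Psi_{m,m+1}')\circ(p_{\Psi(m+1)}\times 1_I,\ldots)$, and, being fixed on $f\times 1_{\partial I}$, it is independent of $\rho$ for $\tau\in\{0,1\}$, with constant values $(\Psi_m,\Psi_m')\circ(p_{\Psi(m)},\ldots)$ and $(q_{m,m+1},q_{m,m+1}')\circ(\Psi_{m+1},\Psi_{m+1}')\circ(p_{\Psi(m+1)},\ldots)$. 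Collapsing the two faces $\{\tau=0\}$ and $\{\tau=1\}$, on which $(\Gamma,\Gamma')$ is constant, factors $(\Gamma,\Gamma')$ through $f\times 1_{D^2}$, and the boundary of this disc is the loop $\overline{(\Theta_m,\Theta_m')}\ast(\varphi_{m,m+1},\varphi_{m,m+1}')\ast\big((q_{m,m+1},q_{m,m+1}')\circ(\Theta_{m+1},\Theta_{m+1}')\big)\ast\overline{(\Psi_{m,m+1},\Psi_{m,m+1}')\circ(p_{\Psi(m+1)}\times 1_I,\ldots)}$; checking with \eqref{GrindEQ__4_1_}--\eqref{GrindEQ__4_2_} for $\boldsymbol{(\varphi,\varphi')}$ and \eqref{GrindEQ__4_3_}--\eqref{GrindEQ__4_4_} at indices $m$ and $m+1$, this is precisely the boundary loop of the target square with the four prescribed edges, consistent at all four corners. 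Composing the factored map with a homeomorphism $I^2\cong D^2$ that carries the edges $I\times\{0\},\{0\}\times I,I\times\{1\},\{1\}\times I$ of $I^2$ onto the boundary subarcs of $D^2$ carrying, respectively, $\overline{(\Theta_m,\Theta_m')}$, $(\varphi_{m,m+1},\varphi_{m,m+1}')$, $(q_{m,m+1},q_{m,m+1}')\circ(\Theta_{m+1},\Theta_{m+1}')$, and $(\Psi_{m,m+1},\Psi_{m,m+1}')\circ(p_{\Psi(m+1)}\times 1_I,\ldots)$, we obtain $(\Theta_{m,m+1},\Theta_{m,m+1}')\colon f\times 1_{I^2}\to g_m$ satisfying \eqref{GrindEQ__4_5_}--\eqref{GrindEQ__4_8_}. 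As stage $m+1$ never alters the data of earlier stages, the induction yields a coherent morphism $\boldsymbol{(\Psi,\Psi')}\colon\mathbf f\to\mathbf g$ and a coherent homotopy $\boldsymbol{(\Theta,\Theta')}\colon\boldsymbol{(\varphi,\varphi')}\cong\boldsymbol{(\Psi,\Psi')}\circ\mathbf{(p,p')}$, as required.

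The delicate point is exactly this last step: verifying that the loop which $(\Gamma,\Gamma')$ fills (after collapsing its two constant faces) coincides, up to reparametrization, with the loop formed by the four prescribed edges of the target square, and that all four corner values agree, so that $(\Theta_{m,m+1},\Theta_{m,m+1}')$ is well defined and satisfies \eqref{GrindEQ__4_5_}--\eqref{GrindEQ__4_8_} literally; the threading of the base-index conditions \eqref{GrindEQ__4_1_}--\eqref{GrindEQ__4_8_} and of the primed halves through each use of SF1) and SF2) is routine. If the non-truncated notion of coherent morphism is intended, the higher homotopy components over $I^n$ are built by the same scheme, using the corresponding results of Section~2 (Theorem~2.1 and Theorem~2.5) in place of SF1), SF2) and Lemma~2.4 to extend along the fiber cofibrations $f\times 1_{\partial I^n}\to f\times 1_{I^n}$.
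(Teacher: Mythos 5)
Your construction is correct, and its overall architecture coincides with the paper's: SF1) supplies the level morphisms $(\Psi_m,\Psi_m')$ together with the homotopies $(\Theta_m,\Theta_m')$, SF2) supplies the coherence data $(\Psi_{m,m+1},\Psi_{m,m+1}')$, and the square-reparametrization (collapsing the two faces on which $(\Gamma,\Gamma')$ is constant and composing with a homeomorphism of $I^2$ onto the resulting disc) produces $(\Theta_{m,m+1},\Theta_{m,m+1}')$ — this last step is exactly the paper's quotient-space-$Z$-plus-homeomorphism-$k$ device. The one genuine divergence is in how the input to SF2) is prepared. The paper first uses the fiber cofibration property of $\left(1_X\times i_{\partial I},\,1_{X'}\times i_{\partial I}\right)$ (Lemma 3.3) to extend the boundary deformation given by $(\Theta_m,\Theta_m')$ and $(q_{m,m+1},q_{m,m+1}')\circ(\Theta_{m+1},\Theta_{m+1}')$ over all of $f\times 1_I$, obtaining a homotopy $(\tilde\varphi_{m,m+1},\tilde\varphi_{m,m+1}')$ whose restriction to $X\times\partial I$ factors through the expansion, and then invokes the reformulated, rel-$\partial I$ version of SF2). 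You instead concatenate $\overline{(\Theta_m,\Theta_m')}\ast(\varphi_{m,m+1},\varphi_{m,m+1}')\ast\bigl((q_{m,m+1},q_{m,m+1}')\circ(\Theta_{m+1},\Theta_{m+1}')\bigr)$ and feed the result directly into the plain form of SF2); the two recipes produce homotopies that agree up to homotopy rel $\partial I$, so both are legitimate, and yours dispenses with the cofibration lemma at this point and is somewhat more economical. A second, minor difference: you choose the indices $\Psi(m)$ inductively using directedness, whereas the paper applies SF1) to all $m$ at once and then re-indexes by taking maxima to force monotonicity; for inverse sequences over $\mathbb N$ these are interchangeable. Your corner-consistency check for the boundary loop, which you rightly flag as the delicate point, is the same verification the paper performs in \eqref{GrindEQ__4_46_}--\eqref{GrindEQ__4_48_}.
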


\begin{proof}
Consider any index $m\in \mathbb N$ and the corresponding morphisms $\left(\varphi _{m} ,\varphi _{m}' \right):f\to g_{m} .$ By the property SF1) of  $\mathbf{\left(p,p'\right)}:f\to \mathbf f$ there exist $\tilde{\psi }\left(m\right)\in \mathbb N$ and morphisms $\left(\tilde{\psi }_{m} ,\tilde{\psi }_{m}' \right):f_{\tilde{\psi }\left(m\right)} \to g_{m} $ such that $\left(\varphi _{m} ,\varphi _{m}' \right)\cong \left(\tilde{\psi }_{m} ,\tilde{\psi }_{m}^{'} \right)\circ \left(p_{\tilde{\psi }\left(m\right)} ,p_{\tilde{\psi }\left(m\right)}' \right)$. Let $\left({\rm \Theta }_{m} ,{\rm \Theta }_{m}^{'} \; \right):f\times 1_{I} \to g_{m} $ be a corresponding homotopy, i.e. 
\begin{equation} \label{GrindEQ__4_9_} 
\left({\rm \Theta }_{m} \left(x,0\right),{\rm \Theta }_{m}^{'} \left(x',0\right)\right)=\left(\varphi _{m} \left(x\right),\varphi _{m}' \left(x'\right)\right), 
\end{equation} 
\begin{equation} \label{GrindEQ__4_10_} 
\left({\rm \Theta }_{m} \left(x,1\right),{\rm \Theta }_{m}^{'} \left(x',1\right)\right)=\left(\tilde{\psi }_{m} \left(p_{\tilde{\psi }\left(m\right)} \right)\left(x\right),\tilde{\psi }_{m}^{'} \left(\; p_{\tilde{\psi }\left(m\right)}' \right)\left(x'\right)\right). 
\end{equation} 
Consider the homotopies $\left({\rm \Theta }_{m} ,{\rm \Theta }_{m}^{'} \; \right),\left({\rm \Theta }_{m+1} ,{\rm \Theta }_{m+1}^{'} \; \right),\left(\varphi _{m,m+1} ,\varphi _{m,m+1}' \right):f\times 1_{I} \to g_{m} $. Let $\left({\rm \Gamma }_{m,m+1} ,{\rm \Gamma }_{m,m+1}^{'} \right):f\times 1_{\partial I} \times 1_{I} \to g_{m} $ be a morphism defined by
\begin{equation} \label{GrindEQ__4_11_} 
\left({\rm \Gamma }_{m,m+1}(x,s,t) ,{\rm \Gamma }_{m,m+1}^{'}(x',s,t) \right)=\begin{cases}
\left({\rm \Theta }_{m}(x,t) ,{\rm \Theta }_{m}^{'}(x',t)\right), \; if s=0  \\ 
\left(q_{m,m+1}{\rm \Theta }_{m+1}(x,t) ,q'_{m,m+1}{\rm \Theta }_{m+1}^{'}(x',t)\right), \; if s=1. 
\end{cases}
\end{equation} 
In this case, by \eqref{GrindEQ__4_9_}, \eqref{GrindEQ__4_10_}, \eqref{GrindEQ__4_1_} and \eqref{GrindEQ__4_2_} we have 
\begin{equation} \label{GrindEQ__4_12_} 
\left( {{\text{ }\!\!\Gamma\!\!\text{ }}_{m,m+1}}_{|X\times \partial I\times \left\{ 0 \right\}}~~,\text{ }\!\!\Gamma\!\!\text{ }{{_{m,m+1}^{'}}_{|{X}'\times \partial I\times \left\{ 0 \right\}}} \right)=\left( {{\varphi }_{m,m+1}}_{|X\times \partial I\times \left\{ 0 \right\}},\varphi {{_{m,m+1}'}_{|{X}'\times \partial I\times \left\{ 0 \right\}}} \right).           
\end{equation} 
On the other hand, $\left(1_{X} \times i_{\partial I} ,1_{X'} \times i_{\partial I} \right):f\times 1_{\partial I} \to f\times 1_{I} $ is a fiber cofibration and so there exists a fiber homotopy $\left(\tilde{{\rm \Gamma }}_{m,m+1} ,\tilde{{\rm \Gamma }}_{m,m+1}^{'} \right):f\times 1_{I} \times 1_{I} \to g_{m} $ such that 
\begin{equation} \label{GrindEQ__4_13_} 
\left(\tilde{{\rm \Gamma }}_{m,m+1 {|X\times \partial I\times \left\{ 0 \right\}}} ,\tilde{{\rm \Gamma }}_{m,m+1 {|X'\times \partial I\times \left\{ 0 \right\}}}^{'} \right)=\left( {{\varphi }_{m,m+1}},\varphi _{m,m+1}' \right)
\end{equation} 
\begin{equation} \label{GrindEQ__4_14_} 
\left({\rm \Gamma }_{m,m+1} ,{\rm \Gamma }_{m,m+1}^{'} \; \right)=\left(\tilde{{\rm \Gamma }}_{m,m+1} ,\tilde{{\rm \Gamma }}_{m,m+1}^{'} \; \right)\circ \left(1_{X} \times i_{\partial I} ,1_{X'} \times i_{\partial I} \right).                        
\end{equation} 
Note that by \eqref{GrindEQ__4_14_}, \eqref{GrindEQ__4_11_} and \eqref{GrindEQ__4_10_}, 
\[\left(\tilde{{\rm \Gamma }}_{m,m+1} \left(x,0,1\right),\tilde{{\rm \Gamma }}_{m,m+1}^{'} \left(x',0,1\right)\right)=\left({\rm \Gamma }_{m,m+1} \left(x,0,1\right),{\rm \Gamma }_{m,m+1}^{'} \; \left(x',0,1\right)\right)=\] 
\begin{equation} \label{GrindEQ__4_15_} 
\left({\rm \Theta }_{m} \left(x,1\right),{\rm \Theta }_{m}^{'} \; \left(x',1\right)\right)=\left(\tilde{\psi }_{m} \left(p_{\tilde{\psi }\left(m\right)} \right)\left(x\right),\tilde{\psi }_{m}^{'} \left(\; p_{\tilde{\psi }\left(m\right)}' \right)\left(x'\right)\right), 
\end{equation} 
\[\left(\tilde{{\rm \Gamma }}_{m,m+1} \left(x,1,1\right),\tilde{{\rm \Gamma }}_{m,m+1}^{'} \left(x',1,1\right)\right)=\left({\rm \Gamma }_{m,m+1} \left(x,1,1\right),{\rm \Gamma }_{m,m+1}^{'} \; \left(x',1,1\right)\right)=\] 
\[\left(q_{m,m+1} {\rm \Theta }_{m+1} \left(x,1\right),q_{m,m+1}' {\rm \Theta }_{m+1}^{'} \; \left(x',1\right)\right)=\] 
\begin{equation} \label{GrindEQ__4_16_} 
\left(\left(q_{m,m+1} \circ \tilde{\psi }_{m+1} \circ p_{\tilde{\psi }\left(m\right)} \right)\left(x\right),\left(q_{m,m+1}' \circ \tilde{\psi }_{m+1}^{'} \circ p_{\tilde{\psi }\left(m\right)}' \right)\left(x'\right)\right). 
\end{equation} 
Let $\left(\tilde{\varphi }_{m,m+1} ,\tilde{\varphi }_{m,m+1}' \right):f\times 1_{I} \to g_{m} $ be a morphism given by 
\begin{equation} \label{GrindEQ__4_17_} 
\left(\tilde{\varphi }_{m,m+1} \left(x,t\right),\tilde{\varphi }_{m,m+1}' \left(x',t\right)\; \right)=\left(\tilde{{\rm \Gamma }}_{m,m+1} \left(x,t,1\right),\tilde{{\rm \Gamma }}_{m,m+1}^{'} \left(x',t,1\right)\right).\;  
\end{equation} 
Consider $n=\max \left(\tilde{\psi }\left(m\right),\tilde{\psi }\left(m+1\right)\right)$ and define a morphism $\left(\sigma _{n} ,\sigma _{n}' \right):f_{n} \times 1_{I} \to g_{m} $ by the following formula
\[\left(\sigma _{n} \left(x,s,t\right),\sigma _{n}' \left(x',s,t\right)\right)=\]
\begin{equation} \label{GrindEQ__4_18_} 
=\begin{cases}
\left(\left(\tilde{\psi}_m \circ p_{\tilde{\psi}_m(m)}(x)\right), \left(\tilde{\psi'}_m \circ p'_{\tilde{\psi'}_m(m)} (x')\right) \right), \; \; \; if \; t=0\\
\left(\left(q_{m,m+1} \circ \tilde{\psi}_{m+1} \circ p_{\tilde{\psi}_m(m)}(x)\right), \left(q'_{m,m+1} \circ \tilde{\psi'}_{m+1} \circ p'_{\tilde{\psi}^{'}_m(m)} (x')\right) \right), \; \; \; if \; t=1.
\end{cases}
\end{equation} 
In this case, by \eqref{GrindEQ__4_17_}, \eqref{GrindEQ__4_18_}, \eqref{GrindEQ__4_15_} and \eqref{GrindEQ__4_16_}, it is clear that 
\[\left( {\tilde{\varphi}}_{m,m+1|X\times \partial I},{\tilde{\varphi}'}_{m,m+1|X'\times \partial I}  \right)=\]
\[\left( {{{\tilde{\varphi }}}_{m,m+1}},\tilde{\varphi }_{m,m+1}' \right)\circ \left( {{1}_{X}}\times {{i}_{\partial I}}{{,1}_{{{X}^{'}}}}\times {{i}_{\partial I}} \right)=\]

\begin{equation} \label{GrindEQ__4_19_} 
\left( {{\sigma }_{n}},\sigma _{n}' \right)\cdot \left( {{p}_{n}}\times {{i}_{\partial I}},p_{n}'\times {{i}_{\partial I}} \right).
\end{equation} 
Therefore, by the property SF2) of $\mathbf{\left(p,p'\right)}:f\to \mathbf f$, there exist $n'\in \mathbb N$, which will be denoted by $\tilde{\psi }\left(m,m+1\right)$, and $\left(\tilde{\psi }_{m,m+1} ,\tilde{\psi }_{m,m+1}^{'} \right):f_{\tilde{\psi }\left(m,m+1\right){\rm \; }} \times 1_{I} \to g_{m} $ such that
\[\left(\tilde{\psi }_{m,m+1} ,\tilde{\psi }_{m,m+1}^{'} \right)\circ \left(1_{X_{\tilde{\psi }\left(m,m+1\right)} } \times i_{\partial I} ,1_{X_{\tilde{\psi }\left(m,m+1\right)}^{'} } \times i_{\partial I} \right)=\] 
\begin{equation} \label{GrindEQ__4_20_} 
\left(\sigma _{n} ,\sigma _{n}' \right)\circ \left(p_{n,\tilde{\psi }\left(m,m+1\right)} \times i_{\partial I} ,p_{n,\tilde{\psi }\left(m,m+1\right)}' \times i_{\partial I} \right), 
\end{equation} 
\begin{equation} \label{GrindEQ__4_21_} 
\left(\tilde{\varphi }_{m,m+1} ,\tilde{\varphi }_{m,m+1}^{'} \right)\cong \left(\tilde{\psi }_{m,m+1} ,\tilde{\psi }_{m,m+1}^{'} \right)\cdot \left(p_{\tilde{\psi }\left(m,m+1\right)} ,p_{\tilde{\psi }\left(m,m+1\right)}' \right)\left(rel\left\{f\times 1_{\partial I} \right\}\right). 
\end{equation} 

Let $\psi \left(m,m+1\right)=\mathop{\max }\limits_{n\le m} \left\{\tilde{\psi }\left(n,n+1\right)\right\}$, then $\psi : \mathbb N\to \mathbb N$ will be an increasing function. Define the morphisms $\left(\psi _{m} ,\psi _{m}^{'} \right):f_{\psi \left(m\right)} \times 1_{I} \to g_{m} $ and $\left(\psi _{m,m+1} ,\psi _{m,m+1}^{'} \right):f_{\psi \left(m+1\right)} \times 1_{I} \to g_{m} $ by the following way
\begin{equation} \label{GrindEQ__4_22_} 
\left(\psi _{m} ,\psi _{m}^{'} \right)=\left(\tilde{\psi }_{m} ,\tilde{\psi }_{m}^{'} \right)\circ \left(p_{\tilde{\psi }\left(m\right),\psi \left(m\right)} ,p_{\tilde{\psi }\left(m\right),\psi \left(m\right)}' \right),                           
\end{equation} 
\begin{equation} \label{GrindEQ__4_23_} 
\left(\psi _{m,m+1} ,\psi _{m,m+1}^{'} \right)=\left(\tilde{\psi }_{m,m+1} ,\tilde{\psi }_{m,m+1}^{'} \right)\circ \left(p_{\tilde{\psi }\left(m,m+1\right),\psi \left(m+1\right)} \times 1_{I} ,p_{\tilde{\psi }\left(m,m+1\right),\psi \left(m+1\right)}' \times 1_{I} \right). 
\end{equation} 
By \eqref{GrindEQ__4_22_}, \eqref{GrindEQ__4_20_}, \eqref{GrindEQ__4_18_} and \eqref{GrindEQ__4_22_},
\[\left(\psi _{m,m+1} \left(x,0\right),\psi _{m,m+1}^{'} \left(x',0\right)\right)=\] 
\[\left(\left(\tilde{\psi }_{m,m+1} \circ \left(p_{\tilde{\psi }\left(m,m+1\right),\psi \left(m+1\right)} \times 1_{I} \right)\right)\left(x,0\right),\left(\tilde{\psi }_{m,m+1}^{'} \circ \left(p_{\tilde{\psi }\left(m,m+1\right),\psi \left(m+1\right)}' \times 1_{I} \right)\right)\left(x',0\right)\right)=\] 
{\tiny\[\left( \left(\sigma _{n} \circ \left(p_{n,\tilde{\psi }\left(m,m+1\right)} \times 1_{I} \right)\circ \left(p_{\tilde{\psi }\left(m,m+1\right), \psi \left(m+1\right)} \times 1_{I} \right)\right)(x,0),\left(\sigma _{n}' \circ \left(p_{n,\tilde{\psi }\left(m,m+1\right)}' \times 1_{I} \right)\circ \left(p_{\tilde{\psi }\left(m,m+1\right),\psi \left(m+1\right)}' \times 1_{I} \right)(x',0)\right)\right)=\]}
\[\left(\sigma _{n} \left(p_{n,\psi \left(m+1\right)} \left(x\right),0\right),\sigma _{n}' \left(p_{n,\psi \left(m+1\right)}' \left(x'\right),0\right)\; \right)=\] 
\[\left(\left(\tilde{\psi }_{m} \circ p_{\tilde{\psi }\left(m\right),n} \right)\left(p_{n,\psi \left(m+1\right)} \left(x\right)\right),\left(\tilde{\psi }_{m}^{'} \circ p_{\tilde{\psi }\left(m\right),n}^{'} \right)\left(p_{n,\psi \left(m+1\right)}' \left(x'\right)\right)\; \right)=\] 
\[\left(\left(\tilde{\psi }_{m} \circ p_{\tilde{\psi }\left(m\right),\psi \left(m+1\right)} \right)\left(x\right),\left(\tilde{\psi }_{m}^{'} \circ p_{\tilde{\psi }\left(m\right),\psi \left(m+1\right)}' \right)\left(x'\right)\; \right)=\] 
\[\left(\left(\tilde{\psi }_{m} \circ p_{\tilde{\psi }\left(m\right),\psi \left(m\right)} \right)\left(p_{\psi \left(m\right),\psi \left(m+1\right)} \left(x\right)\right),\left(\tilde{\psi }_{m}' \circ p_{\tilde{\psi }\left(m\right),\psi \left(m\right)}' \right)\left(p_{\psi \left(m\right),\psi \left(m+1\right)}' \left(x'\right)\right)\right)=\] 
\begin{equation} \label{GrindEQ__4_24_} 
\left(\psi _{m} \left(p_{\psi \left(m\right),\psi \left(m+1\right)} \left(x\right)\right),\psi _{m}^{'} \left(p_{\psi \left(m\right),\psi \left(m+1\right)}' \left(x'\right)\right)\; \right). 
\end{equation} 

\[\left(\psi _{m,m+1} \left(x,1\right),\psi _{m,m+1}^{'} \left(x',1\right)\right)=\] 
\[\left(\left(\tilde{\psi }_{m,m+1} \circ \left(p_{\tilde{\psi }\left(m,m+1\right),\psi \left(m+1\right)} \times 1_{I} \right)\right)\left(x,1\right),\left(\tilde{\psi }_{m,m+1}^{'} \circ \left(p_{\tilde{\psi }\left(m,m+1\right),\psi \left(m+1\right)}' \times 1_{I} \right)\right)\left(x',1\right)\right)=\] 
{\tiny \[\left(\left(\sigma _{n} \circ \left(p_{n,\tilde{\psi }\left(m,m+1\right)} \times 1_{I} \right)\circ \left(p_{\tilde{\psi }\left(m,m+1\right),\psi \left(m+1\right)} \times 1_{I} \right)\right)\left(x,1\right),\left(\sigma _{n}' \circ \left(p_{n,\tilde{\psi }\left(m,m+1\right)}' \times 1_{I} \right)\circ \left(p_{\tilde{\psi }\left(m,m+1\right),\psi \left(m+1\right)}' \times 1_{I} \right)\right)\left(x',1\right)\right)=\] }
\[\left(\sigma _{n} \left(p_{n,\psi \left(m+1\right)} \left(x\right),1\right),\sigma _{n}' \left(p_{n,\psi \left(m+1\right)}' \left(x'\right),1\right)\; \right)=\] 
\[\left(\left(q_{m,m+1} \circ \tilde{\psi }_{m+1} \circ p_{\tilde{\psi }\left(m+1\right),n} \right)\left(p_{n,\psi \left(m+1\right)} \left(x\right)\right),\left(q_{m,m+1}' \circ \tilde{\psi }_{m+1}^{'} \circ p_{\tilde{\psi }\left(m+1\right),n}' \right)\left(p_{n,\psi \left(m+1\right)}' \left(x'\right)\right)\; \right)=\] 
\[\left(\left(q_{m,m+1} \circ \tilde{\psi }_{m+1} \circ p_{\tilde{\psi }\left(m\right),\psi \left(m+1\right)} \right)\left(x,0\right),\left(q_{m,m+1}' \circ \tilde{\psi }_{m+1}^{'} \circ p_{\tilde{\psi }\left(m\right),\psi \left(m+1\right)}' \right)\left(x',0\right)\; \right)=\] 
\begin{equation} \label{GrindEQ__4_25_} 
\left(\left(q_{m,m+1} \circ \psi _{m+1} \right)\left(x,0\right),\left(q_{m,m+1}' \circ \psi _{m+1}^{'} \right)\left(x',0\right)\; \right). 
\end{equation} 
Therefore, we construct the system $\boldsymbol{\left(\Psi ,\Psi '\right)}=\left\{\left(\psi _{m} ,\psi _{m}^{'} \right),\left(\psi _{m,m+1,} \psi _{m,m+1,}^{'} \right)\right\}:\mathbf{f}\to \mathbf{g}$\textbf{ }which satisfies the following conditions:
\begin{equation} \label{GrindEQ__4_26_} 
\left(\psi _{m,m+1} \left(x,0\right),\psi _{m,m+1}' \left(x',0\right)\right)=\left(\left(\psi _{m} \circ p_{\psi \left(m\right),\psi \left(m+1\right)} \right)\left(x\right),\left(\psi _{m}' \cdot p_{\psi \left(m\right),\psi \left(m+1\right)}' \right)\left(x'\right)\; \right), 
\end{equation} 
\begin{equation} \label{GrindEQ__4_27_} 
\left(\psi _{m,m+1} \left(x,1\right),\psi _{m,m+1}' \left(x',1\right)\right)=\left(\left(q_{m,m+1} \circ \psi _{m+1} \right)\left(x\right),\left(q_{m,m+1}' \cdot \psi _{m+1}' \right)\left(x'\right)\; \right). 
\end{equation} 
So $\boldsymbol{\left(\Psi ,\Psi '\right)}$ is a coherent morphism.

Let show that the morphism $\boldsymbol{\left(\varphi ,\varphi '\right)}$ is a coherent homotopic to $\boldsymbol{\left(\psi ,\psi '\right)} \circ \mathbf{\left(p,p'\right)}.$ For this we should construct a system
\[\boldsymbol{\left(\Theta ,\Theta '\right)}=\left\{\left({\rm \Theta }_{m} ,{\rm \Theta }_{m}^{'} \right),\left({\rm \Theta }_{m,m+1} ,{\rm \Theta }_{m,m+1}^{'} \right),{\rm \Theta }\right\} :\mathbf{f}\times 1_{I} \to \mathbf{g}\] 
such that\textbf{ }
\begin{equation} \label{GrindEQ__4_28_} 
\left({\rm \Theta }_{m} \left(x,0\right),{\rm \Theta }_{m}^{'} \left(x',0\right)\right)=\left(\varphi _{m} \left(x\right),\varphi _{m}' \left(x'\right)\right),                                           
\end{equation} 
\begin{equation} \label{GrindEQ__4_29_} 
\left({\rm \Theta }_{m} \left(x,1\right),{\rm \Theta }_{m}^{'} \left(x',1\right)\right)=\left(\psi _{m} \left(p_{\psi \left(m\right)} \left(x\right)\right),\psi _{m}' \left(p_{\psi \left(m\right)}' \left(x'\right)\right)\right),                         
\end{equation} 
\begin{equation} \label{GrindEQ__4_30_} 
\left({\rm \Theta }_{m,m+1} \left(x,s,0\right),{\rm \Theta }_{m,m+1}^{'} \left(x,s,0\right)\right)=\left({\rm \Theta }_{m} \left(x,s\right),{\rm \Theta }_{m}^{'} \left(x',s\right)\right),                          
\end{equation} 
\[\left({\rm \Theta }_{m,m+1} \left(x,s,1\right),{\rm \Theta }_{m,m+1}^{'} \left(x,s,1\right)\right)=\]
\begin{equation} \label{GrindEQ__4_31_} 
\left(q_{m,m+1} \left({\rm \Theta }_{m+1} \left(x\right),s\right),q_{m,m+1}' \left({\rm \Theta }_{m+1}^{'} \left(x'\right),s\right)\right),      
\end{equation} 
\begin{equation} \label{GrindEQ__4_32_} 
\left({\rm \Theta }_{m,m+1} \left(x,0,t\right),{\rm \Theta }_{m,m+1}^{'} \left(x,0,t\right)\right)=\left(\varphi _{m,m+1} \left(x,t\right),\varphi _{m,m+1}' \left(x',t\right)\right),                
\end{equation} 
\[\left({\rm \Theta }_{m,m+1} \left(x,1,t\right),{\rm \Theta }_{m,m+1}^{'} \left(x,1,t\right)\right)=\]
\begin{equation} \label{GrindEQ__4_33_} 
\left(\psi _{m,m+1} \left(p_{\psi \left(m+1\right)} \left(x\right),t\right),\psi _{m,m+1}' \left(p_{\psi \left(m+1\right)}' \left(x'\right),t\right)\right).     
\end{equation} 
Note that for each $n\in \mathbb N$ we have defined a morphism $\left({\rm \Theta }_{m} ,{\rm \Theta }_{m}^{'} \; \right):f\times 1_{I} \to g_{m} $ and by \eqref{GrindEQ__4_9_}, \eqref{GrindEQ__4_10_} and \eqref{GrindEQ__4_22_},
\begin{equation} \label{GrindEQ__4_34_} 
\left({\rm \Theta }_{m} \left(x,0\right),{\rm \Theta }_{m}^{'} \left(x',0\right)\right)=\left(\varphi _{m} \left(x\right),\varphi _{m}' \left(x'\right)\right),                                           
\end{equation} 
\[\left({\rm \Theta }_{m} \left(x,1\right),{\rm \Theta }_{m}^{'} \left(x',1\right)\right)=\left(\left(\tilde{\psi }_{m} \cdot p_{\tilde{\psi }\left(m\right)} \right)\left(x\right),\left(\tilde{\psi }_{m}'\cdot p_{\tilde{\psi }\left(m\right)}' \right)\left(x'\right)\right)=\] 
\[\left(\left(\tilde{\psi }_{m} \cdot p_{\tilde{\psi }\left(m\right),\psi \left(m\right)} \right)\left(p_{\psi \left(m\right)} \left(x\right)\right),\left(\tilde{\psi }_{m}' \cdot p_{\tilde{\psi }\left(m\right),\psi \left(m\right)}' \right)\left(p_{\psi \left(m\right)}' \left(x'\right)\right)\right)=\] 
\begin{equation} \label{GrindEQ__4_35_} 
\left(\left(\psi _{m} \cdot p_{\psi \left(m\right)} \right)\left(x\right),\left(\psi _{m}' \cdot p_{\psi \left(m\right)}' \right)\left(x'\right)\right).\;  
\end{equation} 
So it remains to construct a morphism $\left({\rm \Theta }_{m+1} ,{\rm \Theta }_{m+1}^{'} \; \right):f\times 1_{I} \times 1_{I} \to g_{m} $. Consider a homotopy $\left({\rm \Delta }_{m+1} ,{\rm \Delta }_{m+1}^{'} \; \right):f\times 1_{I} \times 1_{I} \to g_{m} $ which realizes \eqref{GrindEQ__4_21_}, i.e
\begin{equation} \label{GrindEQ__4_36_} 
\left({\rm \Delta }_{m} \left(x,s,0\right),{\rm \Delta }_{m}^{'} \left(x',s,0\right)\right)=\left(\tilde{\varphi }_{m,m+1} \left(x,s\right),\tilde{\varphi }_{m,m+1}' \left(x',s\right)\right),                        
\end{equation} 
\[\left({\rm \Delta }_{m} \left(x,s,1\right),{\rm \Delta }_{m}^{'} \left(x',s,1\right)\right)=\]
\begin{equation} \label{GrindEQ__4_37_} 
\left(\tilde{\psi }_{m,m+1} \left(p_{\tilde{\psi }\left(m,m+1\right)} \left(x\right),s\right),\tilde{\psi }_{m,m+1}' \left(p_{\tilde{\psi }\left(m,m+1\right)}' \left(x'\right),s\right)\right), 
\end{equation} 
\begin{equation} \label{GrindEQ__4_38_} 
\left({\rm \Delta }_{m} \left(x,s,t\right),{\rm \Delta }_{m}^{'} \left(x',s,t\right)\right)=\left({\rm \Delta }_{m} \left(x,s,0\right),{\rm \Delta }_{m}^{'} \left(x',s,0\right)\right),\; \; \; \; \; \; \; \forall \; \; s\in \partial I. 
\end{equation} 
Let $\left(\tilde{{\rm \Theta }}_{m+1} ,\tilde{{\rm \Theta }}_{m+1}^{'} \; \right):f\times 1_{I} \times 1_{I} \to g_{m} $ is given by the formula:
\[\left(\tilde{{\rm \Theta }}_{m+1} \left(x,s,t\right),\tilde{{\rm \Theta }}_{m+1}^{'} \left(x',s,t\right)\right)=\]
\begin{equation} \label{GrindEQ__4_39_} 
=\begin{cases}
{\left(\tilde{{\rm \Gamma }}_{m,m+1} \left(x,t,2s\right),\tilde{{\rm \Gamma }}'_{m,m+1} \left(x',t,2s\right)\right),\; \; \; \; \; \; \; \; \; \; \; \; \; \; \; \; \; if\; 0\le 2s\le \frac{1}{2} \; } \\ {\left({\rm \Delta }_{m,m+1} \left(x,t,2s-1\right),{\rm \Delta }_{m,m+1} \left(x',t,2s-1\right)\right),\; \; \; if\; \; \; \frac{1}{2} \le 2s\le 1.} 
\end{cases} 
\end{equation} 
By \eqref{GrindEQ__4_17_} and \eqref{GrindEQ__4_36_} the morphism  $\left(\tilde{{\rm \Theta }}_{m+1} ,\tilde{{\rm \Theta }}_{m+1}^{'} \; \right)\; $ is well defined. On the other hand, by \eqref{GrindEQ__2_38_} for each $x\in X$ and $x'\in X'$ the homotopies $\tilde{{\rm \Theta }}_{m+1} $ and $\tilde{{\rm \Theta }}_{m+1}^{'} $ are constant on the subspaces $\left\{x\right\}\times \left\{0\right\}\times \left[\frac{1}{2} ;1\right]$, $\left\{x\right\}\times \left\{1\right\}\times \left[\frac{1}{2} ;1\right]$ and  $\left\{x'\right\}\times \left\{0\right\}\times \left[\frac{1}{2} ;1\right]$, $\left\{x\right\}\times \left\{0\right\}\times \left[\frac{1}{2} ;1\right]$, respectively. Consider the quotient space $Z$ of $I\times I$, where $\left\{0\right\}\times \left[\frac{1}{2} ;1\right]$ and $\left\{1\right\}\times \left[\frac{1}{2} ;1\right]$ are identified with the points $\left(0,\frac{1}{2} \right)$ and $\left(1,\frac{1}{2} \right)$, respectively. In this case, there exists a morphism $\left(\bar{{\rm \Theta }}_{m,m+1} ,\bar{{\rm \Theta }}_{m,m+1}^{'} \right):f\times 1_{Z} \to g_{m} $ such that 
\begin{equation} \label{GrindEQ__4_40_} 
\left(\tilde{{\rm \Theta }}_{m,m+1} ,\tilde{{\rm \Theta }}_{m,m+1}^{'} \; \right)=\left(\bar{{\rm \Theta }}_{m,m+1} ,\bar{{\rm \Theta }}_{m,m+1}^{'} \right)\circ \left(1_{X} \times q,\; 1_{X} \times q\right),                
\end{equation} 
where $q:I\times I\to Z$ is the quotient map. Let $k:I\times I\to Z$ be a homeomorphism such that
\begin{equation} \label{GrindEQ__4_41_} 
k\left(s,0\right)=q\left(\frac{s}{2} ,0\right),                                                        
\end{equation} 
\begin{equation} \label{GrindEQ__4_42_} 
k\left(s,1\right)=q\left(\frac{s}{2} ,1\right), 
\end{equation} 
\begin{equation} \label{GrindEQ__4_43_} 
k\left(s,1\right)=\left(0,t\right),                                                         
\end{equation} 
\begin{equation} \label{GrindEQ__4_44_} 
k\left(s,1\right)=\left(0,t\right). 
\end{equation} 
Let $\left({\rm \Theta }_{m,m+1} ,{\rm \Theta }_{m,m+1}^{'} \right):f\times 1_{Z} \to g_{m} $ be a morphism given by 
\begin{equation} \label{GrindEQ__4_45_} 
\left({\rm \Theta }_{m,m+1} \left(x,s,t\right),{\rm \Theta }_{m,m+1}^{'} \left(x',s,t\right)\right)=\left(\bar{{\rm \Theta }}_{m,m+1} \left(x,k\left(s,t\right)\right),\bar{{\rm \Theta }}_{m,m+1}^{'} \left(x',k\left(s,t\right)\right)\right). 
\end{equation} 
In this case, by \eqref{GrindEQ__4_45_}, \eqref{GrindEQ__4_41_}, \eqref{GrindEQ__4_42_}, \eqref{GrindEQ__4_40_}, \eqref{GrindEQ__4_39_} and \eqref{GrindEQ__4_11_},
\[\left({\rm \Theta }_{m,m+1} \left(x,s,0\right),{\rm \Theta }_{m,m+1}^{'} \left(x',s,0\right)\right)=\left(\bar{{\rm \Theta }}_{m,m+1} \left(x,k\left(s,0\right)\right),\bar{{\rm \Theta }}_{m,m+1}^{'} \left(x',k\left(s,0\right)\right)\right)=\] 
\[\left(\bar{{\rm \Theta }}_{m,m+1} \left(x,q\left(\frac{s}{2} ,0\right)\right),\bar{{\rm \Theta }}_{m,m+1}^{'} \left(x',q\left(\frac{s}{2} ,0\right)\right)\right)=\left(\tilde{{\rm \Theta }}_{m,m+1} \left(x,\frac{s}{2} ,0\right),\tilde{{\rm \Theta }}_{m,m+1}^{'} \left(x',\frac{s}{2} ,0\right)\; \right)=\] 
\begin{equation} \label{GrindEQ__4_46_} 
\left(\tilde{{\rm \Gamma }}_{m,m+1} \left(x,0,s\right),\tilde{{\rm \Gamma }}'_{m,m+1} \left(x',0,s\right)\right)=\left({\rm \Theta }_{m} \left(x,s\right),{\rm \Theta }_{m}^{'} \left(x',s\right)\right), 
\end{equation} 

\[\left({\rm \Theta }_{m,m+1} \left(x,s,1\right),{\rm \Theta }_{m,m+1}^{'} \left(x',s,1\right)\right)=\left(\bar{{\rm \Theta }}_{m,m+1} \left(x,k\left(s,1\right)\right),\bar{{\rm \Theta }}_{m,m+1}^{'} \left(x',k\left(s,1\right)\right)\right)=\] 
\[\left(\bar{{\rm \Theta }}_{m,m+1} \left(x,q\left(\frac{s}{2} ,1\right)\right),\bar{{\rm \Theta }}_{m,m+1}^{'} \left(x',q\left(\frac{s}{2} ,1\right)\right)\right)=\left(\tilde{{\rm \Theta }}_{m,m+1} \left(x,\frac{s}{2} ,1\right),\tilde{{\rm \Theta }}'_{m,m+1} \left(x',\frac{s}{2} ,1\right)\; \right)=\] 
\begin{equation} \label{GrindEQ__4_47_} 
\left(\tilde{{\rm \Gamma }}_{m,m+1} \left(x,1,s\right),\tilde{{\rm \Gamma }}'_{m,m+1} \left(x',1,s\right)\right)=\left(\left(q_{m,m_{1} } \cdot {\rm \Theta }_{m+1} \right)\left(x,s\right),\left(q_{m,m_{1} } \cdot {\rm \Theta '}_{m+1} \right)\left(x',s\right)\right).
\end{equation}

\noindent On the other hand, by \eqref{GrindEQ__4_45_}, \eqref{GrindEQ__4_43_}, \eqref{GrindEQ__4_44_}, \eqref{GrindEQ__4_40_}, \eqref{GrindEQ__4_12_} and \eqref{GrindEQ__4_37_},

\noindent 
\[\left({\rm \Theta }_{m,m+1} \left(x,0,t\right),{\rm \Theta }_{m,m+1}^{'} \left(x',0,t\right)\right)=\left(\bar{{\rm \Theta }}_{m,m+1} \left(x,k\left(0,t\right)\right),\bar{{\rm \Theta }}_{m,m+1}^{'} \left(x',k\left(0,t\right)\right)\right)=\] 
\[\left(\bar{{\rm \Theta }}_{m,m+1} \left(x,0,t\right),\bar{{\rm \Theta }}_{m,m+1}^{'} \left(x',0,t\right)\right)=\left(\tilde{{\rm \Theta }}_{m,m+1} \left(x,0,t\right),\tilde{{\rm \Theta }}'_{m,m+1} \left(x',0,t\right)\right)=\] 
\begin{equation} \label{GrindEQ__4_48_} 
\left(\tilde{{\rm \Gamma }}_{m,m+1} \left(x,t,0\right),\tilde{{\rm \Gamma }}'_{m,m+1} \left(x',t,0\right)\right)=\left(\varphi _{m,m+1} \left(x,s\right),\varphi '_{m,m+1} \left(x',s\right)\right), 
\end{equation}

\[\left({\rm \Theta }_{m,m+1} \left(x,1,t\right),{\rm \Theta }_{m,m+1}^{'} \left(x',1,t\right)\right)=\left(\bar{{\rm \Theta }}_{m,m+1} \left(x,k\left(1,t\right)\right),\bar{{\rm \Theta }}_{m,m+1}^{'} \left(x',k\left(1,t\right)\right)\right)=\] 
\[\left(\bar{{\rm \Theta }}_{m,m+1} \left(x,1,t\right),\bar{{\rm \Theta }}_{m,m+1}^{'} \left(x',1,t\right)\right)=\left(\tilde{{\rm \Theta }}_{m,m+1} \left(x,1,t\right),\tilde{{\rm \Theta }}'_{m,m+1} \left(x',1,t\right)\right)=\] 
\[\left({\rm \Delta }_{m,m+1} \left(x,t,1\right),{\rm \Delta }'_{m,m+1} \left(x',t,1\right)\right)=\left(\tilde{\psi }_{m,m+1} \left(p_{\tilde{\psi }\left(m,m+1\right)} \left(x\right),s\right),\tilde{\psi }_{m,m+1}^{'} \left(p_{\tilde{\psi }\left(m,m+1\right)}^{'} \left(x'\right),s\right)\right)=\] 
{\tiny \[ \label{GrindEQ__4_49_} 
\left(\left(\tilde{\psi }_{m,m+1} \circ \left(p_{\tilde{\psi }\left(m,m+1\right)\left( \right),\psi(m+1)} \times 1_I\right)\left(p_{\psi(m+1)}(x),s\right)\right),\left(\tilde{\psi }_{m,m+1}' \circ \left(p'_{\tilde{\psi }\left(m,m+1\right),\psi(m+1)} \times 1_I\right)\right) \left( p'_{\psi(m+1)}(x'),s\right)\right)=\]}
\begin{equation}\label{GrindEQ__4_48_}
\left(\psi_{m,m+1}\left( p_{\psi(m+1)}(x),s\right), \psi'_{m,m+1} \left( p'_{\psi(m+1)}(x'),s\right)\right).
\end{equation} 
Therefore,  $\boldsymbol{\left(\Theta ,\Theta '\right)}:\boldsymbol{\left(\varphi ,\varphi '\right)}\to \boldsymbol{\left(\psi ,\psi '\right)}\cdot \mathbf{\left(p,p'\right)}$. 
\end{proof}

\begin{theorem}
If $\left(\mathbf{p},\mathbf{p}'\right)=\left\{\left(p_{\lambda } ,p_{\lambda }^{'} \right)\; \right\}:f\to \mathbf{f}$\textbf{ }is a strong fiber $\mathbf{ANR}$-expansion of a continuous map  $f:X\to X'$ of compact metric spaces, $\boldsymbol{\left(\varphi ,\varphi '\right)}:f\to \mathbf{g}$\textbf{ }is a coherent morphism, where $\mathbf{g}\in \mathbf{ CH\left(tow-Mor_{CM} \right)}$\textbf{ }and\textbf{ }$\boldsymbol{\left(\Psi _{1} ,\Psi' _{1} \right)}, \boldsymbol{\left(\Psi _{2} ,\Psi' _{2} \right)}:\mathbf{f}\to \mathbf{g}$\textbf{ }are coherent morphisms such that $\boldsymbol{\left(\varphi ,\varphi '\right)}$ is coherent homotopic to $\boldsymbol{\left(\Psi _{1} ,\Psi' _{1} \right)} \circ \left(\mathbf{p},\mathbf{p}'\right)$ and $\boldsymbol{\left(\Psi _{2} ,\Psi' _{2} \right)} \circ \left(\mathbf{p},\mathbf{p}'\right)$, then the morphisms  $\boldsymbol{\left(\Psi _{1} ,\Psi' _{1} \right)}$ and $\boldsymbol{\left(\Psi _{2} ,\Psi' _{2} \right)}$ are coherent homotopic.
\end{theorem}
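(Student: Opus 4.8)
The plan is to imitate the construction in the proof of Theorem 3.1, but carried out one dimension higher: over $\mathbf{f}\times 1_{I}$ rather than over $\mathbf{f}$, and relative to the submap $\mathbf{f}\times 1_{\partial I}$. First, since coherent homotopy of coherent morphisms is symmetric and transitive — as is implicit in the definition of the category $\mathbf{CH\left(tow-Mor_{CM} \right)}$ — the two given coherent homotopies $\boldsymbol{\left(\varphi,\varphi'\right)}\cong\boldsymbol{\left(\Psi_{1},\Psi'_{1}\right)}\circ\left(\mathbf{p},\mathbf{p}'\right)$ and $\boldsymbol{\left(\varphi,\varphi'\right)}\cong\boldsymbol{\left(\Psi_{2},\Psi'_{2}\right)}\circ\left(\mathbf{p},\mathbf{p}'\right)$ combine into a coherent homotopy
\[
\boldsymbol{\left(K,K'\right)}:\boldsymbol{\left(\Psi_{1},\Psi'_{1}\right)}\circ\left(\mathbf{p},\mathbf{p}'\right)\cong\boldsymbol{\left(\Psi_{2},\Psi'_{2}\right)}\circ\left(\mathbf{p},\mathbf{p}'\right).
\]
It therefore suffices to prove that $\boldsymbol{\left(\Psi_{1},\Psi'_{1}\right)}\circ\left(\mathbf{p},\mathbf{p}'\right)\cong\boldsymbol{\left(\Psi_{2},\Psi'_{2}\right)}\circ\left(\mathbf{p},\mathbf{p}'\right)$ implies $\boldsymbol{\left(\Psi_{1},\Psi'_{1}\right)}\cong\boldsymbol{\left(\Psi_{2},\Psi'_{2}\right)}$.

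Next I would reinterpret the goal as a relative lifting problem. A coherent homotopy between $\boldsymbol{\left(\Psi_{1},\Psi'_{1}\right)}$ and $\boldsymbol{\left(\Psi_{2},\Psi'_{2}\right)}$ is the same thing as a coherent morphism $\mathbf{f}\times 1_{I}\to\mathbf{g}$ whose restriction over the submap $\mathbf{f}\times 1_{\partial I}$ equals $\boldsymbol{\left(\Psi_{1},\Psi'_{1}\right)}$ over $\{0\}$ and $\boldsymbol{\left(\Psi_{2},\Psi'_{2}\right)}$ over $\{1\}$; write $\boldsymbol{\left(\Psi_{1},\Psi'_{1}\right)}\sqcup\boldsymbol{\left(\Psi_{2},\Psi'_{2}\right)}$ for this prescribed coherent morphism $\mathbf{f}\times 1_{\partial I}\to\mathbf{g}$. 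Since $I$ and $\partial I$ are compact Hausdorff, Theorem 2.1 shows that $\mathbf{p}\times 1_{I}$ and $\mathbf{p}\times 1_{\partial I}$ are again strong fiber expansions, and $\boldsymbol{\left(K,K'\right)}$ is a coherent morphism $f\times 1_{I}\to\mathbf{g}$ whose restriction over $f\times 1_{\partial I}$ factors on the nose through $\mathbf{p}\times 1_{\partial I}$ and $\boldsymbol{\left(\Psi_{1},\Psi'_{1}\right)}\sqcup\boldsymbol{\left(\Psi_{2},\Psi'_{2}\right)}$. So it remains to construct a coherent morphism $\boldsymbol{\left(\Xi,\Xi'\right)}:\mathbf{f}\times 1_{I}\to\mathbf{g}$ whose restriction over $\mathbf{f}\times 1_{\partial I}$ is $\boldsymbol{\left(\Psi_{1},\Psi'_{1}\right)}\sqcup\boldsymbol{\left(\Psi_{2},\Psi'_{2}\right)}$ and for which $\boldsymbol{\left(\Xi,\Xi'\right)}\circ\left(\mathbf{p}\times 1_{I},\mathbf{p}'\times 1_{I}\right)$ is coherent homotopic to $\boldsymbol{\left(K,K'\right)}$ by a coherent homotopy fixed over $f\times 1_{\partial I}$; any such $\boldsymbol{\left(\Xi,\Xi'\right)}$ is the desired coherent homotopy $\boldsymbol{\left(\Psi_{1},\Psi'_{1}\right)}\cong\boldsymbol{\left(\Psi_{2},\Psi'_{2}\right)}$.

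This $\boldsymbol{\left(\Xi,\Xi'\right)}$ I would build by induction on $m\in\mathbb{N}$, running the argument of Theorem 3.1 almost verbatim but with $f_{\lambda}$ replaced everywhere by $f_{\lambda}\times 1_{I}$ and with the restriction over $1_{\partial I}$ kept exactly equal to the prescribed $\boldsymbol{\left(\Psi_{i},\Psi'_{i}\right)}$ at every stage. At level $m$ one uses SF1) for $\mathbf{p}\times 1_{I}$ to factor the $m$-th component of $\boldsymbol{\left(K,K'\right)}$ through some $f_{\tilde\psi(m)}\times 1_{I}$, arranging — by means of the fiber cofibration $f\times 1_{\partial I}\to f\times 1_{I}$ (Lemma 2.4 with $n=1$, together with Lemma 2.3) — that the connecting fiber homotopy is constant over $f\times 1_{\partial I}$, so that the factorisation still restricts to $\Psi_{i,m}$ there. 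The connecting homotopies $\Xi_{m,m+1}$ then live on $f\times 1_{I}\times 1_{I}$, one interval being the coherence parameter and the other the homotopy parameter inherited from $\mathbf{f}\times 1_{I}$, with boundary data over $1_{\partial I}$ prescribed; to produce them coherently one applies Theorem 2.5 (the relative extension over $\partial I^{2}$) in the role that SF2) plays in the proof of Theorem 3.1, again rigidifying the intermediate homotopies with Lemma 2.4 (for $n=1,2,3$) and with the quotient reparametrisation used at the end of the proof of Theorem 3.1, and the increasing index function $m\mapsto\psi(m)$ is assembled by a maximum exactly as there. One then checks, as in Theorem 3.1, that the assembled data is a genuine coherent morphism, that it restricts to $\boldsymbol{\left(\Psi_{1},\Psi'_{1}\right)}$ over $\{0\}$ and to $\boldsymbol{\left(\Psi_{2},\Psi'_{2}\right)}$ over $\{1\}$, and that composition with $\mathbf{p}\times 1_{I}$ recovers $\boldsymbol{\left(K,K'\right)}$ up to a coherent homotopy fixed over $f\times 1_{\partial I}$; with the reduction of the first paragraph this proves the theorem. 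I expect the main obstacle to be precisely this relative bookkeeping: unlike in Theorem 3.1 the restriction over $\mathbf{f}\times 1_{\partial I}$ must remain \emph{exactly} $\boldsymbol{\left(\Psi_{i},\Psi'_{i}\right)}$ throughout, which forces the relative statements (Theorem 2.5 over $\partial I^{2}$, and Lemma 2.4) to be used in place of their absolute counterparts, and the coherence homotopies $\Xi_{m,m+1}$ to be controlled simultaneously with the higher homotopies that they require; a minor further point, handled in the usual way, is that SF1) and SF2) are invoked here with the $g_{m}$ as targets.
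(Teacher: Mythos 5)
Your proposal is correct and follows essentially the same route as the paper's own proof: the paper likewise concatenates the two given coherent homotopies level by level (its $\bigl(\tilde{\Theta}_{m},\tilde{\Theta}'_{m}\bigr)$ and $\bigl(\tilde{\Theta}_{m,m+1},\tilde{\Theta}'_{m,m+1}\bigr)$ are exactly your $\boldsymbol{\left(K,K'\right)}$) and then lifts the result through the expansion term by term, using SF2) for the $m$-th components and Theorem 2.5 together with the fiber cofibration lemma and the quotient reparametrisation for the $(m,m+1)$ coherence data, all relative to $f\times 1_{\partial I}$ so that the boundary values remain exactly $\boldsymbol{\left(\Psi_{i},\Psi'_{i}\right)}$, with the index function assembled by maxima. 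The only cosmetic difference is that you package the level-wise concatenation as an abstract appeal to symmetry and transitivity of coherent homotopy, which the paper instead carries out explicitly by the reverse-and-paste formulas.
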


\begin{proof}
Let 
\begin{equation}\label{GrindEQ__4_48.1_}
\boldsymbol{\left(\Theta ^{1} ,{\Theta ^{'1}}\right)}=\left\{ \left(   \Theta_m ^{1} ,{\Theta_m ^{'1}}\right), \left(\Theta_{m,m+1} ^{1} ,{\Theta_{m, m+1} ^{'1}} \right), \Theta \right\},
\end{equation} 
and 
\begin{equation}\label{GrindEQ__4_48.2_}
\boldsymbol{\left(\Theta ^{2} ,{\Theta ^{'2}}\right)}=\left\{ \left(   \Theta_m ^{2} ,{\Theta_m ^{'2}}\right), \left(\Theta_{m,m+1} ^{2} ,{\Theta_{m, m+1} ^{'2}} \right), \Theta \right\},
\end{equation}
are the corresponding coherent homotopies. For each index $m\in \mathbb N$ consider the morphisms  $\left({\rm \Theta }_{m}^{1} ,{\rm \Theta }_{m}^{'1} \; \right),\left({\rm \Theta }_{m}^{2} ,{\rm \Theta }_{m}^{'2} \; \right):f\times 1_{I} \to g_{m} .$ By definition, in this case we have
\begin{equation} \label{GrindEQ__4_50_} 
\left({\rm \Theta }_{m}^{1} \left(x,0\right),{\rm \Theta }_{m}^{'1} \left(x',0\right)\right)=\left(\varphi _{m} \left(x\right),\varphi _{m}' \left(x'\right)\right), 
\end{equation} 
\begin{equation} \label{GrindEQ__4_51_} 
\left({\rm \Theta }_{m}^{1} \left(x,1\right),{\rm \Theta }_{m}^{'1} \left(x',1\right)\right)=\left(\psi _{m}^{1} \left(p_{\psi ^{1} \left(m\right)} \right)\left(x\right),\psi _{m}^{'1} \left(p_{\psi ^{1} \left(m\right)}' \right)\left(x'\right)\right), 
\end{equation} 
\begin{equation} \label{GrindEQ__4_52_} 
\left({\rm \Theta }_{m}^{2} \left(x,0\right),{\rm \Theta }_{m}^{'2} \left(x',0\right)\right)=\left(\varphi _{m} \left(x\right),\varphi _{m}' \left(x'\right)\right), 
\end{equation} 
\begin{equation} \label{GrindEQ__4_53_} 
\left({\rm \Theta }_{m}^{2} \left(x,1\right),{\rm \Theta }_{m}^{'2} \left(x',1\right)\right)=\left(\psi _{m}^{2} \left(p_{\psi ^{2} \left(m\right)} \right)\left(x\right),\psi _{m}^{2} \left(p_{\psi ^{2} \left(m\right)}' \right)\left(x'\right)\right). 
\end{equation} 
Let $n=\max \left(\psi ^{1} \left(m\right),\psi ^{2} \left(m\right)\right)$ and define $\left(\tilde{\psi }_{m}^{1} ,\tilde{\psi }_{m}^{'1} \right),\; \left(\tilde{\psi }_{m}^{2} ,\tilde{\psi }_{m}^{'2} \right):f_{n} \to g_{m} $ by
\begin{equation} \label{GrindEQ__4_54_} 
\left(\tilde{\psi }_{m}^{1} \left(x\right),\tilde{\psi }_{m}^{'1} \left(x'\right)\right)=\left(\psi _{m}^{1} \left(p_{\psi ^{1} \left(m\right),n} \right)\left(x\right),\psi _{m}^{'1} \left(p_{\psi ^{1} \left(m\right),n}' \right)\left(x'\right)\right), 
\end{equation} 
\begin{equation} \label{GrindEQ__4_55_} 
\left(\tilde{\psi }_{m}^{2} \left(x\right),\tilde{\psi }_{m}^{'2} \left(x'\right)\right)=\left(\psi _{m}^{2} \left(p_{\psi ^{2} \left(m\right),n} \right)\left(x\right),\psi _{m}^{'2} \left(p_{\psi ^{2} \left(m\right),n}' \right)\left(x'\right)\right). 
\end{equation} 
Let $\left(\tilde{\Theta }_{m} ,\tilde{\Theta }_{m}^{'} \right):f\times 1_{I} \to g_{m} $ be the fiber homotopy defined by
\begin{equation} \label{GrindEQ__4_56_} 
\left(\tilde{{\rm \Theta }}_{m} \left(x,t\right),\tilde{{\rm \Theta }}_{m}^{'} \left(x',t\right)\right)=
\begin{cases}
{\left({\rm \Theta }_{m}^{1} \left(x,1-2t\right),{\rm \Theta }_{m}^{'1} \left(x',1-2t\right)\right),\; \; \; if \; 0\le t\le \frac{1}{2} \; } \\ {\left({\rm \Theta }_{m}^{2} \left(x,2t-1\right),{\rm \Theta }_{m}^{'2} \left(x',2t-1\right)\right),\; \; \;  if \; \frac{1}{2} \le t\le 1.}
\end{cases}  
\end{equation} 
By \eqref{GrindEQ__4_50_} and \eqref{GrindEQ__4_52_} $\left(\tilde{\Theta }_{m} ,\tilde{\Theta }_{m}^{'} \right)$ is well-defined. On the other hand, by \eqref{GrindEQ__4_51_}, \eqref{GrindEQ__4_53_}, \eqref{GrindEQ__4_54_} and \eqref{GrindEQ__4_55_} it is a homotopy between the morphisms $\left(\tilde{\psi }_{m}^{1} ,\tilde{\psi }_{m}^{'1} \right)\circ \left(p_{n} ,p'_{n} \right)$ and $\left(\tilde{\psi }_{m}^{2} ,\tilde{\psi }_{m}^{'2} \right)\circ \left(p_{n} ,p_{n}^{'} \right)$. Indeed, 
\[\left(\tilde{\Theta }_{m} \left(x,0\right),\tilde{\Theta }_{m}^{'} \left(x',0\right)\right)=\left({\rm \Theta }_{m}^{1} \left(x,1\right),{\rm \Theta }_{m}^{'1} \left(x',1\right)\right)=\] 
\[\left(\psi _{m}^{1} \left(p_{\psi ^{1} \left(m\right)} \right)\left(x\right),\psi _{m}^{'1} \left(p_{\psi ^{1} \left(m\right)}^{'} \right)\left(x'\right)\right)=\] 
\[\left(\psi _{m}^{1} \left(p_{\psi ^{1} \left(m\right),n} \right)\left(p_{n} \left(x\right)\right),\psi _{m}^{'1} \left(p_{\psi ^{1} \left(m\right),n}^{'} \right)\left(p_{n}^{'} \left(x'\right)\right)\right)=\] 
\begin{equation} \label{GrindEQ__4_57_} 
\left(\tilde{\psi }_{m}^{1} \left(p_{n} \left(x\right)\right),\tilde{\psi }_{m}^{'1} \left(p_{n}^{'} \left(x'\right)\right)\right), 
\end{equation} 
\[\left(\tilde{\Theta }_{m} \left(x,1\right),\tilde{\Theta }_{m}^{'} \left(x',1\right)\right)=\left({\rm \Theta }_{m}^{2} \left(x,1\right),{\rm \Theta }_{m}^{'2} \left(x',1\right)\right)=\] 
\[\left(\psi _{m}^{2} \left(p_{\psi ^{2} \left(m\right)} \right)\left(x\right),\psi _{m}^{'2} \left(p_{\psi ^{2} \left(m\right)}^{'} \right)\left(x'\right)\right)=\] 
\[\left(\psi _{m}^{2} \left(p_{\psi ^{2} \left(m\right),n} \right)\left(p_{n} \left(x\right)\right),\psi _{m}^{'2} \left(p_{\psi ^{2} \left(m\right),n}^{'} \right)\left(p_{n}^{'} \left(x'\right)\right)\right)=\] 
\begin{equation} \label{GrindEQ__4_58_} 
\left(\tilde{\psi }_{m}^{2} \left(p_{n} \left(x\right)\right),\tilde{\psi }_{m}^{'2} \left(p_{n}^{'} \left(x'\right)\right)\right). 
\end{equation} 
Therefore, by the property SF2) of  $\mathbf{\left(p,p'\right)}:f\to \mathbf{f}$, there exist $\tilde{{\rm \Delta }}\left({\rm m}\right)\ge n$ and a morphism $\left(\tilde{{\rm \Delta }}_{m} ,\tilde{{\rm \Delta }}_{m}^{'} \right):f_{\tilde{{\rm \Delta }}\left(m\right)} \to g_{m} $ such that 
\[\left(\tilde{{\rm \Delta }}_{m} \left(x,0\right),\tilde{{\rm \Delta }}_{m}^{'} \left(x',0\right)\right)=\left(\tilde{\psi }_{m}^{1} \left(p_{n,\tilde{{\rm \Delta }}\left(m\right)} \right)\left(x\right),\tilde{\psi }_{m}^{'1} \left(p_{n,\tilde{{\rm \Delta }}\left(m\right)}' \right)\left(x'\right)\right)=\] 
\[\left(\left(\psi _{m}^{1} \circ p_{\psi ^{1} \left(m\right),n} \right)\left(p_{n,\tilde{{\rm \Delta }}\left(m\right)} \right)\left(x\right),\left(\psi _{m}^{'1} \circ p_{\psi ^{1} \left(m\right),n}' \right)\left(p_{n,\tilde{{\rm \Delta }}\left(m\right)}' \right)\left(x'\right)\right)=\] 
\begin{equation} \label{GrindEQ__4_59_} 
\left(\psi _{m}^{1} \left(p_{\psi ^{1} \left(m\right),\tilde{{\rm \Delta }}\left(m\right)} \right)\left(x\right),\psi _{m}^{'1} \left(p_{\psi ^{1} \left(m\right),\tilde{{\rm \Delta }}\left(m\right)}' \right)\left(x'\right)\right),                                 
\end{equation} 
\[\left(\tilde{{\rm \Delta }}_{m} \left(x,1\right),\tilde{{\rm \Delta }}_{m}^{'} \left(x',1\right)\right)=\left(\tilde{\psi }_{m}^{2} \left(p_{n,\tilde{{\rm \Delta }}\left(m\right)} \right)\left(x\right),\tilde{\psi }_{m}^{'2} \left(p_{n,\tilde{{\rm \Delta }}\left(m\right)}' \right)\left(x'\right)\right)=\] 
\[\left(\left(\psi _{m}^{2} \circ p_{\psi ^{2} \left(m\right),n} \right)\left(p_{n,\tilde{{\rm \Delta }}\left(m\right)} \right)\left(x\right),\left(\psi _{m}^{'2} \circ p_{\psi ^{2} \left(m\right),n}' \right)\left(p_{n,\tilde{{\rm \Delta }}\left(m\right)}' \right)\left(x'\right)\right)=\] 
\begin{equation} \label{GrindEQ__4_60_} 
\left(\psi _{m}^{2} \left(p_{\psi ^{2} \left(m\right),\tilde{{\rm \Delta }}\left(m\right)} \right)\left(x\right),\psi _{m}^{'2} \left(p_{\psi ^{2} \left(m\right),\tilde{{\rm \Delta }}\left(m\right)}' \right)\left(x'\right)\right).                                 
\end{equation} 
\begin{equation} \label{GrindEQ__4_61_} 
\left(\tilde{{\rm \Delta }}_{m} ,\tilde{{\rm \Delta }}_{m}^{'} \right)\circ \left(p_{\tilde{{\rm \Delta }}\left(m\right)} ,p_{\tilde{{\rm \Delta }}\left(m\right)}' \right)\cong \left(\tilde{\Theta }_{m} ,\tilde{\Theta }_{m}^{'} \right)\left(rel\left\{f\times 1_{\partial I} \right\}\right). 
\end{equation} 
Consider the fiber homotopies  
\begin{equation} \label{GrindEQ__4_62_} 
\left({\rm \Theta }_{m,m+1}^{1} ,{\rm \Theta }_{m,m+1}^{'1} \; \right),\left({\rm \Theta }_{m,m+1}^{2} ,{\rm \Theta }_{m,m+1}^{'2} \; \right):f\times 1_{I} \to g_{m} . 
\end{equation} 
By definition, in this case we have
\begin{equation} \label{GrindEQ__4_63_} 
\left({\rm \Theta }_{m,m+1}^{1} \left(x,s,0\right),{\rm \Theta }_{m,m+1}^{'1} \left(x,s,0\right)\right)=\left({\rm \Theta }_{m}^{1} \left(x,s\right),{\rm \Theta }_{m}^{'1} \left(x',s\right)\right),                          
\end{equation} 
\[\left({\rm \Theta }_{m,m+1}^{1} \left(x,s,1\right),{\rm \Theta }_{m,m+1}^{'1} \left(x,s,1\right)\right)=\]
\begin{equation} \label{GrindEQ__4_64_} 
\left(q_{m,m+1} \left({\rm \Theta }_{m+1}^{1} \left(x\right),s\right),q_{m,m+1}^{'} \left({\rm \Theta }_{m}^{'1} \left(x'\right),s\right)\right),      
\end{equation} 
\begin{equation} \label{GrindEQ__4_65_} 
\left({\rm \Theta }_{m,m+1}^{1} \left(x,0,t\right),{\rm \Theta }_{m,m+1}^{'1} \left(x,0,t\right)\right)=\left(\varphi _{m,m+1} \left(x,t\right),\varphi _{m,m+1}' \left(x',t\right)\right),                
\end{equation} 
\[\left({\rm \Theta }_{m,m+1}^{1} \left(x,1,t\right),{\rm \Theta }_{m,m+1}^{'1} \left(x,1,t\right)\right)=\]
\begin{equation} \label{GrindEQ__4_66_} 
\left(\psi _{m,m+1}^{1} \left(p_{\psi ^{1} \left(m+1\right)} \left(x\right),t\right),\psi _{m,m+1}^{'1} \left(p_{\psi ^{1} \left(m+1\right)}' \left(x'\right),t\right)\right),   
\end{equation} 
\begin{equation} \label{GrindEQ__4_67_} 
\left({\rm \Theta }_{m,m+1}^{2} \left(x,s,0\right),{\rm \Theta }_{m,m+1}^{'2} \left(x,s,0\right)\right)=\left({\rm \Theta }_{m}^{2} \left(x,s\right),{\rm \Theta }_{m}^{'2} \left(x',s\right)\right),                          
\end{equation} 
\[\left({\rm \Theta }_{m,m+1}^{2} \left(x,s,1\right),{\rm \Theta }_{m,m+1}^{'2} \left(x,s,1\right)\right)=\]
\begin{equation} \label{GrindEQ__4_68_} 
\left(q_{m,m+1} \left({\rm \Theta }_{m+1}^{2} \left(x\right),s\right),q_{m,m+1}' \left({\rm \Theta }_{m}^{'2} \left(x'\right),s\right)\right),      
\end{equation} 
\begin{equation} \label{GrindEQ__4_69_} 
\left({\rm \Theta }_{m,m+1}^{2} \left(x,0,t\right),{\rm \Theta }_{m,m+1}^{'2} \left(x,0,t\right)\right)=\left(\varphi _{m,m+1} \left(x,t\right),\varphi _{m,m+1}' \left(x',t\right)\right),                
\end{equation} 
\[\left({\rm \Theta }_{m,m+1}^{2} \left(x,1,t\right),{\rm \Theta }_{m,m+1}^{'2} \left(x,1,t\right)\right)=\]
\begin{equation} \label{GrindEQ__4_70_} 
\left(\psi _{m,m+1}^{2} \left(p_{\psi ^{2} \left(m+1\right)} \left(x\right),t\right),\psi _{m,m+1}^{'2} \left(p_{\psi ^{2} \left(m+1\right)}' \left(x'\right),t\right)\right).     
\end{equation} 
Let $\left(\tilde{\Theta }_{m,m+1} ,\tilde{\Theta }_{m,m+1}^{'} \right):f\times 1_{I} \times 1_{I} \to g_{m} $ be a fiber homotopy defined by
\[\left(\tilde{{\rm \Theta }}_{m,m+1} \left(x,s,t\right),\tilde{{\rm \Theta }}_{m.m+1}^{'} \left(x',s,t\right)\right)=\] 
\begin{equation} \label{GrindEQ__4_71_} 
=\begin{cases}{\left({\rm \Theta }_{m,m+1}^{1} \left(x,1-2s,t\right),{\rm \Theta }_{m,m+1}^{'1} \left(x',1-2s,t\right)\right),\; \; if\; 0\le s\le \frac{1}{2} \; } \\ {\left({\rm \Theta }_{m,m+1}^{2} \left(x,2s-1,t\right),{\rm \Theta }_{m,m+1}^{'2} \left(x',2s-1,t\right)\right),\; \; if\;  \frac{1}{2} \le s\le 1.} 
\end{cases}  
\end{equation} 
By \eqref{GrindEQ__4_65_} and \eqref{GrindEQ__4_69_}, it is well-defined. 
Let $\left({\rm \Lambda }_{m} ,{\rm \Lambda }_{m}^{'} \; \right):f\times 1_{I} \times 1_{I} \to g_{m} $ be a fiber homotopy which realizes \eqref{GrindEQ__4_61_}, i.e.
\begin{equation} \label{GrindEQ__4_72_} 
\left({\rm \Lambda }_{m} \left(x,s,0\right),{\rm \Lambda }_{m}^{'} \left(x',s,0\right)\right)=\left(\tilde{{\rm \Delta }}_{m} \left(p_{\tilde{{\rm \Delta }}\left(m\right)} \left(x\right),s\right),\tilde{{\rm \Delta }}_{m}^{'} \left(p_{\tilde{{\rm \Delta }}\left(m\right)}' \left(x'\right),s\right)\right), 
\end{equation} 
\begin{equation} \label{GrindEQ__4_73_} 
\left({\rm \Lambda }_{m} \left(x,s,1\right),{\rm \Lambda }_{m}^{'} \left(x',s,1\right)\right)=\left(\tilde{\Theta }_{m} \left(x,s\right),\tilde{\Theta }_{m}^{'} \left(x',s\right)\right), 
\end{equation} 
\begin{equation} \label{GrindEQ__4_74_} 
\left({\rm \Lambda }_{m} \left(x,s,t\right),{\rm \Lambda }_{m}^{'} \left(x',s,t\right)\right)=\left({\rm \Lambda }_{m} \left(x,s,0\right),{\rm \Lambda }_{m}^{'} \left(x',s,0\right)\right),\; if\; s\in \partial I. 
\end{equation} 
Define the fiber homotopy $\left(\tilde{{\rm \Gamma }}_{m,m+1} ,\tilde{{\rm \Gamma }}_{m,m+1}^{'} \right):f\times 1_{I} \times 1_{I} \to g_{m} $ by the formula
\[\left(\tilde{{\rm \Gamma }}_{m,m+1} \left(x,s,t\right),\tilde{{\rm \Gamma }}_{m,m+1}^{'} \left(x',s,t\right)\right)=\] 
\begin{equation} \label{GrindEQ__4_75_}
\begin{cases}
\left( \Lambda_m(x,s,3t),\Lambda'_m(x',s,3t)\right) \; \; \; \; \; \; \; \; \; \; \; \; \; \; \; \; \; \; \; \; \; \; \; \; \; \; \; \; \; \; \; \; \; \; \; \; if \; 0 \le s \le \frac{1}{3}\\
\left(\tilde{\Theta}_{,m+1}(x,s,3t-1),{\tilde{\Theta}}'_{m,m+1}(x',s,3t-1)\right) \; \; \; \; \; \; \; \; \; \; \; if \; \frac{1}{3} \le s \le \frac{2}{3}\\
\left(q_{m,m+1} \Lambda_m(x,s,3-3t),q'_{m,m+1}\Lambda'_m(x',s,3-3t)\right) \; \; \;  if \; \frac{2}{3} \le s \le 1.\\
\end{cases} 
\end{equation} 
By \eqref{GrindEQ__4_73_}, \eqref{GrindEQ__4_71_}, \eqref{GrindEQ__4_63_}, \eqref{GrindEQ__4_67_}, \eqref{GrindEQ__4_56_}, \eqref{GrindEQ__4_63_}, \eqref{GrindEQ__4_67_}  and \eqref{GrindEQ__4_74_} the  morphism  $\left(\tilde{{\rm \Gamma }}_{m,m+1} ,\tilde{{\rm \Gamma }}_{m,m+1}^{'} \right)\; $ is well defined. On the other hand, by \eqref{GrindEQ__4_74_}, for each $x\in X$ and $x'\in X'$ the homotopies $\tilde{{\rm \Gamma }}_{m,m+1} $ and $\tilde{{\rm \Gamma }}_{m,m+1}^{'} $ are constant on the subspaces $\left\{x\right\}\times \left\{0\right\}\times \left[0;\frac{1}{3} \right]$, $\left\{x\right\}\times \left\{1\right\}\times \left[0;\frac{1}{3} \right]$, $\left\{x\right\}\times \left\{0\right\}\times \left[\frac{2}{3} ;1\right]$, $\left\{x\right\}\times \left\{1\right\}\times \left[\frac{2}{3} ;1\right]$ and  $\left\{x'\right\}\times \left\{0\right\}\times \left[0;\frac{1}{3} \right]$, $\left\{x'\right\}\times \left\{1\right\}\times \left[0;\frac{1}{3} \right]$, $\left\{x'\right\}\times \left\{0\right\}\times \left[\frac{2}{3} ;1\right]$, $\left\{x'\right\}\times \left\{1\right\}\times \left[\frac{2}{3} ;1\right]$, respectively. Consider the quotient space $Z$ of $I\times I$, where $\left\{0\right\}\times \left[0;\frac{1}{3} \right]$, $\left\{1\right\}\times \left[0;\frac{1}{3} \right]$, $\left\{0\right\}\times \left[\frac{2}{3} ;1\right]$ and $\left\{1\right\}\times \left[\frac{2}{3} ;1\right]$ are identified with the points $\left(0,\frac{1}{3} \right)$, $\left(1,\frac{1}{3} \right)$, $\left(0,\frac{2}{3} \right)$ and $\left(1,\frac{2}{3} \right)$, respectively. In this case, there exists a morphism $\left(\bar{{\rm \Gamma }}_{m,m+1} ,\bar{{\rm \Gamma }}_{m,m+1}^{'} \right):f\times 1_{Z} \to g_{m} $ such that 
\begin{equation} \label{GrindEQ__4_76_} 
\left(\tilde{{\rm \Gamma }}_{m,m+1} ,\tilde{{\rm \Gamma }}_{m,m+1}^{'} \; \right)=\left(\bar{{\rm \Gamma }}_{m,m+1} ,\bar{{\rm \Gamma }}_{m,m+1}^{'} \right)\circ \left(1_{X} \times q,\; 1_{X} \times q\right),                 
\end{equation} 
where $q:I\times I\to Z$ is a quotient map. Let $k:I\times I\to Z$ be a homeomorphism such that
\begin{equation} \label{GrindEQ__4_77_} 
k\left(s,0\right)=q\left(s,0\right),                           \end{equation} 
\begin{equation} \label{GrindEQ__4_78_} 
k\left(s,1\right)=q\left(s,1\right), 
\end{equation} 
\begin{equation} \label{GrindEQ__4_79_} 
k\left(s,1\right)=q\left(0,\frac{1+t}{3} \right),                                            
\end{equation} 
\begin{equation} \label{GrindEQ__4_80_} 
k\left(s,1\right)=q\left(1,\frac{1+t}{3} \right). 
\end{equation} 
Let $\left({\rm \Gamma }_{m,m+1} ,{\rm \Gamma }_{m,m+1}^{'} \right):f\times 1_{Z} \to g_{m} $ be a morphism given by 
\begin{equation} \label{GrindEQ__4_81_} 
\left({\rm \Gamma }_{m,m+1} \left(x,s,t\right),{\rm \Gamma }_{m,m+1}^{'} \left(x',s,t\right)\right)=\left(\bar{{\rm \Gamma }}_{m,m+1} \left(x,k\left(s,t\right)\right),\bar{{\rm \Gamma }}_{m,m+1}^{'} \left(x',k\left(s,t\right)\right)\right). 
\end{equation} 
Let $k=\max \left(\tilde{{\rm \Delta }}\left(m\right),\; \tilde{{\rm \Delta }}\left(m+1\right)\psi ^{1} \left(m\right),\psi ^{2} \left(m\right)\right)$. Define 
\[\left(\sigma _{k} \left(x,s,t\right),\sigma _{k}' \left(x',s,t\right)\right)=\] 
\begin{equation} \label{GrindEQ__4_82_}
=\begin{cases}
\left(\tilde{\Delta}_m\left(p_{\tilde{\Delta}_m,l}(x),s\right),\tilde{\Delta}'_m\left(p'_{\tilde{\Delta}_m,l}(x'),s\right) \right), \; \; \; \; \; \; \; \; \; \; \; \; \; \; \; \; \; \; \; \; \;\; \; \; \; \; \; \; \; \;\; \; \; \;  if \; t=0\\
\left(q_{m,m+1}\tilde{\Delta}_{m+1}\left(p_{\tilde{\Delta}_{m+1},l}(x),s\right),q'_{m+1}\tilde{\Delta}'_{m+1}\left(p'_{\tilde{\Delta}_{m+1},l}(x'),s\right) \right), \; \; \; if \; t=1\\
\left(\psi^1_{m,m+1}\left(p_{\psi^1_{m,m+1},k}(x),s\right),\psi^{'1}_{m,m+1}\left(p'_{\psi^{'1}_{m,m+1},k}(x),s\right) \right), \; \; \; \; \; \; \; \; \;  if \; s=0\\
\left(\psi^2_{m,m+1}\left(p_{\psi^2_{m,m+1},k}(x),s\right),\psi^{'2}_{m,m+1}\left(p'_{\psi^{'2}_{m,m+1},k}(x),s\right) \right), \; \; \; \; \; \; \; \; \; if \; s=1.
\end{cases} 
\end{equation} 
In this case, we have
\begin{equation} \label{GrindEQ__4_83_} 
\left({\rm \Gamma }_{m,m+1|X \times \partial I^2} ,{\rm \Gamma }_{m,m+1 X' \times \partial I^2}^{'} \; \right)=\left(\sigma_k, \sigma'_k \right)\circ \left(p_k \times 1_{\partial I^2}, p'_k \times 1_{\partial I^2}\right). 
\end{equation} 
Therefore, by theorem 2.5 there exist $\tilde{{\rm \Delta }}\left(m,m+1\right)\ge k$ and a morphism $\left(\tilde{{\rm \Delta }}_{m,m+1} ,\tilde{{\rm \Delta }}_{m,m+1}^{'} \right):f_{\tilde{{\rm \Delta }}\left(m,m+1\right)} \times 1_{I} \times 1_{I} \to g_{m} $ such that 
\[\left({\tilde{\Delta} }_{m,m+1|X_{\tilde{\Delta}(m,m+1) } \times \partial I^2} ,{\tilde{\Delta} }'_{m,m+1|X'_{\tilde{\Delta} (m,m+1)} \times \partial I^2} \right)=\]
\begin{equation} \label{GrindEQ__4_84_} 
\left(\sigma_, \sigma'_k \right) \circ \left(p_{k,\tilde{{\rm \Delta }}\left(m,m+1\right)} \times 1_{\partial I^{2} } ,p_{k,\tilde{{\rm \Delta }}\left(m,m+1\right)}' \times 1_{\partial I^{2} } \; \right), 
\end{equation} 
\[\left(\tilde{{\rm \Delta }}_{m,m+1} ,\tilde{{\rm \Delta }}_{m,m+1}^{'} \right)\circ \left(p_{\tilde{{\rm \Delta }}\left(m,m+1\right)} \times 1_{I^{2} } ,p_{\tilde{{\rm \Delta }}\left(m,m+1\right)}' \times 1_{I^{2} } \right)\cong\]
\begin{equation} \label{GrindEQ__4_85_} 
\cong \left({\rm \Gamma }_{m,m+1} ,{\rm \Gamma }_{m,m+1}^{'} \right)\left(rel\left\{f\times 1_{\partial I^{2} } \right\}\right).\;  
\end{equation} 
Let ${\rm \Delta }\left(m+1\right)=\mathop{\max }\limits_{n\le m} \left\{\tilde{{\rm \Delta }}\left(n,n+1\right)\right\}$, then ${\rm \Delta }:\mathbb N\to \mathbb N$ will be an increasing function. Now define the morphisms
\begin{equation} \label{GrindEQ__4_86_} 
\left({\rm \Delta }_{m} ,{\rm \Delta }_{m}^{'} \right):f_{{\rm \Delta }(\left(m\right)} \times 1_{I} \to g_{m},                                            
\end{equation} 
\begin{equation} \label{GrindEQ__4_87_} 
\left({\rm \Delta }_{m,m+1} ,{\rm \Delta }_{m,m+1}^{'} \right):f_{{\rm \Delta }(\left(m\right)} \times 1_{I} \times 1_{I} \to g_{m},
\end{equation} 
by the following 
\begin{equation} \label{GrindEQ__4_88_} 
\left({\rm \Delta }_{m} ,{\rm \Delta }_{m}^{'} \right)=\left(\tilde{{\rm \Delta }}_{m} ,\tilde{{\rm \Delta }}_{m}^{'} \right)\circ \left(p_{\tilde{{\rm \Delta }}\left(m\right),{\rm \Delta }(\left(m\right)} \times 1_{I} ,p_{\tilde{{\rm \Delta }}\left(m\right),{\rm \Delta }(\left(m\right)}' \times 1_{I} \; \right), 
\end{equation} 
\[\left({\rm \Delta }_{m,m+1} ,{\rm \Delta }_{m,m+1}^{'} \right)=\]
\begin{equation} \label{GrindEQ__4_89_} 
\left(\tilde{{\rm \Delta }}_{m,m+1} ,\tilde{{\rm \Delta }}_{m,m+1}^{'} \right)\circ \left(p_{\tilde{{\rm \Delta }}\left(m+1\right),{\rm \Delta }(\left(m+1\right)} \times 1_{I^{2} } ,p_{\tilde{{\rm \Delta }}\left(m+1\right),{\rm \Delta }(\left(m+1\right)}' \times 1_{I^{2} } \; \right). 
\end{equation} 
In this case, we can show that $\boldsymbol{ \left(\Theta^1 ,\Theta^{'1}\right)}=\left\{ \left(\Theta^1_m ,\Theta^{'1}_m \right),  \left(\Theta^1_{m,m+1} ,\Theta^{'1}_{m,m+1}, \Theta \right)  \right\}$ is a coherent homotopy between the morphisms  $\boldsymbol{\left(\Psi _{1} ,\Psi _{1}^{'} \right)}$ and $\boldsymbol{\left( \Psi _{2} ,\Psi _{2}^{'} \right)}$. 
\end{proof}

By theorems 3.1 and 3.2, if $\mathbf{\left(p_{1} ,p'_{1} \right)}:f\to \mathbf{f_{1}} $ and $\mathbf{\left(p_{2} ,p'_{2} \right)}:f\to \mathbf{f_{2}}$\textbf{ }are two expansions of $f$, then there exists a unique isomorphism $\mathbf{\left(i,i'\right)}:\mathbf{f_{1}} \to \mathbf{f_{2}} $ such that 
\begin{equation} \label{GrindEQ__4_90_} 
\mathbf {\left[\left(i, i'\right)\right]} \circ \mathbf{\left(p_{1} ,p_{1}' \right)}=\mathbf{\left(p_{2} ,p_{2}' \right)} .    \end{equation} 
For each $f,g\in \mathbf{Mor_{CM}} $\textbf{ }consider the set of all triples 
\begin{equation} \label{GrindEQ__4_91_} 
\left(\mathbf{\left(p,{ p}'\right)},\mathbf{\left(q,{ q}'\right)},\left[\boldsymbol{\left(\Psi ,\Psi '\right)}\right]\right),                              
\end{equation} 
where $\mathbf{\left(p,p'\right)}:f\to \mathbf{f}$ and $\mathbf{\left(q,q'\right)}:g\to \mathbf{g}$ are strong fiber expansions and $\left[\boldsymbol{\left(\psi ,\psi '\right)}\right]$ is a coherent homotopy class of the coherent morphism $\boldsymbol{\left(\Psi ,\Psi '\right)}:\mathbf{f}\to \mathbf{g}$. Two such triples  $\left(\mathbf{\left(p_{1} ,{ p}_{1}' \right)},\mathbf{\left(q_{1} ,{\ q}_{1}' \right)},\left[\boldsymbol{\left(\Psi _{1} ,\Psi _{1}^{'} \right)}\right]\right)$ and $\left(\mathbf{\left(p_{2} ,{ p}_{2}' \right)},\mathbf{\left(q_{2} ,{\ q}_{2}' \right)},\left[\boldsymbol{\left(\Psi _{2} ,\Psi _{2}^{'} \right)}\right]\right)$ are called equivalent if 
\begin{equation} \label{GrindEQ__4_92_} 
\left[\mathbf{\left(j,{ j}'\right)}\right]\circ \left[\boldsymbol{\left(\Psi _{1} ,\Psi _{1}^{'} \right)}\right]=\left[\boldsymbol{\left(\Psi _{2} ,\Psi _{2}^{'} \right)}\right]\circ \left[\mathbf{\left(i,{ i}'\right)}\right] ,               
\end{equation} 
where $\left[\mathbf{\left(i,{ i}'\right)}\right]: \mathbf{f_{1}} \to \mathbf{f_{2}} $ and $\left[\mathbf{\left(j,{ j}'\right)}\right]; \mathbf{g_{1}} \to \mathbf{g_{2}}$ are isomorphisms. The equivalence class of the triple  $\left(\mathbf{\left(p, p'\right)},\mathbf{\left(q, q'\right)},\boldsymbol{\left[\left(\Psi ,\Psi '\right)\right]}\right)$ is denoted by $F:f\to g$ and is called a strong shape morphism from $f$ to $g$. Let the category $\mathbf{SSh\left(Mor_{CM} \right)}$ of all continuous maps of compact metric spaces and all strong shape morphisms be called the strong fiber shape category of  $\mathbf{Mor_{CM}} $.

$\mathbf{Remark:}$ Note that there exists a functorial relation between the strong fiber shape category and fiber shape category.

\end{document}